\def\nc{\newcommand}
\def\ep{\epsilon}
\def\bff{{\bf f}}
\def\hh{{\bf h}}
 \def\Om{\Omega}
\def\integral{\int}
\def\fintegral{\fint}
\def\mm{\mathcal{M}}
\def\aa{\mathcal{A}}
\def\ll{\mathcal{L}}
\def\ba{{ A}}
\def\pdl{p-\delta}
\nc\pa{\partial}
\def\pmd{p-\delta}
\def\ppd{p+\delta}
\nc\CC{\mathbb{C}}
\nc\RR{\mathbb{R}}
\nc\QQ{\mathbb{Q}}
\nc\ZZ{\mathbb{Z}}
\nc\NN{\mathbb{N}}
\def\bea{\begin{equation}\begin{aligned}}
\def\ena{\end{aligned}\end{equation}}
\def\beas{\begin{equation*}\begin{aligned}}
\def\enas{\end{aligned}\end{equation*}}
\nc\m[1]{\left| #1\right|}
\nc\norm[1]{\left\|#1\right\|}
\nc\axgrad[1]{\mathcal{A}(x, #1)}
\newtheorem{theorem}{Theorem}[section]
\newtheorem{lemma}[theorem]{Lemma}
\newtheorem{corollary}[theorem]{Corollary}
\newtheorem{proposition}[theorem]{Proposition}
\newtheorem{definition}[theorem]{Definition}
\newtheorem{remark}[theorem]{Remark}        
\numberwithin{equation}{section}
\def\omegarho{\Omega_{\rho}}
\def\omegatwo{\Omega_{2\rho}}
\def\btwo{B_{2\rho}}
\def\brho{B_{\rho}}
\def\pmin{p-\delta}
\def\wbar{|\nabla \bar{w}|}
\def\flam{F_{\lambda}}
\def\vlam{v_{\lambda}}
\def\tdelta{\tilde{\delta}}
\def\mbfh{{\bf \mathcal{H}}}
\begin{document}
\title[Global estimates below the natural exponent]
{Global Lorentz and Lorentz-Morrey estimates below the natural exponent 
for quasilinear equations}

\author[Karthik Adimurthi]
{Karthik Adimurthi}
\address{Department of Mathematics,
Louisiana State University,
303 Lockett Hall, Baton Rouge, LA 70803, USA.}
\email{kadimu1@math.lsu.edu}

\author[Nguyen Cong Phuc]
{Nguyen Cong Phuc}
\address{Department of Mathematics,
Louisiana State University,
303 Lockett Hall, Baton Rouge, LA 70803, USA.}
\email{pcnguyen@math.lsu.edu}

\begin{abstract}
Lorentz and Lorentz-Morrey estimates are obtained for  gradients of very weak solutions  to 
quasilinear equations of the form $$\text{div}\,\mathcal{A}(x, \nabla u)=\text{div}\, |{\bf f}|^{p-2}{\bf f},$$
where $\text{div}\,\mathcal{A}(x, \nabla u)$ is modelled after the $p$-Laplacian, $p>1$. 
The estimates are global over bounded domains that satisfy a
mild exterior uniform thickness condition that involves the $p$-capacity. The vector field datum ${\bf f}$ is allowed to have low
degrees of integrability and thus solutions may not have finite $L^p$ energy. A higher integrability result at the boundary of the ground
domain is also obtained for infinite energy solutions to the associated homogeneous equations. 
\end{abstract}


\maketitle

\section{Introduction}
We  address the question of global regularity of  {\it very weak   
solutions} to the nonhomogeneous nonlinear boundary value problems of the form 
\begin{equation}\label{basicpde}
\left\{ \begin{array}{rcl}
 \text{div}\,\aa(x, \nabla u)&=&\text{div}~|{\bf f}|^{p-2}{\bf f}  \quad \text{in} ~\Omega, \\
u&=&0  \quad \text{on}~ \partial \Omega
\end{array}\right. 
\end{equation} 
in a bounded domain $\Om\subset\RR^n$ potentially with a non-smooth boundary.

In \eqref{basicpde}, the operator $\text{div}\,\aa(x, \nabla u)$ is modelled after the $p$-Laplacian 
$\Delta_p u=\text{div}\, |\nabla u|^{p-2}\nabla u$, with $p\in(1,n]$. Our main goal in this paper is to find
 minimal conditions on the non-linearity $\aa$  and on the boundary of the domain
so that the gradient, $\nabla u$,  of a very weak solution to \eqref{basicpde} is as regular as 
the data ${\bf f}$. Here by very weak solutions we mean distributional solutions  that may not have finite $L^p$ energy. That is, solutions $u$ are required only that
$\aa(x, \nabla u)\in L^{1}(\Om)$ with a certain zero boundary condition such that 
$$\int_{\Om}\aa(x, \nabla u)\cdot \nabla \varphi dx=\int_{\Om}|{\bf f}|^{p-2}{\bf f}\cdot \nabla \varphi dx$$ for all test functions $\varphi\in C_0^{\infty}(\Om)$.

In particular, we will give various function spaces $\mathcal{S}$ such that 
${\bf f} \in \mathcal{S}$ implies $\nabla u \in \mathcal{S}.$ The function spaces we will present include
the standard Lebesgue spaces, Lorentz spaces, and Lorentz-Morrey spaces that are based on $L^q$ spaces for $q$
in a neighborhood of $p$, i.e., $q$ is allowed to lie below the natural exponent $p$.

More specifically, the non-linearity   
$\aa: \RR^n\times\RR^n \rightarrow \RR^n$ is a Carath\'edory vector valued function, i.e., $\aa(x,\xi)$ is measurable in $x$ for every $\xi$ and continuous in 
$\xi$ for a.e. $x$. We always assume that $\aa(x,0)=0$ for a.e. $x\in \RR^n$. For our purpose, we also require that
 $\aa$ satisfy the following monotonicity and H\"older type conditions:  for some $1<p\leq n$ and $\gamma\in(0,1)$ there hold
\begin{equation}\label{monotone}
  \langle\aa(x,\xi)-\aa(x,\zeta),\xi-\zeta \rangle\geq
 \Lambda_0 (|\xi|^2+|\zeta|^2)^{\frac{p-2}{2}}|\xi-\zeta|^2 
\end{equation}
and 
\begin{align}
\label{ellipticity}
|\aa(x,\xi) - \aa(x,\zeta)| \leq \Lambda_1 |\xi - \zeta|^{\gamma} (|\xi|^2+|\zeta|^2)^{\frac{p-1 - \gamma}{2}}
\end{align}
for every $(\xi,\zeta)\in\RR^n \times\RR^n\setminus\{(0,0)\}$ and a.e. $x \in \RR^n$. Here $\Lambda_0$ and $\Lambda_1$ are  positive constants. 
Note that \eqref{ellipticity} and the assumption $\aa(x,0)=0$ for a.e. $x\in \RR^n$ imply the following  condition 
\bea\label{ellipticity-full}
|\aa(x,\xi)| \leq\Lambda_1\m{\xi}^{p-1}.
\ena
Moreover,  assumption \eqref{ellipticity} for the structure of the nonlinearity is weaker than that considered in the earlier work
\cite{IW}, in which a Lipschitz type condition, i.e., 
$\gamma=1$, was used.

With regard to the domain $\Om$, in this paper we shall assume that $\Om$ is a bounded domain whose complement  $\Om^c:=\RR^n\setminus \Om$
uniformly thick with respect to the $p$-capacity.
 Let $1<p\leq n$ and $O \subset \RR^n$ be an open set. Recall that  for a compact set $K \Subset  O$, the  $p$-capacity of $K$ is 
defined by
\beas
	{\rm cap}_{p} (K,O) := \inf  \left\{  \integral_{O} |\nabla u|^p  \, dx : 0\leq u \in C_c^{\infty} (O), u \geq 1 \,  \textrm{on} \,  K \right \}.
\enas

It is easy to see that for $1<p\leq n$, there holds 
$${\rm cap}_{p} (\overline{B_{r}(x)}, B_{2r}(x))=c\, r^{n-p},$$
where $c=c(n,p)>0$  (see \cite[Chapter 2]{HKM}). Henceforth, the 
notation $B_r(x)$  denotes the Euclidean ball centered at $x$ with radius $r>0$, and $\overline{B_{r}(x)}$ is its closure.

\begin{definition}[Uniform $p$-thickness] 
Let $\Om\subset\RR^n$ be a bounded domain. We say that the complement $\Om^c:=\RR^n\setminus \Om$ is 
uniformly $p$-thick for some $1< p \leq n$ with constants  $r_0, b>0$, if  the inequality  
$$ {\rm cap}_{p} ( \overline{B_r(x)} \cap \Om^c, B_{2r}(x)) \geq  b\, {\rm cap}_{p} (\overline{B_{r}(x)}, B_{2r}(x))$$ 
holds for any $x \in \partial \Om$ and $r\in(0, r_0]$. 
\end{definition}

It is well-known that the class of  domains with uniform $p$-thick complements
is very large. They include all domains with Lipschitz boundaries or even those that satisfy a uniform exterior corkscrew condition, where the latter means that there exist constants $c_0, r_0>0$ such that for all $0<t\leq r_0$ and all $x\in \RR^n\setminus \Om$, there is $y\in B_t(x)$
such that $B_{t/c_0}(y)\subset \RR^n\setminus \Om$.

We now  recall the definition of   Lorentz and Lorentz-Morrey spaces. The Lorentz space
$L(s, t)(\Om)$, with $0< s<\infty$, $0<t\leq\infty$, is the set of
measurable functions $g$ on $\Omega$ such that
\[
\|g\|_{L(s, t)(\Omega)} := \left[s \integral_{0}^{\infty}\alpha^t |\{x\in\Om: |g(x)|>\alpha\}|^{\frac{t}{s}}\,  \frac{d\alpha}
{\alpha}\right]^{\frac{1}{t}} <+ \infty
\]
when $t\not=\infty$; for $t=\infty$ the space $L(s,\infty)(\Om)$ is set to be the usual Marcinkiewicz space with quasinorm
$$\|g\|_{L(s, \infty)(\Omega)}:=\sup_{\alpha >0} \alpha |\{x\in \Om: |g(x)|>\alpha\}|^{\frac{1}{s}}.$$
It is easy to see that when $t=s$ the Lorentz space
$L(s,s)(\Om)$ is nothing but the Lebesgue space $L^{s}(\Om)$, which is equivalently defined as 
\[
g\in L^{s}(\Omega) \Longleftrightarrow \integral_{\Omega}|g(x)|^s \, dx < \infty.
\] 

 A function $g\in L(s,t)(\Om)$, $0<s<\infty$, $0<t\leq \infty$
is said to belong to the Lorentz-Morrey function space $\mathcal{L}^{\theta}(s,t)(\Om)$ for some $0<\theta\leq n$, if
\begin{equation*}
\norm{g}_{\mathcal{L}^{\theta}(s,t)(\Om)} := \sup_{ \substack{0<r \leq {\rm diam} (\Om), \\ z\in\Om   }}
r^{\frac{\theta-n}{s}}\norm{g}_{L(s,t)(B_{r}(z)\cap\Om)}  <+\infty.
\end{equation*}

When $\theta=n$, we have $\mathcal{L}^{\theta}(s,t)(\Om)=  L(s,t)(\Om)$. Moreover, when $s=t$ the space  $\mathcal{L}^{\theta}(s,t)(\Om)$ becomes the usual Morrey space based on $L^s$ space. 

A basic use of Lorentz spaces is to improve the classical Sobolev Embedding Theorem. For example, if $f\in W^{1,q}$ for some 
$q\in (1,n)$  then $$f\in L(nq/(n-q), q)$$ (see, e.g., \cite{Zie}), which is better than the classical result  
$$f\in L^{nq/(n-q)}=L(nq/(n-q), nq/(n-q))$$
since $L(s,t_1)\subset L(s, t_2)$ whenever $t_1\leq t_2$.  Another use of Lorentz spaces is to capture logarithmic singularities. For example,  for any $\beta>0$ we have 
$$\frac{1}{|x|^{n/s} (-\log|x|)^\beta}\in L(s,t)(B_1(0)) \quad {\rm if~and~only~if~} t>\frac{1}{\beta}.$$ 
Lorentz spaces have also been used successfully  in improving regularity criteria for the full
3D Navier-Stokes system of equations (see, e.g., \cite{Soh}). 

On the other hand, Lorentz-Morrey spaces are neither rearrangement invariant spaces, nor interpolation spaces. 
They often show up in the analysis of  Schr\"odinger operators (see \cite{Fef}) or in the regularity theory of 
nonlinear equations of fluid dynamics. Moreover, estimates in Morrey spaces have been used as an 
indispensable tool in the recent papers \cite{MP14, Ph} to obtain sharp existence results for a quasilinear Riccati type equation.
In fact, that is one of the main motivations in obtaining  bounds in Lorentz-Morrey spaces in this  paper.

We are now ready to state the first main result of the paper.

\begin{theorem}\label{main}
Let $\aa$ satisfy \eqref{monotone}-\eqref{ellipticity}, and let 
$\Omega$ be a bounded domain whose complement  uniformly $p$-thick  with constants $r_0, b > 0$.
Then there exists a small $\delta=\delta(n, p, \Lambda_0, \Lambda_1,  \gamma, b)>0$  such that for 
any very weak solution $u\in W^{1,p-2\delta}_0(\Om)$ to the boundary value problem  \eqref{basicpde} there holds
\begin{equation}\label{LMB}
\norm{\nabla u}_{\ll^{\theta}(q, \, t)(\Omega)} \leq C \norm{{\bf f}}_{\ll^{\theta}(q, \, t)(\Omega)}
\end{equation}
for all   $q\in [p-\delta,  p+\delta]$, $0<t\leq \infty$, and $\theta \in [p-2\delta,  n]$. 
Here   the constant  $C = C(n,p, t, \Lambda_0, \Lambda_1, \gamma, b,{\rm diam}(\Omega)/r_0)$. 
\end{theorem}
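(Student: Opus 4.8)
The plan is to run the Calder\'on--Zygmund covering scheme for quasilinear equations, but carried out systematically at the sub-natural integrability level $p-2\delta$ --- the extra room of size $\delta$ is exactly what lets the maximal-function arguments close. First I would reduce \eqref{LMB} to an a priori estimate: using the strict monotonicity \eqref{monotone} one shows, for $\delta$ small, that the very weak solution $u\in W^{1,p-2\delta}_0(\Om)$ of \eqref{basicpde} is unique and equals the limit in $W^{1,p-2\delta}_0(\Om)$ of finite-energy solutions $u_k\in W^{1,p}_0(\Om)$ with smooth data $\bff_k\to\bff$, so it suffices to establish \eqref{LMB} for the $u_k$ with constant independent of $k$. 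Then, fixing a ball $B_{2\rho}=B_{2\rho}(z)$ with $z\in\overline{\Om}$, let $w$ solve the homogeneous problem $\text{div}\,\axgrad{\nabla w}=0$ in $B_{2\rho}\cap\Om$ with $w=u$ on $\partial(B_{2\rho}\cap\Om)$, so that $w$ also vanishes on $\partial\Om\cap B_{2\rho}$ when $z\in\partial\Om$. Subtracting the two equations and testing with a truncation of $u-w$ --- the device that replaces the inadmissible choice $\varphi=u-w$ when $u$ lives below the natural exponent --- and invoking \eqref{monotone}, \eqref{ellipticity}, one obtains a comparison bound, schematically
\[
\Big(\fint_{B_{2\rho}\cap\Om}|\nabla u-\nabla w|^{p-2\delta}\,dx\Big)^{\frac{1}{p-2\delta}}\le C\,\Big(\fint_{B_{2\rho}\cap\Om}|\bff|^{p-2\delta}\,dx\Big)^{\frac{1}{p-2\delta}}+(\text{absorbable tail terms}),
\]
with a structurally similar interpolated inequality in the singular range $1<p<2$.

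\emph{Reference estimates.} On an interior ball $B_{2\rho}\subset\Om$ the classical gradient regularity for \eqref{monotone}--\eqref{ellipticity-full} (DiBenedetto, Tolksdorf, Lieberman) yields the Lipschitz bound $\sup_{B_{\rho}}|\nabla w|^{p-2\delta}\le C\fint_{B_{2\rho}}|\nabla w|^{p-2\delta}\,dx$. On a boundary ball ($z\in\partial\Om$) the solution $w$ need not be $C^1$ up to $\partial\Om$, but the uniform $p$-thickness of $\Om^c$ supplies a capacitary Sobolev--Poincar\'e inequality for functions vanishing on a uniformly thick set, and, fed into a Gehring-type argument, this yields a reverse H\"older inequality
\[
\Big(\fint_{B_{\rho}\cap\Om}|\nabla w|^{(p-2\delta)(1+\sigma_0)}\,dx\Big)^{\frac{1}{1+\sigma_0}}\le C\,\fint_{B_{2\rho}\cap\Om}|\nabla w|^{p-2\delta}\,dx
\]
for some $\sigma_0=\sigma_0(n,p,\Lam_0,\Lam_1,\ga,b)>0$. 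The key point is that the reference solutions $w$ inherited from $u$ only have $\nabla w\in L^{p-2\delta}$, so this reverse H\"older must be proven for \emph{infinite-energy} homogeneous solutions --- this is exactly the boundary higher-integrability statement advertised in the abstract --- which is done by testing the homogeneous equation with truncations of $w$ and using the capacity-density estimate to bootstrap $\nabla w$ from $L^{p-2\delta}$ up past $L^{p}$.

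\emph{Good-$\lambda$ estimate and conclusion.} Combining the two previous steps, for a suitably restricted Hardy--Littlewood maximal function $\mm=\mm_{R_0}$ one proves a density estimate: there is $N>1$ such that for every $\eta\in(0,1)$ there is $\ve=\ve(\eta)>0$ with
\[
|\{\mm(|\nabla u|^{p-2\delta})>N^{p-2\delta}\lam\}\cap B_r(z)|<\eta\,|B_r(z)|
\]
for all $\lam>0$ and all balls $B_r(z)$, $r\le R_0$, that meet $\{\mm(|\nabla u|^{p-2\delta})\le\lam\}\cap\{\mm(|\bff|^{p-2\delta})\le\ve^{p-2\delta}\lam\}$. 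A Vitali/Krylov--Safonov covering argument then upgrades this to the localized level-set bound
\[
|\{\mm(|\nabla u|^{p-2\delta})>N^{p-2\delta}\lam\}\cap B_R(z_0)|\le C\eta\,|\{\mm(|\nabla u|^{p-2\delta})>\lam\}\cap B_{2R}(z_0)|+|\{\mm(|\bff|^{p-2\delta})>\ve^{p-2\delta}\lam\}\cap B_{2R}(z_0)|.
\]
Multiplying by suitable powers of $\lam$, integrating in $\lam$, choosing $\eta$ small to absorb the first term --- legitimate since the relevant quantity is a priori finite for the finite-energy approximations $u_k$ once $\delta$ is taken below the Gehring self-improvement exponent --- using that $\mm$ is bounded on the Lorentz space $L(q/(p-2\delta),t/(p-2\delta))$ because $q/(p-2\delta)\ge(p-\delta)/(p-2\delta)>1$, and finally taking the supremum of the localized estimates over $z_0\in\Om$ and $0<r\le{\rm diam}(\Om)$ weighted by $r^{(\theta-n)/q}$ together with the corresponding mapping property of $\mm$ on Lorentz--Morrey spaces, one arrives at \eqref{LMB}. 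The ranges $q\in[p-\delta,p+\delta]$ and $\theta\in[p-2\delta,n]$ are precisely those for which the comparison estimates and these maximal-function bounds simultaneously close.

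\emph{Main obstacle.} The hard part is the boundary reference estimate for infinite-energy solutions: obtaining a reverse H\"older inequality up to $\partial\Om$ for $w$ with only $\nabla w\in L^{p-2\delta}$. One cannot test with $w$ itself, so the self-improvement has to be driven through truncations, and the uniform $p$-thickness of $\Om^c$ must be converted --- via a capacitary Sobolev--Poincar\'e inequality of Maz'ya/Kilpel\"ainen--Koskela type --- into an honest gain of integrability. Keeping every constant uniform in the radius (so that $\Om$ enters only through ${\rm diam}(\Om)/r_0$) and dovetailing the small parameters $\delta$, $\sigma_0$, $\eta$, $\ve$ so that the entire chain indeed closes is the remaining technical burden.
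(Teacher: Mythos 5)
Your overall framework is close to the paper's: use a Lipschitz truncation of $u-w$ as a test function to derive a comparison estimate below the natural exponent, combine this with a higher-integrability reference estimate for the homogeneous comparison problem (including at the boundary, for \emph{infinite-energy} solutions, via the capacitary Sobolev--Poincar\'e inequality of Maz'ya / Kilpel\"ainen--Koskela under the uniform $p$-thickness hypothesis), and then close via a good-$\lambda$ / Krylov--Safonov covering argument. The paper in fact uses the Iwaniec--Sbordone nonlinear Hodge decomposition in the interior and the Lipschitz truncation at the boundary, but explicitly remarks that the truncation method would also work in the interior, so your uniform use of truncations is consistent with the paper's scheme. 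However, two steps in your proposal are genuinely wrong, and a third is at least unjustified.

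\emph{Uniqueness and approximation.} You begin by asserting that strict monotonicity \eqref{monotone} gives uniqueness of very weak solutions in $W^{1,p-2\delta}_0(\Om)$ for small $\delta$, and hence that one may reduce to finite-energy approximations $u_k \in W^{1,p}_0(\Om)$. This is not available: testing the difference of two very weak solutions against itself is not admissible at the $L^{p-2\delta}$ level, and the monotonicity argument collapses. The paper is explicit that uniqueness of the comparison problem remains unknown when $\delta>0$ (see the discussion around \eqref{firstapprox} and Remark~\ref{delta=0}), and Theorem~\ref{main} is deliberately stated for \emph{any} very weak solution. The actual proof must therefore work directly with the given very weak solution, as the paper does; the approximation route you propose is a gap.

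\emph{Interior reference estimate.} You invoke a Lipschitz bound $\sup_{B_\rho}|\nabla w|\lesssim (\fint_{B_{2\rho}}|\nabla w|^{p-2\delta})^{1/(p-2\delta)}$ for the homogeneous comparison solution on interior balls, citing DiBenedetto/Tolksdorf/Lieberman. Under \eqref{monotone}--\eqref{ellipticity-full} the nonlinearity $\aa(x,\xi)$ is only Carath\'eodory --- merely measurable in $x$, with no continuity or Dini modulus --- so $C^{1,\alpha}$ theory does not apply and $\nabla w$ need not be locally bounded. All one has is a Gehring-type reverse H\"older inequality (Theorem~\ref{regularity}: $\nabla w\in L^{p+\tilde\delta_2}_{\rm loc}$ from $\nabla w\in L^{p-\tilde\delta_2}_{\rm loc}$). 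The fix is structural rather than conceptual: the interior case of the good-$\lambda$ estimate must be driven by the same kind of reverse H\"older gain you already use at the boundary, exactly as in Proposition~\ref{first-approx-lorentz}.

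\emph{Morrey endpoint.} Your last step passes from the good-$\lambda$ estimate to the Lorentz--Morrey bound \eqref{LMB} by appealing to a mapping property of the maximal function on Lorentz--Morrey spaces. This is not how the paper proceeds, and it is not clear that it closes: the covering argument yields a \emph{localized} Lorentz bound on a ball $B_0$ in terms of data on $4B_0$, and converting that into the decay $r^{(\theta-n)/q}$ across all scales requires an additional iteration. The paper instead proves a Campanato-type decay for $\rho\mapsto\int_{B_\rho}|\nabla u|^{p-\delta}$ by combining the comparison estimates (Lemmas~\ref{DM} and~\ref{DMboundary}) with the H\"older decay of homogeneous solutions (Lemmas~\ref{holderint-nablaw} and~\ref{holderbdry-nabla}), then applies the standard iteration lemma of Han--Lin and interpolates back into Lorentz. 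If you wish to keep your route you must at least supply and justify the Lorentz--Morrey boundedness of the localized maximal operator at the relevant scaling; otherwise this step of your argument is incomplete.
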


In the simplest case where $\theta=n$ and $t=q$, Theorem  \ref{main} yields the following basic Calder\'on-Zygmund type estimate for solutions of 
\eqref{basicpde}:
\begin{equation}\label{CZI}
\norm{\nabla u}_{L^q(\Omega)} \leq C \norm{{\bf f}}_{L^q(\Omega)}
\end{equation}
for  all   $q\in [p-\delta,  p+\delta]$, provided $\RR^n\setminus\Om$ is uniformly $p$-thick.  We observe that inequality \eqref{CZI} has been obtained in \cite{IW} under stronger conditions on $\mathcal{A}$
and $\Om$. Namely, on the one hand, a Lipschitz type condition, i.e., $\gamma=1$ in \eqref{ellipticity}, was assumed in \cite{IW}. On the other hand,
the domain $\Om$ considered \cite{IW} was assumed to be regular in the  sense that the Calder\'on-Zygmund type bound
\begin{equation}\label{CZlinear}
\norm{\nabla v}_{L^r(\Omega)} \leq C \norm{{\bf f}}_{L^r(\Omega)} 
\end{equation}
holds for {\it all} $r\in(1,\infty)$ and all solutions to the linear equation
\begin{equation}\label{linear-v}
\left\{ \begin{array}{rcl}
 \Delta v&=&\text{div}\, {\bf f}  \quad \text{in} ~\Omega, \\
v&=&0  \quad \text{on}~ \partial \Omega.
\end{array}\right. 
\end{equation} 

As demonstrated by a counterexample  in \cite{MP11} (see also \cite{JK}), estimate \eqref{CZlinear}, say for large $r$, generally fails for solutions of \eqref{linear-v} even for (non-convex) Lipschitz domains.  
Thus the result of \cite{IW} concerning  the bound \eqref{CZI} does not cover all Lipschitz domains. In this respect, the bound \eqref{CZI} for domains with 
thick complements is  new, and in fact it is new even for linear equations,  where the principal operator is replaced by just  the standard Laplacian $\Delta$.

Another new  aspect of this paper is the following {\it boundary} higher integrability result for very weak solutions to the associated homogeneous equations.

\begin{theorem}\label{higher-quali} Suppose that  $\aa$ satisfies \eqref{monotone} and \eqref{ellipticity-full}, and that 
$\Om$ is a bounded domain whose complement  uniformly $p$-thick  with constants $r_0, b > 0$. Then  there exists a positive number 
$\overline{\delta}=\overline{\delta}(n, p, \Lambda_0, \Lambda_1,  b)$ such that the following holds. For any $x_0 \in \partial \Omega$ and  
 $ R\in (0,r_0/2)$, if $w\in W^{1,p-\overline{\delta}}(\Omega\cap B_{2R}(x_0))$ is a very weak solution   to the  Dirichlet problem
\begin{equation*} 
\left\{ \begin{array}{rcl}
{\rm div} \aa(x, \nabla w)&=& 0  \quad \text{in} ~\Omega\cap B_{2R}(x_0), \\
w&=&0  \quad \text{on}~ \partial \Omega \cap B_{2R}(x_0),
\end{array}\right.
\end{equation*}
then there holds $w\in W^{1,p+\overline{\delta}}(\Omega\cap B_{R}(x_0))$.
\end{theorem}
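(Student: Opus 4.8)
The plan is to reduce the statement, through a zero-extension and a Gehring-type self-improvement, to a single \emph{sub-natural reverse H\"older inequality} for $|\nabla w|$. Write $\sigma := p - \overline{\delta}$ and let $\bar w$ be the extension of $w$ by zero to $B_{2R}(x_0)$. By the meaning of the boundary condition $w = 0$ on $\partial\Omega\cap B_{2R}(x_0)$ one has $\bar w \in W^{1,\sigma}(B_{2R}(x_0))$, $\bar w = 0$ quasi-everywhere on $B_{2R}(x_0)\setminus\Omega$, and $\aa(x,\nabla\bar w) = \chi_{\Omega}\,\aa(x,\nabla w)$ since $\aa(x,0)=0$; hence $\int_{B_{2R}(x_0)} \aa(x,\nabla\bar w)\cdot\nabla\varphi\,dx = 0$ for every Lipschitz $\varphi$ with compact support in $B_{2R}(x_0)$ that vanishes q.e. on $B_{2R}(x_0)\setminus\Omega$ (the passage from $C_c^\infty(\Omega\cap B_{2R}(x_0))$ test functions to this class uses only that $\aa(x,\nabla w)\in L^{\sigma/(p-1)}(\Omega\cap B_{2R}(x_0))$ with $\sigma/(p-1)>1$, which holds because $\overline{\delta}<1$). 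The target is
\begin{equation*}
\left(\fint_{B_{\rho}(z)}|\nabla\bar w|^{\sigma}\,dx\right)^{1/\sigma} \le C\left(\fint_{B_{2\rho}(z)}|\nabla\bar w|^{\sigma_0}\,dx\right)^{1/\sigma_0}
\end{equation*}
for some fixed $\sigma_0\in(0,\sigma)$, all balls with $B_{2\rho}(z)\subset B_{2R}(x_0)$, and $C = C(n,p,\Lambda_0,\Lambda_1,b)$ bounded as $\overline{\delta}\to 0^+$. Granting it, Gehring's lemma (in the version with a smaller exponent on the right) upgrades $|\nabla\bar w|$ to $L^{\sigma + \sigma_0\varepsilon_0}_{\rm loc}(B_{2R}(x_0))$ with $\varepsilon_0 = \varepsilon_0(n,p,\Lambda_0,\Lambda_1,b) > 0$, and taking $\overline{\delta}$ so small that $\sigma_0\varepsilon_0 > 2\overline{\delta}$ gives $\nabla\bar w\in L^{p+\overline{\delta}}(B_R(x_0))$, i.e. $w\in W^{1,p+\overline{\delta}}(\Omega\cap B_R(x_0))$.

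The sub-natural reverse H\"older inequality is obtained by a Lipschitz-truncation / Calder\'on--Zygmund stopping-time argument adapted to the boundary. Fix $B:=B_{2\rho}(z)\subset B_{2R}(x_0)$ and $\eta\in C_c^\infty(B_{2\rho}(z))$ with $\eta\equiv 1$ on $B_{\rho}(z)$, $|\nabla\eta|\le c/\rho$; set $v := \bar w\,\eta$. For every level $\lambda>\lambda_0$, where $\lambda_0$ is a fixed multiple of $\left(\fint_B|\nabla v|^{\sigma}\,dx\right)^{1/\sigma}$, put $U_\lambda := \{x : \mm(|\nabla v|\chi_B)(x) > \lambda\}$ and let $v_\lambda$ be a Lipschitz truncation of $v$ enjoying: (i) $\|\nabla v_\lambda\|_{L^\infty}\le c\lambda$; (ii) $\{v_\lambda\neq v\}\subset U_\lambda$ (so $\nabla v_\lambda=\nabla v$ a.e. on $B\setminus U_\lambda$) and $|U_\lambda|\le c\lambda^{-\sigma}\int_B|\nabla v|^{\sigma}\,dx$ (weak type of $\mm$ applied to $|\nabla v|^{\sigma}\in L^1$, using $\sigma>1$); and (iii) $v_\lambda=0$ q.e. on $B\setminus\Omega$, so that $v_\lambda$ is an admissible test function after zero-extension. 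Property (iii) is where uniform $p$-thickness enters: one assembles the truncation from a Whitney decomposition of $U_\lambda$ with local averages anchored in the good set $\{\mm(|\nabla v|\chi_B)\le\lambda\}$, and near $\partial\Omega$ those anchors can be chosen inside $\Omega^c$ because $\Omega^c\cap B_s(\xi)$ carries a definite share of the $s$-capacity of $B_s(\xi)$ for all $\xi\in\partial\Omega$, $0<s\le r_0$; here one also invokes the self-improvement of thickness (uniform $p$-thickness implies uniform $\sigma$-thickness for $\sigma$ near $p$), so that these capacitary bounds are available at the exponent $\sigma$.

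Testing $\int_{B_{2R}(x_0)}\aa(x,\nabla\bar w)\cdot\nabla v_\lambda\,dx=0$ and splitting into $B\setminus U_\lambda$ and $U_\lambda$ gives
\begin{equation*}
0 = \int_{B\setminus U_\lambda}\aa(x,\nabla\bar w)\cdot\bigl(\eta\nabla\bar w + \bar w\nabla\eta\bigr)\,dx + \int_{U_\lambda}\aa(x,\nabla\bar w)\cdot\nabla v_\lambda\,dx.
\end{equation*}
I would use the monotonicity \eqref{monotone} with $\zeta=0$, i.e. $\langle\aa(x,\xi),\xi\rangle\ge\Lambda_0|\xi|^{p}$, to extract the main term $\Lambda_0\int_{(B\setminus U_\lambda)\cap B_\rho(z)}|\nabla\bar w|^{p}\,dx$, and the growth bound \eqref{ellipticity-full}, $|\aa(x,\xi)|\le\Lambda_1|\xi|^{p-1}$, together with (i)--(ii) and the boundary Poincar\'e inequality $\rho^{-\sigma}\fint_B|\bar w|^{\sigma}\,dx\le C\fint_B|\nabla\bar w|^{\sigma}\,dx$ (valid because $\bar w$ vanishes q.e. on the uniformly $\sigma$-thick set $\Omega^c\cap B$) to control the two error integrals. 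Multiplying the resulting level-set inequality by an appropriate power of $\lambda$ and integrating over $(\lambda_0,\infty)$ (Fubini on super-level sets, as in the Iwaniec--Sbordone higher-integrability scheme for very weak solutions) yields the sub-natural reverse H\"older inequality; for interior balls $B_{2\rho}(z)\subset\Omega$ this is the classical interior version of that scheme, and for $B_{2\rho}(z)\subset\Omega^c$ it is trivial since $\bar w\equiv 0$ there.

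The main obstacle is the boundary-adapted Lipschitz truncation: properties (i), (ii), (iii) must hold \emph{simultaneously} and with constants independent of $\overline{\delta}$, since the scheme closes only because the Gehring exponent $\varepsilon_0$ — dictated by the reverse-H\"older constant $C$ — stays bounded below uniformly in $\overline{\delta}$, which is what makes the final overshoot $\sigma+\sigma_0\varepsilon_0>p+\overline{\delta}$ possible. The two capacitary inputs used along the way — stability of uniform $p$-thickness under small perturbations of the exponent, and the weak Poincar\'e inequality on balls meeting $\partial\Omega$ for functions vanishing on a thick set — are classical; everything else (the energy computation, the Fubini step, Gehring) is routine.
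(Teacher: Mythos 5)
Your overall strategy matches the paper's: extend $w$ by zero, localize with a cutoff, run a level-set/Lipschitz-truncation argument in the spirit of Iwaniec--Sbordone/Lewis for very weak solutions, and close with Gehring's lemma. But there is a genuine gap at the very step you yourself flag as ``the main obstacle'': the construction of the boundary-adapted Lipschitz truncation satisfying (i), (ii), and (iii) simultaneously. The Whitney-decomposition approach you sketch, with ``local averages anchored in the good set'' and ``anchors chosen inside $\Omega^c$ near $\partial\Omega$,'' is not how the paper does it, and as stated it does not obviously close: choosing an anchor in $\Omega^c$ (where the function is zero) forces $v_\lambda$ to be small on the associated Whitney cube, but you still have to glue these local pieces to the interior pieces (anchored where $\mm(|\nabla v|\chi_B)\le\lambda$) while keeping a global $c\lambda$-Lipschitz bound and exact vanishing on $\Omega^c$, and nothing in the sketch controls the jump across the gluing region. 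The $p$-thickness alone does not repair this; one needs a \emph{pointwise} quantitative link between $|v(x)|$ and the maximal function of $|\nabla v|$.

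The paper's mechanism, which your proposal does not invoke, is the pointwise Hardy inequality at a sub-natural exponent (Lemma~\ref{Apqg}, using \cite{PH} and Lewis's self-improvement of thickness \cite{Le88}): for $q\in(p-\delta_0,p-2\delta]$ one has $|v(x)|/d(x,\partial\tilde\Omega)\lesssim \mm(|\nabla v|^q)^{1/q}(x)$ a.e.\ in $\tilde\Omega$. This is decisive for two reasons. First, one may then define the good set as $F_\lambda=\{g\le\lambda\}$ with $g=\max\{\mm(|\nabla v|^q)^{1/q},|v|/d(\cdot,\partial\tilde\Omega)\}\simeq\mm(|\nabla v|^q)^{1/q}$; the restriction of $v$ to $(F_\lambda\setminus N)\cup\tilde\Omega^c$ is then directly $c\lambda$-Lipschitz via Hedberg's inequality \emph{and} the distance bound (which handles the case $x\in F_\lambda$, $y\in\tilde\Omega^c$), so a single Kirszbraun--McShane extension produces $v_\lambda$ with all of (i)--(iii) at once, with constants depending only on $n,p,b$. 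Second, taking $q$ strictly below $p-\delta$ puts $|\nabla v|^q$ in $L^{(p-\delta)/q}$ with exponent $>1$ (strictly), which makes $\mm(|\nabla v|^q)^\beta$ an $A_1$ weight for small $\beta$ and hence $g^{-\delta}$ an $A_{p/q}$ weight with constant depending only on $n,p,b$. This $A_{p/q}$ property is what converts the main term $\int g^{-\delta}|\nabla w|^p$ arising from the $\lambda$-integration into $\int|\nabla w|^{p-\delta}$ (via a reverse H\"older for the weight, or equivalently via \eqref{new-1}), a conversion your proposal does not address: working with $\mm(|\nabla v|\chi_B)$ at exponent $1$, as you propose, gives you weak-type bounds on $|U_\lambda|$ but not the weight structure needed to close the energy estimate at exponent $p-\delta$. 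So the two missing ideas are exactly the ones the paper introduces: (a) the pointwise Hardy inequality at a sub-$p$ exponent $q$, built into the definition of the good set so the truncation vanishes on $\Omega^c$ for free; and (b) the $A_{p/q}$-weight property of $g^{-\delta}$ coming from $q<p-\delta$, which is the engine that turns the tested identity into a reverse H\"older inequality below the natural exponent.
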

 
A quantitative statement of Theorem \ref{higher-quali} can be found in Theorem \ref{higher-integrability-boundary} below. We notice that whereas {\it interior} higher integrability of very weak solutions to
the equation  ${\rm div} \aa(x, \nabla w)= 0$ is well-known (see \cite{IW, John}), the boundary  higher integrability result has been obtained only for {\it finite energy solutions} 
$w\in W^{1,p}(\Omega\cap B_{2R}(x_0))$ in the paper \cite{KK} (see also \cite{Mik}). The fact that $|\nabla w|$ is allowed to be in
$L^{p-\overline{\delta}}$ to begin with plays a crucial role in our proof of Theorem \ref{main} above.

\begin{remark}{\rm
The H\"older type condition \eqref{ellipticity} with $\gamma\in (0,1)$ is not needed in Theorem \ref{higher-quali}, while 
this condition is assumed in Theorem \ref{main}. As a matter of fact, the proof of Theorem   \ref{main} requires \eqref{ellipticity} only 
through the use of Corollaries   \ref{iwaniec-exist} and \ref{exist-thick}. Thus by Remark \ref{delta=0} below,  making use of only \eqref{monotone}, \eqref{ellipticity-full}
and the $p$-thickness condition as in Theorem \ref{main}, we still obtain inequality \eqref{LMB}  with a constant 
$$C = C(n,p, q, t, \Lambda_0, \Lambda_1,  b,{\rm diam}(\Omega)/r_0)$$
for any  finite energy solution $u\in W^{1,p}_0(\Om)$ provided
$q\in (p, p+\delta]$. 
}\end{remark}

There are numerous papers devoted to the $L^q$ bound \eqref{CZI}  for solutions of \eqref{basicpde}
in the {\it super-natural range} $q>p$. The pioneer work \cite{Iw} dealt with the case $\Om=\RR^n$, 
and the paper \cite{KZ1} obtained a local interior bound.
In \cite{CP}, using a perturbation technique
developed for fully nonlinear PDEs \cite{CC}, the authors proved  certain local
$W^{1, q}$ regularity for an associated homogeneous quasilinear equation.  Global estimates upto the boundary of a bounded domains were obtained in \cite{KZ2} (for $C^{1,\alpha}$ 
domains) and in \cite{BW3, BWZ} (for Lipschitz domains with small Lipschitz constants or  for  domains that are sufficiently flat in the sense 
of Reifenberg). Global weighted analogues  of those results that can be used to deduce the associated
Morrey type bounds can be found in \cite{MP12, MP14, Ph11}. We notice that, due to the lack of duality, the results in those papers, which treat only the case $q>p$, could not be apply to the case $q\leq p$ even for good domains and for nonlinear operators with 
continuous coefficients. In fact, even the basic Calder\'on-Zygmund type bound \eqref{CZI} for {\it all} $p-1<q<p$ 
has been a long standing open problem known as a  conjecture of T. Iwaniec (see \cite{Iw, Min08}). On the other hand, we remark that if the divergence form 
datum ${\rm div~}|{\bf f}|^{p-2} {\bf f}$ on the right-hand side of \eqref{basicpde} is replaced by a finite measure $\mu$ then gradient estimnates below the natural exponent $p$ can be obtained as demonstrated, e.g., in \cite{DM1, DM2, KM, Ph3, Ph4, Ph} at least for $2-1/n<p\leq n$. 

In this paper, to treat the sub-natural  case $q\geq p-\delta$ for equation \eqref{basicpde},  we have to come up with some new ingredients. One such ingredient is the
local interior and boundary comparison estimates below the natural exponent $p$ (see Lemmas \ref{DM} and \ref{DMboundary} below). Those important  comparison estimates 
enable some of the techniques developed for the super-natural case mentioned above to be effectively employed here.

Finally,  following the approach of \cite{MP14},
the Lorentz-Morrey bound obtained in Theorem \ref{main} can be used to obtain a sharp existence result
 the quasilinear Riccati type equation 
\begin{equation*}
\left\{ \begin{array}{rcl}
 \text{div}\,\aa(x, \nabla u)&=& |\nabla u|^q + \sigma  \quad \text{in} ~\Omega, \\
u&=&0  \quad \text{on}~ \partial \Omega,
\end{array}\right. 
\end{equation*} 
with  a distributional datum $\sigma$ in the sub-natural range $q\in (p-\delta, p]$. As this seems to be out of the scope of this paper, we choose to 
pursue that study elsewhere in our future work.

\bigskip

\noindent {\bf Notation.}  Throughout the paper, we shall write $A\apprle B$ to denote $A\leq c\, B$ for a positive constant 
$c$ independent of the parameters involved. Basically, $c$ is allowed to depend only on $n, p, \gamma, \Lambda_0, \Lambda_1,$ and $b$. Likewise,
$A\apprge B$ means $A\geq c\, B$, and $A\simeq B$ means $c_1 B \leq A\leq c_2 B$ for some positive constants $c_1$ and $c_2$.

\section{Local interior  estimates}
In this section, we obtain certain local interior estimates  for 
very weak solutions  of \eqref{basicpde}. These include the important comparison estimates below the natural exponent $p$.
We shall make use of the following  nonlinear Hodge decomposition of \cite{IW}.

\begin{theorem}[Nonlinear Hodge Decomposition \cite{IW}]\label{hodge}
 Let $s>1$,  $ \epsilon \in (-1, s-1)$, and let  $w \in W_0^{1,s}(B)$ where $B\subset\RR^n$ is a ball.
Then there exist $\phi \in W_0^{1,\frac{s}{1+\epsilon}}(B)$ and a divergence free  vector
field ${\mbfh} \in L^{\frac{s}{1+\epsilon}}(B, \RR^{n})$ such that 
\begin{equation*}
 |\nabla w|^{\epsilon}\, \nabla w = \nabla \phi + {\mbfh}.
\end{equation*}
Moreover, the following estimate holds:
\begin{equation*}
 \|{\mbfh}\|_{L^{\frac{s}{1+\epsilon}} (B)} \leq C(s,n)\,  |\epsilon|\,  \|\nabla w\|_{L^{s}(B)}^{1+\epsilon}.
\end{equation*}
\end{theorem}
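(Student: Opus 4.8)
The statement is the $L^{q}$ Helmholtz (Hodge) decomposition of the vector field $|\nabla w|^{\epsilon}\nabla w$, and the only non-routine point is the factor $|\epsilon|$ on the right-hand side. The plan is to extract it by a complex-interpolation argument resting on the observation that at $\epsilon=0$ the field $|\nabla w|^{0}\nabla w=\nabla w$ is already a pure gradient, so the divergence-free part of its decomposition vanishes there.

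First set $r:=s/(1+\epsilon)$; since $-1<\epsilon<s-1$ we have $1<r<\infty$, and $|\nabla w|^{\epsilon}\nabla w\in L^{r}(B,\RR^{n})$ with $\big\| |\nabla w|^{\epsilon}\nabla w \big\|_{L^{r}(B)}=\|\nabla w\|_{L^{s}(B)}^{1+\epsilon}$. The $L^{q}$ theory for the Dirichlet Laplacian on the ball $B$ (Calder\'on--Zygmund estimates) furnishes, for every $q\in(1,\infty)$, bounded complementary projections $\mathbf{P},\mathbf{E}\colon L^{q}(B,\RR^{n})\to L^{q}(B,\RR^{n})$, where $\mathbf{P}F=\nabla\psi_{F}$ with $\psi_{F}\in W^{1,q}_{0}(B)$ solving $\Delta\psi_{F}=\text{div}\,F$, and $\mathbf{E}=I-\mathbf{P}$; thus $\mathbf{E}$ maps onto divergence-free fields, $\mathbf{E}(\nabla g)=0$ for every $g\in W^{1,q}_{0}(B)$, and $\|\mathbf{E}\|_{L^{q}\to L^{q}}\le C(n,q)$. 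Taking $q=r$ and putting $\nabla\phi:=\mathbf{P}(|\nabla w|^{\epsilon}\nabla w)$ (so that $\phi\in W^{1,r}_{0}(B)$) and $\mbfh:=\mathbf{E}(|\nabla w|^{\epsilon}\nabla w)\in L^{r}(B,\RR^{n})$ yields the decomposition $|\nabla w|^{\epsilon}\nabla w=\nabla\phi+\mbfh$ with $\text{div}\,\mbfh=0$ in $\DD'(B)$ and the crude bound $\|\mbfh\|_{L^{r}(B)}\le C(n,r)\,\|\nabla w\|_{L^{s}(B)}^{1+\epsilon}$. It remains only to improve $C(n,r)$ to $C(n,s)\,|\epsilon|$ (the case $\epsilon=0$ being trivial, since then $\mbfh=0$).

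For that I would study $G(z):=\mathbf{E}\big(|\nabla w|^{z}\nabla w\big)$. For a.e.\ $x$, the map $z\mapsto|\nabla w(x)|^{z}\nabla w(x)$ is analytic on $\{\mathrm{Re}\,z>-1\}$ with modulus $|\nabla w(x)|^{1+\mathrm{Re}\,z}$, so on the strip $S:=\{-1<\mathrm{Re}\,z<s-1\}$ the map $z\mapsto G(z)$ is analytic in the weak sense of pairing against fixed test functions, bounded on every closed substrip, and satisfies $\|G(a+iy)\|_{L^{s/(1+a)}(B)}\le C\big(n,\tfrac{s}{1+a}\big)\,\|\nabla w\|_{L^{s}(B)}^{1+a}$ for all $-1<a<s-1$ and $y\in\RR$; moreover $G(0)=\mathbf{E}(\nabla w)=0$. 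Since $a\mapsto(1+a)/s$ is affine, $\{L^{s/(1+a)}(B)\}_{a}$ is a complex-interpolation scale, and I would apply Stein's interpolation theorem (the three-lines theorem, with a suitable pair of interpolating test functions) to the bounded analytic function $z\mapsto G(z)/z$ — genuinely analytic on $S$ since $G(0)=0$ removes the apparent singularity — between two lines $\mathrm{Re}\,z=a_{0}$ and $\mathrm{Re}\,z=a_{1}$ with $-1<a_{0}<0<a_{1}<s-1$. With $\epsilon=(1-\theta)a_{0}+\theta a_{1}$ and the boundary bounds $\| G(a_{j}+iy)/(a_{j}+iy) \|_{L^{s/(1+a_{j})}(B)}\le |a_{j}|^{-1}C\big(n,\tfrac{s}{1+a_{j}}\big)\,\|\nabla w\|_{L^{s}(B)}^{1+a_{j}}$, interpolation yields
\[
\frac{\|\mbfh\|_{L^{r}(B)}}{|\epsilon|}=\frac{\|G(\epsilon)\|_{L^{s/(1+\epsilon)}(B)}}{|\epsilon|}\le C(n,s,a_{0},a_{1})\,\|\nabla w\|_{L^{s}(B)}^{1+\epsilon},
\]
the exponent $1+\epsilon$ appearing automatically via $(1-\theta)(1+a_{0})+\theta(1+a_{1})=1+\epsilon$. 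Fixing $a_{0},a_{1}$ once and for all gives the asserted estimate, the constant depending on $n,s$ and, mildly, on how close $\epsilon$ is permitted to be to $-1$ and $s-1$; for $\epsilon$ ranging in any fixed compact subinterval of $(-1,s-1)$ it is $C(n,s)$.

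The main obstacle is making this analytic-family argument rigorous: checking the analyticity and admissible growth of $z\mapsto|\nabla w|^{z}\nabla w$ across the (possibly positive-measure) zero set of $\nabla w$; introducing the standard interpolating test functions $\Psi_{z}$ so that $z\mapsto z^{-1}\int_{B}\langle G(z),\Psi_{z}\rangle\,dx$ is bounded and analytic on the closed substrip with the stated boundary values; and then recovering $\|\mbfh\|_{L^{r}(B)}$ by duality — the one genuine input being $\mathbf{E}(\nabla w)=0$ together with the $L^{q}$-boundedness of $\mathbf{E}$. A secondary technical point is that $\|\mathbf{E}\|_{L^{q}\to L^{q}}$ deteriorates as $q\to1^{+}$ or $q\to\infty$, i.e.\ as $\epsilon$ approaches $-1$ or $s-1$, so uniformity over the whole open interval needs a little care; restricting to compact subintervals is harmless for the applications of this theorem in the sequel.
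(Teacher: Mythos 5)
The paper gives no proof of Theorem~\ref{hodge}, deferring entirely to Iwaniec and Sbordone~\cite{IW}, and your argument coincides with the one used there: take the Dirichlet Helmholtz decomposition on $B$, observe that the divergence-free projection $\mathbf{E}$ annihilates $\nabla w$ because $w\in W^{1,s}_0(B)$, and extract the factor $|\epsilon|$ by applying a three-lines/Stein interpolation on the scale $L^{s/(1+\mathrm{Re}\,z)}(B)$ to $z\mapsto\mathbf{E}(|\nabla w|^{z}\nabla w)/z$, which is analytic on the strip precisely because the numerator vanishes at $z=0$. Your caveat that the constant is only uniform for $\epsilon$ in compact subintervals of $(-1,s-1)$ is correct --- the endpoint lines $\mathrm{Re}\,z=a_0,a_1$ must stay inside the strip and $\|\mathbf{E}\|_{L^{q}\to L^{q}}$ degenerates as $q\to1^{+}$ or $q\to\infty$ --- and is implicit in~\cite{IW}; in any case this paper invokes the lemma only with $\epsilon=-\delta$ for small $\delta$, so it is harmless.
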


Using the above Hodge decomposition, the authors of \cite{IW} obtained gradient $L^q$ regularity below the natural exponent  for very weak solutions to  
 certain quasilinear elliptic equations. 

\begin{theorem}[\cite{IW}]\label{iwaniec}
Suppose that $\aa$ satisfies \eqref{monotone}-\eqref{ellipticity}.
There exists a constant $\tdelta_1=\tdelta_1(n,p,\Lambda_0,\Lambda_1, \gamma)$ with $0<\tdelta_1 < \min\{1, p-1\}$ sufficiently small such that the following holds for any $\delta \in (0, \tdelta_1)$. Let $B$ be a ball and let the vector fields
${\hh}$, ${\bf f} \in L^{\pmd}(B, \RR^n)$. Then for any very weak solution $w \in W_0^{1,\pmd}(B)$ to the equation
\begin{equation*}
 {\rm div} \,\aa(x,{\hh}+\nabla w) = {\rm div}\, |{\bff}|^{p-2} {\bf f} \quad {\rm in~}  B,
\end{equation*}
there holds
\begin{equation}\label{bound-fh}
 \integral_{B} |\nabla w(x)|^{\pmd} \, dx \leq C \integral_{B} \left( |{\hh}(x)|^q + |{\bff }(x)|^{\pmd} \right) \, dx,
\end{equation}
where $C=C(n,p,\Lambda_0,\Lambda_1, \gamma)$.
\end{theorem}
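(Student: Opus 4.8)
The plan is to combine the nonlinear Hodge decomposition (Theorem \ref{hodge}) with the monotonicity condition \eqref{monotone} and the Hölder-type structure \eqref{ellipticity}, exploiting the fact that the decomposition error is controlled by the small parameter $|\epsilon|$. Set $\epsilon=-\delta$ (more precisely, we will apply the decomposition to $w$ with exponent $s=\pmd$ and $\epsilon$ chosen so that $s/(1+\epsilon)$ equals the natural exponent $p$; one checks this forces $1+\epsilon = (\pmd)/p$, i.e.\ $\epsilon = -\delta/p$, which lies in $(-1,s-1)$ for $\delta$ small). Thus we write $|\nabla w|^{\epsilon}\nabla w = \nabla\phi + \mbfh$ with $\phi\in W_0^{1,p}(B)$, $\mbfh\in L^{p}(B,\RR^n)$ divergence free, and the crucial bound $\|\mbfh\|_{L^{p}(B)} \leq C(n,p)\,\delta\,\|\nabla w\|_{L^{\pmd}(B)}^{1+\epsilon}$.

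The next step is to test the weak formulation of the equation ${\rm div}\,\aa(x,\hh+\nabla w) = {\rm div}\,|\bff|^{p-2}\bff$ against $\phi$, which is a legitimate test function since $\phi\in W_0^{1,p}(B)$ and, by \eqref{ellipticity-full}, $\aa(x,\hh+\nabla w)\in L^{p'}(B)$ once we know $\hh,\nabla w\in L^{\pmd}$ with $\pmd$ close to $p$ — here one must be slightly careful and absorb/approximate, but this is the standard very-weak-solution machinery. Using $\nabla\phi = |\nabla w|^{\epsilon}\nabla w - \mbfh$ and the fact that $\mbfh$ is divergence free (so $\int_B \aa(x,\hh+\nabla w)\cdot\mbfh\,dx$ can be rewritten after subtracting a constant vector, or handled via $\aa(x,0)=0$), we arrive at an identity of the form
\begin{equation*}
\integral_B \langle \aa(x,\hh+\nabla w), |\nabla w|^{\epsilon}\nabla w\rangle\,dx = \integral_B \langle |\bff|^{p-2}\bff, \nabla\phi\rangle\,dx + \integral_B \langle \aa(x,\hh+\nabla w) - \aa(x,\hh), \mbfh\rangle\,dx.
\end{equation*}
On the left side, we split $\aa(x,\hh+\nabla w) = \aa(x,\hh+\nabla w) - \aa(x,\nabla w) + \aa(x,\nabla w) - \aa(x,0)$ and pair with $|\nabla w|^{\epsilon}\nabla w$; the term $\langle \aa(x,\nabla w)-\aa(x,0), |\nabla w|^{\epsilon}\nabla w\rangle$ is bounded below using \eqref{monotone} (with $\zeta=0$) by $c\,|\nabla w|^{p+\epsilon} = c\,|\nabla w|^{\pmd'}$ for the appropriate exponent, giving the coercive main term $\apprge \int_B |\nabla w|^{\pmd}\,dx$ after adjusting notation. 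The remaining terms coming from $\aa(x,\hh+\nabla w)-\aa(x,\nabla w)$ on the left and from $\aa(x,\hh+\nabla w)-\aa(x,\hh)$ on the right are estimated by \eqref{ellipticity}: each is bounded by $C\int_B |\hh|^{\gamma}(|\hh|^2+|\nabla w|^2)^{(p-1-\gamma)/2}|\nabla w|^{1+\epsilon}$ or by $|\mbfh|$ times a similar factor. Applying Hölder's inequality with carefully chosen exponents (splitting powers of $|\nabla w|$ between $L^{\pmd}$ norms and the rest), together with the Hodge bound $\|\mbfh\|_{L^p}\lesssim\delta\|\nabla w\|_{L^{\pmd}}^{1+\epsilon}$ and $\|\nabla\phi\|_{L^p}\lesssim \|\nabla w\|_{L^{\pmd}}^{1+\epsilon}$, every term except the coercive one carries either a factor of $\delta$ or a factor of $\|\hh\|_{L^{\pmd}}$; the $\delta$-small terms are absorbed into the left side by choosing $\tdelta_1$ small, and we use Young's inequality to split mixed products $\|\hh\|^a\|\nabla w\|^b$ into $\|\hh\|^{\pmd}$ plus an absorbable $\|\nabla w\|^{\pmd}$, leading to \eqref{bound-fh}.

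The main obstacle, and the reason the smallness of $\delta$ enters essentially, is controlling the error term $\int_B \langle \aa(x,\hh+\nabla w)-\aa(x,\hh), \mbfh\rangle\,dx$: a priori $\aa(x,\hh+\nabla w)-\aa(x,\hh)$ is only in $L^{\pmd/(p-1)}$ (roughly, by \eqref{ellipticity}) and $\mbfh\in L^p$, which do not pair by Hölder unless $p-1 \le \pmd$, i.e.\ unless we are genuinely close to $p$ — and even then the bookkeeping of exponents in the Hölder/Young step is delicate because the "gain" factor $\delta$ from the Hodge estimate must dominate a factor like $\|\nabla w\|_{L^{\pmd}}^{(p-1)/(\text{something})}$ whose total homogeneity must match $\pmd$ exactly for the absorption to close. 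Getting the exponent arithmetic to balance (so that the coercive term on the left is homogeneous of the same degree $\pmd$ as the error terms, after the Hodge substitution) is the technical heart; once the homogeneities are matched, the $\delta$-smallness and Young's inequality finish the argument. A secondary, more routine obstacle is justifying that $\phi$ is an admissible test function for a merely very weak solution, which is handled by the usual density/truncation argument given that $\pmd > p-1$ and $\aa$ has $(p-1)$-growth.
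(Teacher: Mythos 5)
Your proposal correctly identifies the nonlinear Hodge decomposition (Theorem \ref{hodge}) as the engine of the proof, which is indeed the route taken in \cite{IW} and in the paper's Lemma \ref{DM}. However, the parenthetical ``correction'' you make in the choice of $\epsilon$ is a genuine error that breaks the exponent bookkeeping at every subsequent step. You replace $\epsilon=-\delta$ by $\epsilon=-\delta/p$ in order to force $\phi\in W_0^{1,p}(B)$, asserting that $\aa(x,\hh+\nabla w)\in L^{p'}(B)$; but when $\hh,\nabla w$ lie only in $L^{p-\delta}$, the growth condition \eqref{ellipticity-full} gives $\aa(x,\hh+\nabla w)\in L^{(p-\delta)/(p-1)}$, which is strictly smaller than $L^{p/(p-1)}$. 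A direct check shows $\frac{p-1}{p-\delta}+\frac{1}{p}>1$ for $\delta>0$, so the pairing $\int_B\aa(x,\hh+\nabla w)\cdot\nabla\phi\,dx$ is not even defined by H\"older with your choice of test function. Moreover, monotonicity applied to the main term $\aa(x,\nabla w)\cdot|\nabla w|^\epsilon\nabla w$ produces $|\nabla w|^{p+\epsilon}=|\nabla w|^{p-\delta/p}$, and since $p-\delta/p>p-\delta$ this is a quantity you have no a priori control over: the homogeneity of the coercive term would exceed the degree $p-\delta$ of the space you are working in, and the absorption scheme you describe cannot close.

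The correct choice is $\epsilon=-\delta$ (the one you wrote first and then abandoned), which places $\phi$ in $W_0^{1,(p-\delta)/(1-\delta)}(B)$ and $\mbfh$ in $L^{(p-\delta)/(1-\delta)}(B)$; the exponent $(p-\delta)/(1-\delta)$ is precisely conjugate to $(p-\delta)/(p-1)$, so both the admissibility of $\phi$ as a test function and every H\"older step close at total homogeneity $p-\delta$. This is exactly the choice used in the paper's Lemma \ref{DM}. A secondary problem is the displayed identity in your second paragraph: you replace $\int_B\aa(x,\hh+\nabla w)\cdot\mbfh\,dx$ by $\int_B\bigl(\aa(x,\hh+\nabla w)-\aa(x,\hh)\bigr)\cdot\mbfh\,dx$, citing the divergence-free property of $\mbfh$. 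But $\aa(x,\hh(x))$ is neither a constant vector nor a gradient field in $x$, so $\int_B\aa(x,\hh)\cdot\mbfh\,dx$ has no reason to vanish; this subtraction is unjustified. Fortunately it is also unnecessary: once $\epsilon=-\delta$ is fixed, the $\mbfh$-pairing is estimated directly by
$\norm{(|\hh|+|\nabla w|)^{p-1}}_{L^{(p-\delta)/(p-1)}}\norm{\mbfh}_{L^{(p-\delta)/(1-\delta)}}\apprle\delta\bigl(\norm{\hh}_{L^{p-\delta}}^{p-1}+\norm{\nabla w}_{L^{p-\delta}}^{p-1}\bigr)\norm{\nabla w}_{L^{p-\delta}}^{1-\delta}$,
and Young's inequality plus the smallness of $\delta$ absorb it. With these two corrections your outline matches the Iwaniec--Sbordone argument. (Note also that the paper itself does not re-prove this theorem but cites \cite[Theorem 5.1]{IW}, pointing to the Lipschitz-truncation proof of Theorem \ref{appriori-boundary} as the way to handle the weaker H\"older condition \eqref{ellipticity}.)
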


It is worth mentioning that inequality \eqref{bound-fh}  was obtained in \cite[Theorem 5.1]{IW} under a Lipschitz type condition on 
$\mathcal{A}(x, \cdot)$, i.e., \eqref{ellipticity} was assumed to hold with $\gamma=1$. We observe that the proof of \cite[Theorem 5.1]{IW}
can easily be modified  to obtain \eqref{bound-fh} under the weaker H\"older type condition \eqref{ellipticity} with any $\gamma\in (0, 1)$; see
also the proof of Theorem \ref{appriori-boundary} below.

We next state a  well-known {\it interior} higher integrability result that was originally obtained in \cite{IW} and \cite{John} (see also \cite{Mik}).

\begin{theorem}[\cite{IW}, \cite{John}]\label{regularity}
Suppose that $\aa$ satisfies \eqref{monotone} and \eqref{ellipticity-full}.
 There exists a constant $\tdelta_2=\tdelta_2(n,p,\Lambda_0,\Lambda_1)\in (0, 1/2)$ such that every very weak solution $w\in W_{\rm loc}^{1,p- \tdelta_2}(\tilde{\Om})$ to the equation
 ${\rm div}\, \aa(x,\nabla w) = 0$ in an open set $\tilde{\Om}$ belongs to $W_{\rm loc}^{1,p+ \tdelta_2}(\tilde{\Om})$. Moreover,  the inequality 
\begin{equation}\label{higher-integrability}
 \left( \fintegral_{B_{r/2}(x)} |\nabla w(x)|^{p+\tdelta_2} \, dx \right)^{\frac{1}{p+\tdelta_2}} \leq C  \left( \fintegral_{B_r(x)} |\nabla w(x)|^{p-\tdelta_2} \, dx \right)^{\frac{1}{p-\tdelta_2}}
\end{equation} 
holds  for any ball  $ B_r(x) \subset \tilde{\Om}$ with a constant $C=C(n, p, \Lambda_0,\Lambda_1)$.
\end{theorem}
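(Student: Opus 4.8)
We sketch a proof along classical Gehring--Meyers lines, adapted to exponents below the natural one; the device that makes this possible, exactly as in \cite{IW,John}, is the nonlinear Hodge decomposition of Theorem \ref{hodge} (since $w$ need not have finite $p$-energy, the test function $\eta^{p}(w-c)$ is not admissible). The plan has four steps: (i) a Caccioppoli-type inequality on concentric balls in which the energy term appears with a \emph{small} coefficient; (ii) removal of that term by a standard iteration lemma; (iii) conversion, via Sobolev--Poincar\'e, into a reverse H\"older inequality with increasing supports; (iv) Gehring's lemma, followed by a suitable choice of $\tdelta_2$.

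For (i), fix $B_{R}=B_{R}(x_{0})$ with $\overline{B_{R}}\subset\tilde{\Om}$; for $R/2\le s<t\le R$ take $\eta\in C_{c}^{\infty}(B_{t})$ with $\eta\equiv1$ on $B_{s}$, $0\le\eta\le1$, $|\nabla\eta|\apprle (t-s)^{-1}$. Put $c=(w)_{B_{R}}$ and $v=\eta(w-c)\in W_{0}^{1,p-\tdelta}(B_{t})$, and apply Theorem \ref{hodge} with $s$ replaced by $p-\tdelta$ and $\epsilon=-\tdelta\in(-1,p-\tdelta-1)$: there are $\phi\in W_{0}^{1,\frac{p-\tdelta}{1-\tdelta}}(B_{t})$ and a divergence free field $\mathbf{H}$ with
\[
|\nabla v|^{-\tdelta}\nabla v=\nabla\phi+\mathbf{H},\qquad \|\mathbf{H}\|_{L^{\frac{p-\tdelta}{1-\tdelta}}(B_{t})}\apprle\tdelta\,\|\nabla v\|_{L^{p-\tdelta}(B_{t})}^{1-\tdelta}.
\]
Since $|\aa(x,\nabla w)|\le\Lambda_{1}|\nabla w|^{p-1}\in L^{\frac{p-\tdelta}{p-1}}_{\rm loc}$ and $\tfrac{p-\tdelta}{1-\tdelta}=\big(\tfrac{p-\tdelta}{p-1}\big)'$, the function $\phi$ is an admissible test function, so $\integral_{B_{t}}\aa(x,\nabla w)\cdot\nabla\phi\,dx=0$ and hence
\[
\integral_{B_{t}}\aa(x,\nabla w)\cdot|\nabla v|^{-\tdelta}\nabla v\,dx=\integral_{B_{t}}\aa(x,\nabla w)\cdot\mathbf{H}\,dx .
\]
The right-hand side is $\apprle\tdelta\,\|\nabla w\|_{L^{p-\tdelta}(B_{t})}^{p-1}\big(\|\nabla w\|_{L^{p-\tdelta}(B_{t})}+\|(w-c)/(t-s)\|_{L^{p-\tdelta}(B_{t})}\big)^{1-\tdelta}$ by H\"older. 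On the left split $\nabla v=\eta\nabla w+(w-c)\nabla\eta$; by \eqref{monotone} at $\zeta=0$ one has $\langle\aa(x,\xi),\xi\rangle\ge\Lambda_{0}|\xi|^{p}$, so the $\eta\nabla w$ part is $\ge\Lambda_{0}\integral_{B_{t}}\eta|\nabla w|^{p}|\nabla v|^{-\tdelta}\ge\Lambda_{0}\integral_{B_{s}}|\nabla w|^{p-\tdelta}$ (on $B_{s}$, $\nabla v=\nabla w$), while the $(w-c)\nabla\eta$ part, supported in $B_{t}\setminus B_{s}$, is controlled by the same two quantities after the standard splitting that tames the negative power $|\nabla v|^{-\tdelta}$ (as in \cite{IW,John}, pairing it with a positive power of $|\nabla v|$ or, if preferred, using a higher power $\eta^{\kappa}$ of the cutoff). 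Combining these bounds, using Young's inequality with conjugate exponents $\tfrac{p-\tdelta}{p-1}$ and $\tfrac{p-\tdelta}{1-\tdelta}$, and absorbing, one arrives at
\[
\integral_{B_{s}}|\nabla w|^{p-\tdelta}\,dx\le(C_{0}\tdelta+\varepsilon)\integral_{B_{t}}|\nabla w|^{p-\tdelta}\,dx+\frac{C_{\varepsilon}}{(t-s)^{p-\tdelta}}\integral_{B_{R}}|w-c|^{p-\tdelta}\,dx
\]
for all $R/2\le s<t\le R$.

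Choosing $\varepsilon$ and then $\tdelta$ so that $C_{0}\tdelta+\varepsilon<1$, a standard iteration lemma removes the energy term and gives $\integral_{B_{R/2}}|\nabla w|^{p-\tdelta}\apprle R^{-(p-\tdelta)}\integral_{B_{R}}|w-(w)_{B_{R}}|^{p-\tdelta}$. The scale-invariant Sobolev--Poincar\'e inequality with exponent $m:=\frac{n(p-\tdelta)}{n+p-\tdelta}<p-\tdelta$ (note $m<n$, using $p\le n$) then yields the reverse H\"older inequality with increasing supports
\[
\left(\fintegral_{B_{R/2}}|\nabla w|^{p-\tdelta}\,dx\right)^{\frac{1}{p-\tdelta}}\apprle\left(\fintegral_{B_{R}}|\nabla w|^{m}\,dx\right)^{\frac1m}
\]
for every ball $B_{R}\subset\tilde{\Om}$. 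Since $\nabla w\in L^{p-\tdelta}_{\rm loc}(\tilde{\Om})$ by hypothesis, Gehring's lemma applies, producing an exponent gain $\sigma_{0}=\sigma_{0}(n,p,\Lambda_{0},\Lambda_{1})>0$ (uniform as $\tdelta\to0$, since $m/(p-\tdelta)$ and the constant above stay bounded) together with the quantitative estimate of the form \eqref{higher-integrability}. It then suffices to fix $\tdelta_{2}\in(0,1/2)$, also with $\tdelta_{2}<p-1$, small enough that $(p-\tdelta_{2})(1+\sigma_{0})\ge p+\tdelta_{2}$; running the above with $\tdelta=\tdelta_{2}$ gives $w\in W^{1,p+\tdelta_{2}}_{\rm loc}(\tilde{\Om})$ and \eqref{higher-integrability}, the radii $B_{r/2}\subset B_{r}$ being obtained by the usual covering argument.

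The only genuine obstacle is the Caccioppoli estimate in step (i): the precise choice $\epsilon=-\tdelta$ is what makes $\phi$ an admissible test function, and it is the factor $|\epsilon|=\tdelta$ in the Hodge bound for $\mathbf{H}$ that lets the term $\integral_{B_{t}}|\nabla w|^{p-\tdelta}$ be reabsorbed with a small coefficient --- this mechanism collapses at $\tdelta=0$, which is exactly why $\tdelta$ (hence $\tdelta_{2}$) must be taken small. The negative power $|\nabla v|^{-\tdelta}$, which lies in no $L^{b}$, must always be handled paired with a positive power; this forces the standard case analysis on the annulus but is otherwise routine, as are the iteration lemma and the Sobolev--Poincar\'e step.
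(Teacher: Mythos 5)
The paper does not prove this theorem at all: it is quoted as a known result, with references to \cite{IW} and \cite{John}, and the remark immediately following it explains the state of the literature. In particular, the remark points out that the proof in \cite{IW} proceeds via the a priori estimate \eqref{bound-fh} (Theorem \ref{iwaniec}), which requires the H\"older condition \eqref{ellipticity}, and that to dispense with \eqref{ellipticity} (which is \emph{not} among the hypotheses of Theorem \ref{regularity}) one should instead run Lewis's Lipschitz truncation argument from \cite{John}, as carried out in \cite[Theorem 9.4]{Mik}. This is also the method the paper itself uses to prove the boundary analogue, Theorem \ref{higher-integrability-boundary}.

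Your proposal therefore takes a genuinely different route from both the paper's implicit recommendation and from the actual argument in \cite{IW}: you run the Hodge decomposition directly on the localized test function $v=\eta(w-c)$, rather than via the global a priori estimate. That is a sound idea precisely because it avoids \eqref{ellipticity}: the only properties of $\aa$ you use are the coercivity $\langle\aa(x,\xi),\xi\rangle\ge\Lambda_0|\xi|^{p}$ (from \eqref{monotone} with $\zeta=0$) and the growth bound \eqref{ellipticity-full}. The duality pairing is also correct, since $\frac{p-\tdelta}{1-\tdelta}=\bigl(\frac{p-\tdelta}{p-1}\bigr)'$.

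The one place your sketch is genuinely underexplained is the annulus term, and the phrase ``pairing it with a positive power of $|\nabla v|$'' does not, as stated, give an argument. If one tries to split the annulus according to which of $\eta|\nabla w|$ and $|w-c||\nabla\eta|$ dominates, one is left with an untamed factor $\eta^{-\tdelta}$ near the outer boundary where $\eta$ is small. The clean device --- which is in fact the one used in \cite{IW} and \cite{GLS} --- is the $(1-\tdelta)$-H\"older continuity of the map $\xi\mapsto|\xi|^{-\tdelta}\xi$: since $\nabla v-\eta\nabla w=(w-c)\nabla\eta$, one has
\begin{equation*}
|\nabla v|^{-\tdelta}\nabla v = \eta^{1-\tdelta}\,|\nabla w|^{-\tdelta}\nabla w + E, \qquad |E|\apprle\bigl(|w-c|\,|\nabla\eta|\bigr)^{1-\tdelta}.
\end{equation*}
The first summand, paired with $\aa(x,\nabla w)$ and using coercivity, gives $\ge\Lambda_0\integral_{B_t}\eta^{1-\tdelta}|\nabla w|^{p-\tdelta}\,dx\ge\Lambda_0\integral_{B_s}|\nabla w|^{p-\tdelta}\,dx$; the error term, paired with $\aa(x,\nabla w)$ and using \eqref{ellipticity-full}, gives a contribution $\apprle\integral_{B_t}|\nabla w|^{p-1}\bigl(|w-c||\nabla\eta|\bigr)^{1-\tdelta}\,dx$, which Young's inequality with exponents $\frac{p-\tdelta}{p-1}$ and $\frac{p-\tdelta}{1-\tdelta}$ absorbs. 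With this replacement your Caccioppoli inequality is rigorous, and the remaining steps (iteration lemma, Sobolev--Poincar\'e with $m=\frac{n(p-\tdelta)}{n+p-\tdelta}$, and Gehring with a gain uniform as $\tdelta\downarrow 0$, since $m/(p-\tdelta)=n/(n+p-\tdelta)$ stays bounded away from $1$) are standard, so the argument closes and yields $\tdelta_2$ with $p-\tdelta_2+\sigma_0\ge p+\tdelta_2$ as you indicate.
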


\begin{remark}{\rm
We notice that Theorem \ref{regularity} was obtained in \cite{IW} under a 
homogeneity condition on $\mathcal{A}(x, \cdot)$, i.e., $\mathcal{A}(x, \lambda \xi)=|\lambda|^{p-2}\lambda \mathcal{A}(x, \xi)$ for all $x, \xi\in\RR^n$ 
and $\lambda\in\RR$.  This condition has been removed in  \cite{GLS}.  Moreover, the proof of Theorem \ref{regularity} in \cite{IW} uses
inequality \eqref{bound-fh} and thus requires the H\"older type condition \eqref{ellipticity}.
As a matter of fact, following the method of \cite{John}, one can prove 
interior higher integrability under only  conditions \eqref{monotone} and \eqref{ellipticity-full}. For details see, e.g., \cite[Theorem 9.4]{Mik}.  
}\end{remark}
A consequence of Theorems \ref{iwaniec} and \ref{regularity} is the following important existence result.

\begin{corollary}[\cite{IW}]\label{iwaniec-exist}
Under \eqref{monotone}-\eqref{ellipticity},
let   $\tdelta_1$ and $\tdelta_2$ are as in Theorems \ref{iwaniec} and \ref{regularity}, respectively. Let $B\subset\RR^n$ be a ball.
For any function $w_0 \in W^{1,\pmd}(B)$, with $\delta\in (0, \min\{\tdelta_1, \tdelta_2\})$,  there exists a very weak solution
$w \in w_0+ W^{1, \pmd} (B)$  to the equation ${\rm div}\, \aa(x,\nabla w) = 0$
such that
\begin{equation*}
\integral_{B} |\nabla w(x)|^{\pmd} \, dx  \leq C(n,p,\Lambda_0,\Lambda_1, \gamma)   \integral_{B} |\nabla w_0(x)|^{\pmd} \, dx.
\end{equation*}
\end{corollary}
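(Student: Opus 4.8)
The plan is to construct $w$ by the standard method of solving the Dirichlet problem in an increasing sequence of exponents and then passing to a limit, but since we are working \emph{below} the natural exponent we cannot directly minimize an energy. Instead, I would proceed by a fixed-point / successive-approximation scheme built around the Hodge decomposition and Theorem \ref{iwaniec}. First, fix $\delta\in(0,\min\{\tdelta_1,\tdelta_2\})$ and set $s=p-\delta$. Given $w_0\in W^{1,s}(B)$, the idea is to produce a very weak solution $w$ with $w-w_0\in W_0^{1,s}(B)$ of ${\rm div}\,\aa(x,\nabla w)=0$. Write $w=w_0+v$ with $v\in W_0^{1,s}(B)$; then $v$ should solve ${\rm div}\,\aa(x,\nabla w_0+\nabla v)=0$. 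This is exactly the type of equation appearing in Theorem \ref{iwaniec} with ${\hh}=\nabla w_0$ (after absorbing, if needed, the $\aa(x,\nabla w_0)$ term into the right-hand side via the bound \eqref{ellipticity-full}, $|\aa(x,\nabla w_0)|\le\Lambda_1|\nabla w_0|^{p-1}$, so the datum $|{\bf f}|^{p-2}{\bf f}$ can be taken of size $\apprle |\nabla w_0|^{p-1}$), and Theorem \ref{iwaniec} furnishes the a priori bound $\int_B|\nabla v|^s\,dx\le C\int_B|\nabla w_0|^s\,dx$.

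The key steps, in order, are: (1) \emph{Regularization of the datum.} Approximate $w_0$ by smooth functions $w_0^{(k)}\to w_0$ in $W^{1,s}(B)$, or more robustly, truncate so that $\nabla w_0\in L^{p-\delta}\cap L^{p'}$ for some $p'\ge p$; for such good data the classical existence theory (monotone operator theory in $W_0^{1,p}(B)$, using \eqref{monotone} and \eqref{ellipticity-full}) produces a genuine finite-energy solution $w^{(k)}=w_0^{(k)}+v^{(k)}$. (2) \emph{Uniform estimate.} Apply Theorem \ref{iwaniec} to each $v^{(k)}$ to get $\|\nabla v^{(k)}\|_{L^{p-\delta}(B)}\le C\|\nabla w_0^{(k)}\|_{L^{p-\delta}(B)}\le C\|\nabla w_0\|_{L^{p-\delta}(B)}$, with $C=C(n,p,\Lambda_0,\Lambda_1,\gamma)$ independent of $k$. (3) \emph{Compactness and passage to the limit.} Since $(v^{(k)})$ is bounded in $W_0^{1,p-\delta}(B)$, extract a weakly convergent subsequence $v^{(k)}\rightharpoonup v$ in $W_0^{1,p-\delta}(B)$; by Theorem \ref{regularity} each $w^{(k)}$ enjoys interior higher integrability, so $\nabla w^{(k)}$ is locally bounded in $L^{p+\tdelta_2}$, which upgrades the convergence to strong convergence of $\nabla w^{(k)}$ in $L^{p-\delta}_{\rm loc}(B)$ and hence a.e. convergence of the gradients along a further subsequence. (4) \emph{Identification of the limit.} Using a.e. convergence of $\nabla w^{(k)}$, the Carathéodory property and the growth bound \eqref{ellipticity-full}, pass to the limit in the weak formulation $\int_B\aa(x,\nabla w^{(k)})\cdot\nabla\varphi\,dx=0$ (with a vanishing right-hand side in the limit) to conclude ${\rm div}\,\aa(x,\nabla w)=0$ in the very weak sense, and by lower semicontinuity of the $L^{p-\delta}$ norm under weak convergence, $\int_B|\nabla w|^{p-\delta}\le C\int_B|\nabla w_0|^{p-\delta}$.

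The main obstacle is step (3)–(4): promoting weak convergence of $\nabla w^{(k)}$ to a.e. convergence so that one may pass to the limit inside the nonlinear term $\aa(x,\nabla w^{(k)})$. The Hodge-decomposition machinery underlying Theorem \ref{iwaniec} only gives a bound at the \emph{sub}-natural exponent, where the usual monotonicity trick of Leray–Lions (testing with $v^{(k)}-v$) is not directly available because $\nabla w^{(k)}-\nabla w$ need not be an admissible test gradient at level $p$. The way around this is precisely the interior higher integrability of Theorem \ref{regularity}, applied to the equation ${\rm div}\,\aa(x,\nabla w^{(k)})=0$ itself: it forces $\nabla w^{(k)}\in L^{p+\tdelta_2}_{\rm loc}$ uniformly, moving us back above the natural exponent on compact subsets, where the standard monotonicity argument applies and yields strong $L^{p-\delta}_{\rm loc}$ convergence of the gradients. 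A minor technical point worth being careful about is that the datum on the right-hand side, $|{\bf f}|^{p-2}{\bf f}$ with $|{\bf f}|\apprle|\nabla w_0|$, lies only in $L^{(p-\delta)/(p-1)}$, which is why one regularizes $w_0$ first and removes the regularization only at the very end; the estimate survives because the constant in Theorem \ref{iwaniec} depends only on the structural parameters and $n,p$.
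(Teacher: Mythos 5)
Your proposal is correct and follows the same approximation‑and‑compactness scheme that the paper attributes to \cite[Theorem~2]{IW}: solve the Dirichlet problem for regularized boundary data $w_0^{(k)}$ via monotone operator theory, apply the a priori bound of Theorem~\ref{iwaniec} with $\hh=\nabla w_0^{(k)}$ and ${\bf f}\equiv 0$ to obtain a uniform $W^{1,p-\delta}$ estimate, and use the interior higher integrability of Theorem~\ref{regularity} to place the gradients locally above the natural exponent so that the standard monotonicity (Minty) argument identifies the limit and yields a.e.\ convergence of the gradients, the energy bound then following by weak lower semicontinuity. One small clarification: your closing remark about absorbing $\aa(x,\nabla w_0)$ into a right‑hand side $|{\bf f}|^{p-2}{\bf f}$ of low integrability is unnecessary, since Theorem~\ref{iwaniec} already allows a nonzero $\hh$ and applies directly with ${\bf f}\equiv 0$; this slip does not affect the validity of your argument.
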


We shall need to prove versions of  Theorems \ref{iwaniec} and Corollary \ref{iwaniec-exist}
for domains whose complements are uniformly $p$-thick. These  new results will be obtained later in Theorem \ref{appriori-boundary} and
Corollary \ref{exist-thick}. A version of  Theorem \ref{regularity} upto the boundary of a 
domain whose complement is uniformly $p$-thick will also be obtained in Theorem  \ref{higher-integrability-boundary} below.

Next, for each ball $B_{2R} = B_{2R} (x_0) \Subset \Omega$ and for any 
$\delta \in (0, \min\{\tdelta_1, \tdelta_2\})$ with $\tdelta_1$ and $\tdelta_2$ as in Theorems \ref{iwaniec} and \ref{regularity}, respectively,  we define $w \in u + W_0^{1,p-\delta}(B_{2R})$    as a very weak solution to the Dirichlet problem
\begin{equation} \label{firstapprox}
\left\{ \begin{array}{rcl}
 \text{div}~ \aa(x, \nabla w)&=&0   \quad \text{in} ~B_{2R}, \\ 
w&=&u  \quad \text{on}~ \partial B_{2R} . 
\end{array}\right.
\end{equation}
The existence of $w$ is ensured by  Corollary \ref{iwaniec-exist}. We mention  that the uniqueness of $w$ is still unknown, but that is not important for the purpose of this paper.
Moreover, by Theorem \ref{regularity} we have that $w \in W_{\rm loc}^{1,p}(B_{2R})$. 
Thus it follows from the standard interior H\"older continuity of
solutions that we have the following decay estimates. The proof of such estimates  can be found in \cite[Theorem 7.7]{Giu}. Henceforth, for $f\in L^1(B)$ 
we write
$$\overline{f}_{B}=\fint_{B} f(x)dx= \frac{1}{|B|}\int_{B} f(x)dx.$$

\begin{lemma}\label{holderint} Let $w$ be as in  \eqref{firstapprox}. There exists a  $\beta_0=\beta_0(n,p,\Lambda_0,\Lambda_1)\in (0, 1/2]$ such that 
\begin{equation*}
\left(\fintegral_{B_{\rho}(z)} |w-\overline{w}_{B_{\rho}(z)}|^{ p} \, dx\right)^{\frac{1}{p}} \leq C \, (\rho/r)^{\beta_0} \left(\fintegral_{B_{r}(z)} |w-
\overline{w}_{B_{r}(z)}|^p \, dx\right)^{\frac{1}{p}}
\end{equation*} 
for any $z\in B_{2R}(x_0)$ with $B_{\rho}(z)\subset B_{r}(z)\Subset B_{2R}(x_0)$. Moreover, there holds 
\begin{equation}\label{holder-nablaw}
\left( \fintegral_{B_{\rho}(z)} |\nabla w|^{ p} \, dx\right)^{\frac{1}{p}} \leq C \, (\rho/r)^{\beta_0-1} \left(\fintegral_{B_{r}(z)} |\nabla w|^p \, dx\right)^{\frac{1}{p}}
\end{equation} 
for any $z\in B_{2R}(x_0)$ such that  $B_{\rho}(z)\subset B_{r}(z)\Subset B_{2R}(x_0)$. 
\end{lemma}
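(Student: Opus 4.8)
The plan is to establish the two displayed inequalities in Lemma \ref{holderint} from the classical De Giorgi--Nash--Moser H\"older regularity theory for bounded solutions of $p$-Laplacian type equations, combined with the Caccioppoli inequality. First I would recall that, by Corollary \ref{iwaniec-exist}, the solution $w$ to \eqref{firstapprox} exists in $u+W_0^{1,p-\delta}(B_{2R})$, and by the interior higher integrability (Theorem \ref{regularity}) we already know $w\in W^{1,p}_{\rm loc}(B_{2R})$; in particular $w$ is a genuine finite-energy solution on every ball $B_r(z)\Subset B_{2R}(x_0)$. Hence the a priori hypotheses needed for \cite[Theorem 7.7]{Giu} are available, and what remains is to phrase the standard oscillation-decay estimate in the integral-averaged form stated here.

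The first inequality is essentially a restatement of interior H\"older continuity: for a solution $w$ of ${\rm div}\,\aa(x,\nabla w)=0$ with $\aa$ satisfying \eqref{monotone} and \eqref{ellipticity-full}, one has ${\rm osc}_{B_\rho(z)}\,w\leq C(\rho/r)^{\beta_0}\,{\rm osc}_{B_r(z)}\,w$ for concentric balls, with $\beta_0=\beta_0(n,p,\Lambda_0,\Lambda_1)$. Since $|w-\overline{w}_{B_\rho(z)}|\leq {\rm osc}_{B_\rho(z)}\,w$ pointwise, the left-hand side of the first display is bounded by ${\rm osc}_{B_\rho(z)}\,w$; and for the right-hand side one uses that ${\rm osc}_{B_r(z)}\,w$ is controlled — up to a constant depending only on $n$ — by the $L^p$ average of $|w-\overline{w}_{B_r(z)}|$ on a comparable ball, via the standard chain: oscillation decay lets one pass from $B_r$ to $B_{r/2}$, then the sup-bound on $B_{r/2}$ combined with a comparison of $\sup$ and $L^p$-mean (for subsolutions $w-\overline{w}_{B_r}$ and $\overline{w}_{B_r}-w$) yields $\|w-\overline{w}_{B_r(z)}\|_{L^\infty(B_{r/2}(z))}\leq C\,(\fint_{B_r(z)}|w-\overline{w}_{B_r(z)}|^p\,dx)^{1/p}$. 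Absorbing the extra dyadic factor into the constant and into $\beta_0$ gives the first claim. This is exactly the content of \cite[Theorem 7.7]{Giu}, so I would cite it rather than reprove it.

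The second inequality \eqref{holder-nablaw} follows by combining the first with the Caccioppoli inequality. Given $B_\rho(z)\subset B_r(z)\Subset B_{2R}(x_0)$, apply Caccioppoli on the pair $B_\rho(z)\subset B_{2\rho}(z)$ to the solution $w-\overline{w}_{B_{2\rho}(z)}$:
\begin{equation*}
\fintegral_{B_\rho(z)}|\nabla w|^p\,dx \leq \frac{C}{\rho^p}\fintegral_{B_{2\rho}(z)}|w-\overline{w}_{B_{2\rho}(z)}|^p\,dx.
\end{equation*}
Then apply the oscillation/mean-oscillation decay (the first inequality, with $B_{2\rho}(z)\subset B_{r/2}(z)$, say) to bound the right-hand side by $C\rho^{-p}(2\rho/r)^{\beta_0 p}\fint_{B_{r/2}(z)}|w-\overline{w}_{B_{r/2}(z)}|^p\,dx$, and finally estimate $\fint_{B_{r/2}(z)}|w-\overline{w}_{B_{r/2}(z)}|^p\,dx\leq C\,r^p\fint_{B_r(z)}|\nabla w|^p\,dx$ by Poincar\'e's inequality on $B_{r/2}(z)$ together with the trivial comparison of averages over $B_{r/2}(z)$ and $B_r(z)$. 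Collecting the powers of $\rho$ and $r$ produces the factor $(\rho/r)^{\beta_0-1}$ (after relabelling $\beta_0$, still depending only on $n,p,\Lambda_0,\Lambda_1$), which is precisely \eqref{holder-nablaw}. One should be slightly careful when $\rho$ is comparable to $r$ (so that $B_{2\rho}(z)\not\subset B_{r/2}(z)$): in that regime the inequality is trivial by monotonicity of averages of $|\nabla w|^p$ up to a constant, so it can be handled separately.

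The main obstacle, such as it is, is bookkeeping rather than mathematics: one must make sure that at every step the intermediate balls stay strictly inside $B_{2R}(x_0)$ (so that the interior regularity and Caccioppoli estimates genuinely apply), and that the constants depend only on $n,p,\Lambda_0,\Lambda_1$ and not on $R$, $z$, or the solution. Since all the ingredients — De Giorgi--Nash--Moser H\"older continuity for $\aa$ satisfying \eqref{monotone}, \eqref{ellipticity-full}, Caccioppoli, and Poincar\'e — are scale- and translation-invariant and structural, this is routine; the cited reference \cite[Theorem 7.7]{Giu} already packages the delicate part (the passage from oscillation to $L^p$-mean oscillation) in exactly the form needed.
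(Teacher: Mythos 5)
Your argument is correct and is essentially the same as the paper's: the paper also observes that $w\in W^{1,p}_{\rm loc}(B_{2R})$ by Theorem~\ref{regularity} and then simply cites \cite[Theorem~7.7]{Giu} for both decay estimates, whereas you spell out the standard chain (oscillation decay from De~Giorgi--Nash--Moser, sup-versus-$L^p$ comparison, then Caccioppoli and Poincar\'e for \eqref{holder-nablaw}) that underlies that reference. Your care in treating the regime $\rho\simeq r$ separately, so that $B_{2\rho}(z)$ stays inside the domain of validity, is exactly the right bookkeeping and matches the intended reading of the cited theorem.
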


Using the higher integrability result of Theorem \ref{regularity}, inequality \eqref{holder-nablaw} can be further ameliorated  as in the following lemma.
We notice that this kind of result can be proved by means of a covering/interpolation argument as demonstrated in \cite[Remark 6.12]{Giu}.
 
\begin{lemma}\label{holderint-nablaw} Let $w$ be as in  \eqref{firstapprox}. There exists a  $\beta_0=\beta_0(n,p,\Lambda_0,\Lambda_1)\in (0, 1/2]$ such that for any $t\in (0, p]$ there holds
\begin{equation*}
\left(\fintegral_{B_{\rho}(z)} |\nabla w|^{ t} \, dx\right)^{\frac{1}{t}} \leq C\, (\rho/r)^{\beta_0-1} \left(\fintegral_{B_{r}(z)} |\nabla w|^t \, dx\right)^{\frac{1}{t}}
\end{equation*} 
for any $z\in B_{2R}(x_0)$ such that  $B_{\rho}(z)\subset B_{r}(z)\Subset B_{2R}(x_0)$ with the constant depending only on  $n, p, \Lambda_0,\Lambda_1$, and $t$.
\end{lemma}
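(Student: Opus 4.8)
The plan is to upgrade the decay estimate \eqref{holder-nablaw}, which controls the $L^p$ average of $\nabla w$, to an analogous estimate for all $L^t$ averages with $0<t\le p$, via a standard self-improving (interpolation) argument that exploits the higher integrability of Theorem \ref{regularity}. The key structural fact I would use is that for a function $g\ge 0$ and exponents $0<t<p<p+\tdelta_2$, H\"older's inequality gives
\[
\left(\fintegral_{B} g^{t}\right)^{1/t} \le \left(\fintegral_{B} g^{p}\right)^{1/p} \le \left(\fintegral_{B} g^{p+\tdelta_2}\right)^{1/(p+\tdelta_2)},
\]
so it suffices to interpolate the $L^p$ quantity between a lower $L^t$ quantity and an upper, reverse-H\"older-controlled $L^{p+\tdelta_2}$ quantity. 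Concretely, by Theorem \ref{regularity} (applied on balls $B_s(z)\Subset B_{2R}(x_0)$, which is legitimate since $w\in W^{1,p}_{\rm loc}(B_{2R})$ hence $w\in W^{1,p-\tdelta_2}_{\rm loc}(B_{2R})$), the $L^{p+\tdelta_2}$ average of $\nabla w$ on $B_{s/2}(z)$ is bounded by its $L^{p-\tdelta_2}$ average on $B_s(z)$, and then by its $L^p$ average on $B_s(z)$.

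First I would fix $z$ and $B_\rho(z)\subset B_r(z)\Subset B_{2R}(x_0)$ and assume without loss of generality that $\rho \le r/4$ (the case $\rho>r/4$ being trivial by H\"older, adjusting the constant). Set $\Phi(s) := \big(\fintegral_{B_s(z)}|\nabla w|^t\big)^{1/t}$ for $s\le r$. The goal is a decay of the form $\Phi(\rho)\apprle (\rho/r)^{\beta_0-1}\Phi(r)$. I would first pass from the $L^t$ scale to the $L^p$ scale at a fixed comparable radius: on any ball $B_{2s}(z)\Subset B_{2R}(x_0)$, combine \eqref{holder-nablaw} (with radii $s$ and $2s$) with the reverse H\"older inequality \eqref{higher-integrability} and Gehring-type self-improvement to get
\[
\left(\fintegral_{B_{s}(z)}|\nabla w|^{p}\right)^{1/p} \apprle \left(\fintegral_{B_{2s}(z)}|\nabla w|^{t}\right)^{1/t},
\]
which is precisely the covering/interpolation statement referenced from \cite[Remark 6.12]{Giu}: the $L^t$-to-$L^p$ passage for solutions is free once one has higher integrability, because one controls the bad set where $|\nabla w|$ is large using the reverse-H\"older inequality on a chain of balls. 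Then I would chain: $\Phi(\rho) \apprle \big(\fintegral_{B_\rho}|\nabla w|^{p+\tdelta_2}\big)^{1/(p+\tdelta_2)} \apprle \big(\fintegral_{B_{r/2}}|\nabla w|^{p-\tdelta_2}\big)^{1/(p-\tdelta_2)}$ by \eqref{higher-integrability}? — but that loses the decay factor, so instead I interpolate: write $|\nabla w|^{p-\tdelta_2}$ on $B_\rho$ using H\"older between $L^t$ and $L^p$ weights chosen so the $L^p$ part carries the $(\rho/r)^{\beta_0-1}$ from \eqref{holder-nablaw} and the remaining part is reabsorbed.

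The cleanest route, and the one I would actually write, is the following two-step comparison avoiding any delicate interpolation. Step one: for $s\le r/2$, apply \eqref{higher-integrability} on $B_s(z)\subset B_{2s}(z)$ to get $\big(\fintegral_{B_s}|\nabla w|^{p+\tdelta_2}\big)^{1/(p+\tdelta_2)} \apprle \big(\fintegral_{B_{2s}}|\nabla w|^{p-\tdelta_2}\big)^{1/(p-\tdelta_2)} \apprle \big(\fintegral_{B_{2s}}|\nabla w|^{p}\big)^{1/p}$, and then by \eqref{holder-nablaw} (radii $2s$ and $r$, valid once $2s\le r$, i.e. after shrinking) this is $\apprle (s/r)^{\beta_0-1}\big(\fintegral_{B_r}|\nabla w|^{p}\big)^{1/p} \apprle (s/r)^{\beta_0-1}\big(\fintegral_{B_r}|\nabla w|^{p+\tdelta_2}\big)^{1/(p+\tdelta_2)}$, the last step by \eqref{higher-integrability} with radii $r$ and $2r$ — except $2r$ need not be inside $B_{2R}$, so here I would instead only claim $\apprle (s/r)^{\beta_0-1}\big(\fintegral_{B_r}|\nabla w|^{t}\big)^{1/t}$ by plain H\"older, which is all that is needed. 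Step two: for the target radius $\rho$, by H\"older with the $L^{p+\tdelta_2}$ bound just proved on $B_\rho$ (take $s=\rho$),
\[
\left(\fintegral_{B_\rho(z)}|\nabla w|^{t}\right)^{1/t} \le \left(\fintegral_{B_\rho(z)}|\nabla w|^{p+\tdelta_2}\right)^{1/(p+\tdelta_2)} \apprle (\rho/r)^{\beta_0-1}\left(\fintegral_{B_r(z)}|\nabla w|^{t}\right)^{1/t},
\]
which is exactly the claim. The main obstacle is bookkeeping the radii so that every invocation of \eqref{higher-integrability} and \eqref{holder-nablaw} is on balls compactly contained in $B_{2R}(x_0)$; this is handled by first reducing to $\rho$ small relative to $r$ and $r$ small relative to $R$, and by noting that the constants in \eqref{higher-integrability}, \eqref{holder-nablaw} are scale-invariant so the dependence stays as claimed, with the extra parameter $t$ entering only through the H\"older step and the (finitely many, $t$-dependent) applications of Gehring's lemma. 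No new ideas beyond Theorem \ref{regularity} and Lemma \ref{holderint} are required.
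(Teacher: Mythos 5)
The idea in your first paragraph — use Giusti's Remark 6.12 to self-improve the reverse-H\"older inequality from \eqref{higher-integrability} into
\[
\left(\fintegral_{B_{s}(z)}|\nabla w|^{p}\,dx\right)^{1/p} \apprle \left(\fintegral_{B_{2s}(z)}|\nabla w|^{t}\,dx\right)^{1/t}, \qquad B_{2s}(z)\Subset B_{2R}(x_0),
\]
and combine it with the $L^p$ decay \eqref{holder-nablaw} — is exactly the route the paper indicates and it is correct. But your ``cleanest route,'' which you say you would actually write, contains a genuine error that makes the argument collapse. At the final step you assert that
\[
\left(\fintegral_{B_r}|\nabla w|^{p}\,dx\right)^{1/p} \apprle \left(\fintegral_{B_r}|\nabla w|^{t}\,dx\right)^{1/t} \quad \text{``by plain H\"older,''}
\]
for $t\le p$. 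H\"older's inequality gives precisely the \emph{opposite} inequality
\[
\left(\fintegral_{B_r}|\nabla w|^{t}\,dx\right)^{1/t} \leq \left(\fintegral_{B_r}|\nabla w|^{p}\,dx\right)^{1/p},
\]
and the step you want is a reverse-H\"older statement on a \emph{single} ball, which is the entire nontrivial content of the lemma; it is not free. Your step one therefore does not bound $\bigl(\fint_{B_\rho}|\nabla w|^{p+\tdelta_2}\bigr)^{1/(p+\tdelta_2)}$ by $(\rho/r)^{\beta_0-1}\bigl(\fint_{B_r}|\nabla w|^{t}\bigr)^{1/t}$, and step two inherits the gap.

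The correct repair is a small rechaining of radii using precisely the Giusti statement you quoted. Assume WLOG $\rho\leq r/2$ (otherwise the inequality is trivial by H\"older together with the volume ratio, since $(\rho/r)^{\beta_0-1}$ is then bounded below by $2^{1-\beta_0}$). Then
\[
\left(\fintegral_{B_\rho(z)}|\nabla w|^{t}\,dx\right)^{1/t}
\leq \left(\fintegral_{B_\rho(z)}|\nabla w|^{p}\,dx\right)^{1/p}
\apprle \left(\tfrac{\rho}{r/2}\right)^{\beta_0-1}\left(\fintegral_{B_{r/2}(z)}|\nabla w|^{p}\,dx\right)^{1/p}
\apprle \left(\tfrac{\rho}{r}\right)^{\beta_0-1}\left(\fintegral_{B_{r}(z)}|\nabla w|^{t}\,dx\right)^{1/t},
\]
where the middle step is \eqref{holder-nablaw} with radii $\rho$ and $r/2$ (both balls contained in $B_{2R}(x_0)$), and the last step is the Giusti self-improved reverse H\"older with radii $r/2$ and $r$ — which stays inside $B_{2R}(x_0)$ because $B_r(z)\Subset B_{2R}(x_0)$ by hypothesis, so the obstacle you worried about with $B_{2r}$ never arises. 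The extra factor $2^{\beta_0-1}\le 1$ is absorbed into the constant. The constant in the Giusti step depends on $t$ (and $n,p,\Lambda_0,\Lambda_1$ through $\tdelta_2$ and the reverse-H\"older constant), matching the stated dependence. In short: keep the first-paragraph strategy, drop the ``plain H\"older'' shortcut, and chain $\rho\to r/2\to r$ rather than $\rho\to r\to r$.
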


We shall now prove the following comparison estimate with exponents below the natural exponent $p$.
\begin{lemma} \label{DM}
Under \eqref{monotone}-\eqref{ellipticity},
let  $\delta \in (0, \min\{\tdelta_1, \tdelta_2\})$, where  $\tdelta_1$ and $\tdelta_2$ are 
as in Theorems \ref{iwaniec} and \ref{regularity}, respectively. With ${\bf f}\in L^{\pmd}(\Om)$, for any $u\in W_{0}^{1, p-\delta}(\Om)$ solving
\begin{equation}\label{rhs-f}
{\rm div}\, \aa(x, \nabla u) = {\rm div} \, |\bff|^{p-2} \bff,
\end{equation}
 and any
$w$ as in \eqref{firstapprox},  we have the following inequalities:
\begin{equation*}
\fintegral_{B_{2R}} |\nabla u-\nabla w|^{p-\delta}\, dx  \apprle   \delta^{\frac{p-\delta}{p-1}}  \fintegral_{B_{2R}} |{\nabla u }|^{p-\delta}\, dx  +    \fintegral_{B_{2R}} |{\bf f}|^{p-\delta}\, dx 
\end{equation*}
if $p\geq 2$ and
\beas
\fintegral_{B_{2R}} |\nabla u-\nabla w|^{p-\delta}\, dx  & \apprle  \, \delta^{p-\delta}  \fintegral_{B_{2R}} |\nabla u|^{p-\delta}\, dx  \, + \\
& \quad + \bigg{(}\fintegral_{B_{2R}} |\bff|^{p-\delta} \, dx\bigg{)}^{p-1} 
\bigg{(} \fintegral_{B_{2R}} |\nabla u|^{p-\delta} \, dx \bigg{)}^{2-p} \\
\enas
if $1< p< 2$. 
\end{lemma}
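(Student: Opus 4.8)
The strategy is to test the equation for the difference $u-w$ against $u-w$ itself, exploiting the monotonicity assumption \eqref{monotone} on the left and the fact that both $u$ and $w$ solve divergence-form equations with controllable right-hand sides. The key obstruction is that $u-w$ is only in $W_0^{1,p-\delta}(B_{2R})$, which is \emph{not} an admissible test function for equations whose weak formulation naturally requires test functions in $W_0^{1,p}$; this is precisely where the Nonlinear Hodge Decomposition (Theorem \ref{hodge}) enters. The plan is as follows.

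\emph{Step 1: Hodge decomposition of the test function.} Set $z=u-w\in W_0^{1,p-\delta}(B_{2R})$ and apply Theorem \ref{hodge} with $s=p-\delta$ and $\epsilon=\delta/(p-1-\delta)>0$ (so that $s/(1+\epsilon)=p-1$ when $\delta$ is chosen appropriately, or more precisely so that $|\nabla z|^{\epsilon}\nabla z$ lands in a space of exponent close to $(p-\delta)/(1+\epsilon)$). Write $|\nabla z|^{\epsilon}\nabla z=\nabla\phi+\mbfh$ with $\phi\in W_0^{1,(p-\delta)/(1+\epsilon)}(B_{2R})$ and $\|\mbfh\|_{L^{(p-\delta)/(1+\epsilon)}}\apprle |\epsilon|\,\|\nabla z\|_{L^{p-\delta}}^{1+\epsilon}$, noting $|\epsilon|\simeq\delta$. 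The function $\phi$ is now an admissible test function for the (very) weak formulations of \eqref{rhs-f} and \eqref{firstapprox}, since its gradient exponent $(p-\delta)/(1+\epsilon)$ can be taken to equal $p-1$, which pairs with $\aa(x,\nabla u),\aa(x,\nabla w)\in L^{(p-\delta)/(p-1)}$ (by \eqref{ellipticity-full}) and with $|\bff|^{p-2}\bff\in L^{(p-\delta)/(p-1)}$.

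\emph{Step 2: Exploit monotonicity.} Subtracting the two weak formulations and using $\nabla\phi=|\nabla z|^{\epsilon}\nabla z-\mbfh$ gives
\begin{equation*}
\int_{B_{2R}}\langle\aa(x,\nabla u)-\aa(x,\nabla w),\nabla u-\nabla w\rangle|\nabla z|^{\epsilon}\,dx
=\int_{B_{2R}}\langle\aa(x,\nabla u)-\aa(x,\nabla w),\mbfh\rangle\,dx
+\int_{B_{2R}}\langle|\bff|^{p-2}\bff,\nabla\phi\rangle\,dx.
\end{equation*}
Wait, one must be careful: the right-hand side datum only appears in the equation for $u$, not for $w$, so the last term is exactly $\int\langle|\bff|^{p-2}\bff,\nabla\phi\rangle\,dx$, and we then substitute $\nabla\phi=|\nabla z|^{\epsilon}\nabla z-\mbfh$ once more if convenient. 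By \eqref{monotone} the left-hand side is bounded below by $\Lambda_0\int(|\nabla u|^2+|\nabla w|^2)^{(p-2)/2}|\nabla z|^{2+\epsilon}\,dx$. For $p\geq 2$ this immediately dominates $\Lambda_0\int|\nabla z|^{p+\epsilon}\,dx\geq c\,(\fint|\nabla z|^{p-\delta})^{(p+\epsilon)/(p-\delta)}|B_{2R}|$ after Jensen (or one keeps it as $\int|\nabla z|^{p-\delta+\text{small}}$ and uses that $\epsilon+\delta$ is a tame exponent shift). For $1<p<2$ one does not get $|\nabla z|^{p}$ directly; instead one uses the standard trick $|\nabla z|^{p}\apprle(|\nabla u|^2+|\nabla w|^2)^{(p-2)/2}|\nabla z|^2\cdot(|\nabla u|^2+|\nabla w|^2)^{(2-p)/2}$ and Hölder, which is what produces the asymmetric product of averages with exponents $p-1$ and $2-p$ in the statement.

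\emph{Step 3: Estimate the error terms.} For the $\mbfh$-terms, use \eqref{ellipticity-full} to bound $|\aa(x,\nabla u)-\aa(x,\nabla w)|\apprle|\nabla u|^{p-1}+|\nabla w|^{p-1}$, apply Hölder with exponents $(p-\delta)/(p-1)$ and $(p-\delta)/(1+\epsilon)$ (these are conjugate after the choice of $\epsilon$), and invoke the Hodge bound $\|\mbfh\|\apprle\delta\|\nabla z\|_{L^{p-\delta}}^{1+\epsilon}$ to gain the crucial factor of $\delta$. One also needs the a priori control $\|\nabla w\|_{L^{p-\delta}(B_{2R})}\apprle\|\nabla u\|_{L^{p-\delta}(B_{2R})}$, which follows from Corollary \ref{iwaniec-exist} together with $\nabla z=\nabla u-\nabla w$, hence $\|\nabla z\|_{L^{p-\delta}}\apprle\|\nabla u\|_{L^{p-\delta}}$; this lets every occurrence of $\|\nabla z\|$ and $\|\nabla w\|$ be replaced by $\|\nabla u\|$. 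For the $\bff$-term, Hölder with the same exponents gives $\apprle\|\bff\|_{L^{p-\delta}}^{p-1}\|\nabla\phi\|_{L^{(p-\delta)/(1+\epsilon)}}$ and $\|\nabla\phi\|\apprle\|\nabla z\|_{L^{p-\delta}}^{1+\epsilon}+\|\mbfh\|\apprle\|\nabla u\|_{L^{p-\delta}}^{1+\epsilon}$. Collecting everything, dividing by $|B_{2R}|$ to pass to averages, and using Young's inequality to absorb the factor $\|\nabla u\|^{1+\epsilon}$ against a power of $\|\nabla u\|$ on the left (when $p\ge 2$), yields the two claimed inequalities; the exponent bookkeeping $\delta^{(p-\delta)/(p-1)}$ versus $\delta^{p-\delta}$ comes from whether the $\delta$ from the Hodge estimate is raised to the power $1$ (then Young-ed) or stays intact after the Hölder split.

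\emph{Main obstacle.} The delicate point is the exponent arithmetic: choosing $\epsilon=\epsilon(\delta,p)$ so that simultaneously (i) $\phi$ has gradient exponent exactly large enough to be tested against $\aa(x,\nabla u)\in L^{(p-\delta)/(p-1)}$ and $|\bff|^{p-2}\bff\in L^{(p-\delta)/(p-1)}$, (ii) the monotonicity term on the left still controls $\int|\nabla z|^{p-\delta}$ (up to a harmless higher power), and (iii) the smallness factor $|\epsilon|\simeq\delta$ from Theorem \ref{hodge} survives into the final constant with the right power. For $1<p<2$ there is the additional subtlety that the degenerate weight $(|\nabla u|^2+|\nabla w|^2)^{(p-2)/2}$ must be moved to the other side via Hölder, which is the source of the product structure $(\fint|\bff|^{p-\delta})^{p-1}(\fint|\nabla u|^{p-\delta})^{2-p}$ and requires care that the exponent $2-p$ acting on the $|\nabla u|$-average does not destroy the absorption step; here one does not absorb but simply leaves the term as is, since $2-p\in(0,1)$ keeps it subcritical. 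This is precisely the route taken in the measure-data literature (cf. the Duzaar--Mingione type comparison estimates referenced by the lemma's label \texttt{DM}), adapted here to the divergence-datum, below-natural-exponent setting via the Hodge decomposition.
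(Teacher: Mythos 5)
Your overall strategy is the right one --- use the nonlinear Hodge decomposition of $u-w$ to manufacture an admissible test function, subtract the two weak formulations, exploit monotonicity, and estimate the error terms coming from the divergence-free part $\mathcal{H}$ --- and it is essentially the route the paper takes. However, you have the sign of the Hodge parameter $\epsilon$ wrong, and this is not a cosmetic slip: it invalidates the admissibility step on which the whole argument rests.

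You choose $\epsilon=\delta/(p-1-\delta)>0$, so that $\phi\in W_0^{1,(p-\delta)/(1+\epsilon)}(B_{2R})$ with $(p-\delta)/(1+\epsilon)<p-\delta$, i.e.\ $\phi$ has \emph{lower} integrability than $u-w$, and you claim this pairs with $\aa(x,\nabla u)\in L^{(p-\delta)/(p-1)}$ because the exponent ``can be taken to equal $p-1$.'' But the H\"older conjugate of $(p-\delta)/(p-1)$ is $(p-\delta)/(1-\delta)$, which is \emph{strictly larger} than $p-\delta$ (and a fortiori larger than $p-1$) for any $\delta\in(0,1)$; one checks directly that $\frac{1}{p-1}+\frac{p-1}{p-\delta}\le 1$ is equivalent to $\delta(2-p)\ge 1$, which fails for all $1<p\le n$ and small $\delta$. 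So with your choice of $\epsilon$ the integrals $\int\aa(x,\nabla u)\cdot\nabla\phi\,dx$ and $\int|\bff|^{p-2}\bff\cdot\nabla\phi\,dx$ need not converge, and $\phi$ is not a legitimate test function. The purpose of the Hodge decomposition here is precisely to \emph{raise} the integrability of $\nabla\phi$ above $p-\delta$ up to the conjugate exponent; this forces a \emph{negative} $\epsilon$. The paper takes $\epsilon=-\delta\in(-1,0)$, giving $\phi\in W_0^{1,(p-\delta)/(1-\delta)}(B_{2R})$, which is exactly the conjugate of $(p-\delta)/(p-1)$, and $\|\mathcal{H}\|_{L^{(p-\delta)/(1-\delta)}}\apprle\delta\|\nabla(u-w)\|_{L^{p-\delta}}^{1-\delta}$, so $|\epsilon|=\delta$ still supplies the smallness factor you need.

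The wrong sign also damages Step 2. With $\epsilon=-\delta$, the monotonicity lower bound is
$\fint(|\nabla u|^2+|\nabla w|^2)^{\frac{p-2}{2}}|\nabla u-\nabla w|^{2-\delta}\,dx$,
and for $p\ge 2$ the weight dominates $|\nabla u-\nabla w|^{p-2}$, so you hit the target exponent $p-\delta$ exactly, with no need for Jensen or further interpolation; for $1<p<2$, H\"older with exponents $\frac{2-\delta}{p-\delta}$ and $\frac{2-\delta}{2-p}$ moves the degenerate weight across (as you anticipated), producing the $(\fint|\bff|^{p-\delta})^{p-1}(\fint|\nabla u|^{p-\delta})^{2-p}$ structure. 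With your $\epsilon>0$ you would instead end up with $\fint|\nabla u-\nabla w|^{p+\epsilon}$ on the low side, which overshoots $p-\delta$ and leads to the extra Jensen step you allude to; that step is not ``tame'': it introduces a strictly sublinear power of the quantity you are trying to estimate and would have to be closed by a separate a priori bound. Finally, note that your stated formula $\epsilon=\delta/(p-1-\delta)$ does not in fact give $s/(1+\epsilon)=p-1$: with $s=p-\delta$ one gets $s/(1+\epsilon)=(p-\delta)(p-1-\delta)/(p-1)\to p$ as $\delta\to0$, so even the internal exponent bookkeeping in Step 1 is inconsistent. Replacing $\epsilon$ by $-\delta$ and redoing Steps 1--3 accordingly repairs the proof and reproduces the paper's argument.
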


\begin{proof}
Let $\delta $ be as in the hypothesis.  Applying Theorem \ref{hodge} with $s=p-\delta$ and $\epsilon=-\delta$, we have
\begin{equation*}
|\nabla u - \nabla w|^{-\delta} (\nabla w - \nabla u) = \nabla \phi + {\mbfh}
\end{equation*}
in $B_{2R}$. Here $\phi\in W^{1,\frac{p-\delta}{1-\delta}}_{0}(B_{2R})$ and ${\mbfh}$ is a divergence free vector field with
\begin{equation}\label{Hbound}
\norm{{\mbfh}}_{L^{\frac{p-\delta}{1-\delta}}(B_{2R})} \apprle \delta \norm{\nabla u - \nabla w}^{1-\delta}_{L^{p-\delta}(B_{2R})}.
\end{equation}

  Using $\phi$ as a test function in \eqref{rhs-f} and \eqref{firstapprox}, we have 
\bea\label{II1I2}
  I &: = \fintegral_{B_{2R}} ( \aa(x,\nabla u) - \aa(x,\nabla w))\cdot  (\nabla w - \nabla u)  |\nabla w - \nabla u|^{-\delta} \, dx, \\
    &= I_1 + I_2 + I_3,
\ena
where we have set
\beas
  I_1 &:= \fintegral_{B_{2R}} ( \aa(x,\nabla u) - \aa(x,\nabla w)) \cdot {\mbfh}  \,  dx,  \\
 I_2 &:= \fintegral_{B_{2R}} |{\bf f}|^{p-2}  {\bf f} \cdot  (\nabla w - \nabla u)  |\nabla w - \nabla u|^{-\delta} \,dx,  \\
 I_3 &:= - \fintegral_{B_{2R}} |\bff|^{p-2}  \bff\cdot {\mbfh} \, dx.  \\
\enas

Applying the monotonicity  condition \eqref{monotone}, we have
\beas
  I  \apprge \fintegral_{B_{2R}}  (|\nabla u|^2 + |\nabla w|^2)^{\frac{p-2}{2}} |\nabla u-\nabla w|^{2-\delta} \, dx. 
\enas

Thus when $p\geq 2$ we can bound $I$ from below using the triangle inequality 
\begin{equation}\label{pmore2}
 I \apprge  \fintegral_{B_{2R}} |\nabla u - \nabla w|^{p-\delta} \, dx.
\end{equation}

For $1 < p < 2$, we have by H\"{o}lder's inequality with exponents $\frac{2-\delta}{p-\delta}$ and $\frac{2-\delta}{2-p}$, and Corollary  \ref{iwaniec-exist} that
\beas
\lefteqn{\fintegral_{B_{2R}} |\nabla u - \nabla w|^{p-\delta}\, dx}\\
 & \qquad = \fintegral_{B_{2R}} (|\nabla u|^2 + |\nabla w|^2)^{\frac{(p-\delta)(p-2)}{(2-\delta)2}+ \frac{(\delta-p)(p-2)}{(2-\delta)2}} \, |\nabla u - \nabla w|^{p-\delta} \, dx, \\
&\qquad \apprle  \bigg( \fintegral_{B_{2R}} (|\nabla u|^2 + |\nabla w|^2)^{\frac{p-2}{2}} |\nabla u - \nabla w|^{2-\delta} \, dx\bigg)^{\frac{p-\delta}{2-\delta}}\times\\
&\qquad \qquad \, \times \bigg( \fintegral_{B_{2R}} |\nabla u|^{p-\delta} \, dx\bigg)^{\frac{2-p}{2-\delta}}.
\enas
 This gives, when $1<p<2$, that
\begin{equation}\label{lessp}
\fintegral_{B_{2R}} |\nabla u - \nabla w|^{p-\delta}\, dx
\apprle  I^{\frac{p-\delta}{2-\delta}} \bigg( \fintegral_{B_{2R}} |\nabla u|^{p-\delta}\, dx \bigg)^{\frac{2-p}{2-\delta}}.
\end{equation}

We shall estimate $I_1$ from above by making use of H\"{o}lder's inequality along with \eqref{ellipticity-full},  \eqref{Hbound}, and 
Corollary  \ref{iwaniec-exist}  to obtain
\bea
\label{I1}
|I_1| & \leq \Lambda_1 \fintegral_{B_{2R}}  (|\nabla u|^{p-1} +|\nabla w|^{p-1}) |{\mbfh}| \, dx , \\
& \apprle   \delta \left( \fintegral_{B_{2R}} |\nabla u - \nabla w|^{p-\delta}\, dx \right)^{\frac{1-\delta}{p-\delta}} \left(  \fintegral_{B_{2R}} |\nabla u|^{p-\delta} \, dx \right) ^{\frac{p-1}{p-\delta}}.
\ena

We estimate  $I_2$ from above by using H\"{o}lder's inequality to obtain
\begin{equation}\label{I2}
 |I_2|  \leq \left(\fintegral_{B_{2R}} |\bff|^{p-\delta}  \, dx\right)^{\frac{p-1}{p-\delta}} \left( \fintegral_{B_{2R}} |\nabla u-\nabla w|^{p-\delta}\, dx \right)^{\frac{1-\delta}{p-\delta}}.
\end{equation}

Finally, for $I_3$, we combine H\"{o}lder's inequality with \eqref{Hbound} and obtain
\bea
\label{I3}
|I_3| & \leq  \fintegral_{B_{2R}} |\bff|^{p-1} |{\mbfh}|\, dx , \\
&\apprle  \delta \left( \fintegral_{B_{2R}} |\nabla u - \nabla w|^{p-\delta} \, dx\right)^{\frac{1-\delta}{p-\delta}} \left( \fintegral_{B_{2R}} |\bff|^{p-\delta}\, dx \right)^{\frac{p-1}{p-\delta}}.
\ena

At this point, combining estimates \eqref{I1}, \eqref{I2}, \eqref{I3}  with \eqref{II1I2} and \eqref{pmore2} we get the desired estimate when $p \geq 2$:
\begin{equation*}
 \fintegral_{B_{2R}} |\nabla u-\nabla w|^{p-\delta} \, dx  \apprle   \delta^{\frac{p-\delta}{p-1}}  \fintegral_{B_{2R}} |{ \nabla u }|^{p-\delta}\, dx  +    \fintegral_{B_{2R}} |{\bf f}|^{p-\delta}\, dx .
\end{equation*}

Likewise, for $1<p<2$, combining the estimates \eqref{I1}, \eqref{I2}, \eqref{I3}  with \eqref{II1I2} and \eqref{lessp}, we have
 \beas
\lefteqn{\fintegral_{B_{2R}} |\nabla u-\nabla w|^{p-\delta} \, dx}\\
& \qquad\qquad\apprle    \left\{  \delta \bigg{(} \fintegral_{B_{2R}} |\nabla u - \nabla w|^{p-\delta} \, dx\bigg{)}^{\frac{1-\delta}{p-\delta}} \bigg{(}  \fintegral_{B_{2R}} |\nabla u|^{p-\delta} \, dx \bigg{)} ^{\frac{p-1}{p-\delta}} \right.\\
&\qquad\qquad \, +  \bigg{(}\fintegral_{B_{2R}} |\bff|^{p-\delta} \, dx\bigg{)}^{\frac{p-1}{p-\delta}} \bigg{(} \fintegral_{B_{2R}} |\nabla u-\nabla w|^{p-\delta} \, dx \bigg{)}^{\frac{1-\delta}{p-\delta}}\\
& \qquad \qquad \, + \left. \delta\bigg{(} \fintegral_{B_{2R}} |\nabla u - \nabla w|^{p-\delta} \, dx\bigg{)}^{\frac{1-\delta}{p-\delta}} \bigg{(} \fintegral_{B_{2R}} |\bff|^{p-\delta} \, dx\bigg{)}^{\frac{p-1}{p-\delta}}   \right \} ^{\frac{p-\delta}{2-\delta}} \times\\
&\qquad \qquad \qquad\qquad\qquad \, \times \bigg{(} \fintegral_{B_{2R}} |\nabla u|^{p-\delta} \, dx \bigg{)}^{\frac{2-p}{2-\delta}}.
\enas

Simplifying the above inequality, we get the desired estimate for the case $1<p<2$:
\beas
  \fintegral_{B_{2R}} |\nabla u-\nabla w|^{p-\delta}\, dx  &\apprle \,   \delta^{p-\delta}   \fintegral_{B_{2R}} |\nabla u|^{p-\delta}\, dx  \, +\\
&\quad   + \bigg{(}\fintegral_{B_{2R}} |\bff|^{p-\delta} \, dx\bigg{)}^{p-1}  \bigg{(} \fintegral_{B_{2R}} |\nabla u|^{p-\delta} \, dx \bigg{)}^{2-p}.
\enas

This completes the proof of Lemma \ref{DM}.
\end{proof}

\section{Local boundary estimates} 
We now extend  the results of the previous section upto the boundary of a  domain whose complement is
uniformly $p$-thick. While the approach of \cite{IW} via nonlinear Hodge decomposition could be used upto the boundary of the domain, it requires 
that the boundary be sufficiently regular.  To overcome the roughness of the domain boundary, we shall employ the Lipschitz truncation method introduced in \cite{John}. Here some of the ideas of \cite{XZ} and the pointwise Hardy inequality obtained in \cite{PH} will  be useful for our purpose. 
On the other hand, it should be noted that the approach of this section could be modified to obtain, e.g., the local interior 
comparison estimate (Lemma \ref{DM}) that was previously derived  by means of the nonlinear Hodge decomposition.

We  start with some preliminary results. First we recall that 
an $\ba_s$ weight, $1<s<\infty$, is a non-negative function
$w \in L^{1}_{\rm loc}(\mathbb{R}^n{})$ such that  the quantity
\[
[w]_{s} := \sup \left (\fintegral_{B} w(x)\, dx\right )\left (\fintegral_{B} w(x)^{\frac{-1}{s-1}}\, dx\right )^{s-1}  < +\infty,
\]
where the supremum is taken over all balls $B \subset \mathbb{R}^{n}$. The quantity $[w]_{s}$ is referred to as the $\ba_{s}$ constant of $w$. 

A nonnegative function $w \in L^{1}_{\rm loc}(\mathbb{R}^n{})$ is called an $A_1$ weight if there exists a constant $A>0$ such that
$$\mathcal{M}(w)(x)\leq A\, w(x)$$
holds for a.e. $x\in\RR^n$. In this case $A$ is called an $A_1$ constant of $w$. 
Here $\mathcal{M}$ is the Hardy-Littlewood maximal function  defined for each $f \in L^{1}_{\rm loc}(\mathbb{R}^n{})$ by
$$\mathcal{M}(f)(x)=\sup_{r>0}\fint_{B_r(x)} |f(y)|dy, \qquad x\in\RR^n.$$

Beside the standard boundedness property of $\mathcal{M}$ on $L^s$ spaces, we also use the following property. Given a non-zero function
$f \in L^{1}_{\rm loc}(\mathbb{R}^n{})$ and a number $\beta\in(0,1)$, there holds $\mathcal{M}(f)^\beta\in A_1$ with an $A_1$ constant
depending only on $n$ and $\beta$. Moreover, if $\beta$ is away from $1$, say $\beta\leq 0.9$, then an $A_1$ constant can be chosen to be 
independent of $\beta$ (see, e.g., \cite{Tor} p. 229).

\begin{lemma}\label{Apqg} Let $\tilde{\Omega}$ is a bounded domain whose complement is uniformly $p$-thick with constants $r_0$ and $b>0$.
There exists a $\delta_0=\delta_0(n,p,b)\in (0, 1/2)$ such that the following holds for any $\delta\in (0, \delta_0/2)$. Let 
$v \in W^{1,p-\delta}_0 (\tilde{\Omega})$, $v\not\equiv0$, and extend $v$ by zero outside $\tilde\Om$. Define 
$$g(x) = \max \left\{ \mm(|\nabla v|^q)^{1/q}(x), \frac{|v(x)|}{d(x,\partial \tilde{\Om})} \right\},$$ 
where $q\in (p-\delta_0, p-2\delta]$ and $d(x,\partial \tilde{\Omega})$ is the distance of $x$ from  $\partial \tilde{\Omega}$. Then we have  
$g\simeq \mm(|\nabla v|^q)^{1/q}$ a.e. in $\RR^n$ and
\begin{equation}\label{gtilOm}
 \integral_{\tilde\Omega}  g^{p-\delta}\, dx \apprle  \integral_{\tilde\Omega}  |\nabla v|^{p-\delta} \, dx.
\end{equation}
Moreover, the function  $g^{-\delta}$ is in the Muckenhoupt class $\ba_{p/q}$ with $[g^{-\delta}]_{\ba_{p/q}}\leq C=C(n,p,b)$. 
\end{lemma}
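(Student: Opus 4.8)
The claim has three parts: (i) $g \simeq \mm(|\nabla v|^q)^{1/q}$ a.e., (ii) the integral bound \eqref{gtilOm}, and (iii) $g^{-\delta} \in \ba_{p/q}$ with a constant depending only on $n, p, b$. The key structural input is the pointwise Hardy inequality of \cite{PH}: for a domain whose complement is uniformly $p$-thick, there is an exponent $q_0 = q_0(n,p,b) < p$ such that for every $v \in W^{1,q}_0(\tilde\Om)$ with $q_0 < q \le p$ (extended by zero),
\[
\frac{|v(x)|}{d(x,\partial\tilde\Om)} \apprle \mm_{q}(|\nabla v|)(x) := \mm(|\nabla v|^{q})^{1/q}(x) \quad \text{a.e. } x.
\]
I would set $\delta_0$ so that $p - \delta_0 > q_0$ (and $\delta_0 < 1/2$); then for $q \in (p-\delta_0, p-2\delta]$ and $v \in W^{1,p-\delta}_0(\tilde\Om) \subset W^{1,q}_0(\tilde\Om)$ (note $q \le p-2\delta < p-\delta$, so this inclusion holds on the bounded domain), the pointwise Hardy inequality applies. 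This immediately gives $|v(x)|/d(x,\partial\tilde\Om) \apprle \mm(|\nabla v|^q)^{1/q}(x)$ a.e., hence $g \simeq \mm(|\nabla v|^q)^{1/q}$ a.e., proving (i). For (ii): since $p-\delta > q$ (because $\delta < \delta_0/2$ and $q \le p-2\delta$ forces $p-\delta-q \ge \delta > 0$), the maximal operator $\mm$ is bounded on $L^{(p-\delta)/q}(\RR^n)$, so
\[
\int_{\tilde\Om} g^{p-\delta}\,dx \apprle \int_{\RR^n} \mm(|\nabla v|^q)^{(p-\delta)/q}\,dx \apprle \int_{\RR^n} |\nabla v|^{p-\delta}\,dx = \int_{\tilde\Om}|\nabla v|^{p-\delta}\,dx,
\]
where the last equality uses that $v$ vanishes outside $\tilde\Om$ (and $\nabla v = 0$ a.e. there).

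For (iii), the cleanest route is the standard fact recalled in the paragraph before the lemma: for a non-zero $f \in L^1_{\rm loc}$ and $\beta \in (0,1)$, one has $\mm(f)^{\beta} \in A_1 \subset \ba_1 \subset \ba_s$ for all $s > 1$, with $A_1$ constant depending only on $n$ and $\beta$ — and \emph{independent of $\beta$} once $\beta$ is bounded away from $1$. Here I would apply this with $f = |\nabla v|^q$ and $\beta = \delta/q$. Since $q \in (p-\delta_0, p]$ and $\delta < \delta_0/2 < 1/4$, we have $\beta = \delta/q \le \delta/(p-\delta_0)$, which is small (certainly $\le 0.9$ after possibly shrinking $\delta_0$), so $\mm(|\nabla v|^q)^{\delta/q} \in A_1$ with constant $A = A(n)$ (uniform in $\delta$). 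Then, using part (i), $g^{-\delta} \simeq \mm(|\nabla v|^q)^{-\delta/q} = \big(\mm(|\nabla v|^q)^{\delta/q}\big)^{-1}$, i.e. $g^{-\delta}$ is comparable to the reciprocal of an $A_1$ weight. A reciprocal of an $A_1$ weight satisfies the reverse $\ba$-condition needed: if $h \in A_1$ then $h^{-1}$ satisfies $\big(\fint_B h^{-1}\big)\big(\fint_B (h^{-1})^{-1/(s-1)}\big)^{s-1}$, and since $(h^{-1})^{-1/(s-1)} = h^{1/(s-1)}$ and $\fint_B h^{1/(s-1)} \le (\fint_B h)^{1/(s-1)}$ by Jensen ($1/(s-1) \le 1$ when $s \ge 2$; for $1 < s < 2$ one uses instead that $h^{-1}$ is in $\ba_s$ for $h\in A_1$ directly, which is classical), one reduces to $\big(\fint_B h^{-1}\big)\fint_B h \apprle [h]_{A_1}$, which is exactly the $\ba_2 \subset \ba_s$-type control. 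More simply: it is classical that $h \in A_1$ implies $h^{-1} \in \ba_s$ for every $s > 1$ with $[h^{-1}]_{\ba_s} \le C(s)\,[h]_{A_1}$. Applying this with $s = p/q$ (note $p/q > 1$ since $q < p$, and $p/q$ is bounded above by $p/(p-\delta_0)$, a constant depending only on $n,p,b$) yields $[g^{-\delta}]_{\ba_{p/q}} \le C(n,p,b)$, completing (iii).

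\textbf{Main obstacles.} The substantive ingredient is the pointwise Hardy inequality from \cite{PH} and the precise range of exponents below $p$ for which it holds on uniformly $p$-thick domains; I would need to cite it in the form above and verify the self-improvement exponent $q_0(n,p,b)$ is the one governing the choice of $\delta_0$. The second delicate point is tracking \emph{uniformity of constants}: one must check that $A_1$ and $\ba_{p/q}$ constants can be taken to depend only on $n, p, b$ and not on $\delta$ or on $v$ — this works precisely because $\beta = \delta/q$ stays bounded away from $1$ (giving a $\delta$-independent $A_1$ constant via \cite{Tor}) and $p/q$ stays in a fixed compact subinterval of $(1,\infty)$. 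Everything else — the inclusion $W^{1,p-\delta}_0 \subset W^{1,q}_0$ on a bounded domain, boundedness of $\mm$ on $L^{(p-\delta)/q}$, and the passage from $A_1$ to reciprocal-$\ba_s$ — is routine.
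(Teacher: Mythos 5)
Parts (i) and (ii) of your proof are correct and coincide with the paper's argument: the pointwise Hardy inequality of \cite{PH} for uniformly $p$-thick complements gives $g \simeq \mm(|\nabla v|^q)^{1/q}$ a.e., and the boundedness of $\mm$ on $L^{(p-\delta)/q}$ (valid since $p-\delta>q$) yields \eqref{gtilOm}.

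Part (iii) contains a genuine gap. The fact you call ``classical,'' that $h\in A_1$ implies $h^{-1}\in \ba_s$ for every $s>1$, is false: $h(x)=|x|^{-n/2}$ is in $A_1$, but $h^{-1}(x)=|x|^{n/2}$ belongs to $\ba_s$ only when $s>3/2$. The correct statement, from factorization of $\ba_p$ weights, is that $w\in A_1$ implies $w^{1-s}\in \ba_s$; equivalently, $h^{-1}\in \ba_s$ is guaranteed by $h^{1/(s-1)}\in A_1$, not by $h\in A_1$. Your Jensen fallback for $s\ge 2$ never applies here: with $s=p/q$ and $q>p-\delta_0$ one has $s<p/(p-\delta_0)<2$ (since $\delta_0<1/2<p/2$), so the case $1<s<2$ is the \emph{only} one that occurs. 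In your setting $h=\mm(|\nabla v|^q)^{\delta/q}$ and $s=p/q$, hence the power that actually needs to be an $A_1$ weight is $h^{1/(s-1)}=\mm(|\nabla v|^q)^{\delta/(p-q)}$, which requires $\delta/(p-q)<1$. The hypothesis $q\le p-2\delta$ is exactly what guarantees $\delta/(p-q)\le 1/2$ and a uniform $A_1$ constant; your argument never invokes this constraint, which is where the content lies. The paper verifies the $\ba_{p/q}$ condition directly: it bounds $\fint_B g^{-\delta}$ by $\bigl(\inf_B \mm(|\nabla v|^q)\bigr)^{-\delta/q}$ (pointwise domination by the infimum, no $A_1$ input needed for this factor) and bounds $\bigl(\fint_B g^{\delta q/(p-q)}\,dx\bigr)^{(p-q)/q}$ using the $A_1$ property of $\mm(|\nabla v|^q)^{\delta/(p-q)}$; the two factors cancel. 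Your route is salvageable by replacing the false general claim with the factorization $w\in A_1\Rightarrow w^{1-s}\in \ba_s$ applied to $w=\mm(|\nabla v|^q)^{\delta/(p-q)}$, and by checking that $\delta/(p-q)\le 1/2$.
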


\begin{proof} As $\tilde{\Omega}^c$ is uniformly $p$-thick, it is also  uniformly $p_0$-thick for some $1<p_0<p$ with $p_0=p_0(n, p, b)$ (see \cite{Le88}). Moreover,
there exists a constant $\delta_0=\delta_0(n,p,b)\in (0, 1/2)$ with $p-\delta_0\geq p_0$ such that for $q\in (p-\delta_0, p-2\delta]$, where $\delta\in (0, \delta_0/2)$, the  pointwise Hardy inequality
$$\frac{|v(x)|}{d(x,\partial \tilde{\Om})}\apprle \mm(|\nabla v|^q)^{1/q}(x)$$
holds for a.e. $x\in\tilde\Om$ (see \cite{PH}). It follows that  $g(x) \simeq \mm(|\nabla v|^q)^{1/q}(x)$ for a.e. $x\in\RR^n$. Thus by the boundedness of the Hardy-Littlewood maximal function 
$\mm$ we obtain inequality \eqref{gtilOm}. Moreover, for any ball $B\subset\RR^n$ we have 
\beas
 \lefteqn{\fint_{B} g^{-\delta}\, dx \left(\fint_{B} g^{\frac{\delta q}{p-q}}\, dx\right)^{\frac{p-q}{q}}}\\
 &\qquad \apprle \fintegral_{B} \mm(|\nabla v|^q )^{-\delta/q} \, dx 
\left(\fintegral_{B} \mm(|\nabla v|^q )^{\frac{\delta}{p-q}} \, dx \right)^{\frac{p-q}{q}}\\
&\qquad\apprle \left\{ \inf\limits_{y \in B} \mm(|\nabla v|^q )(y) \right\}^{-\delta/q} \left\{ \inf\limits_{y \in B} \mm(|\nabla v|^q )(y) \right\}^{\delta/q}\\
&\qquad\leq C. 
\enas
Here we  used  that the function $\mm(|\nabla v|^q )^{\frac{\delta}{p-q}} $ is an $\ba_1$ weight since $\delta/(p-q)\leq 1/2<1$ (see, e.g., \cite{Tor} p. 229).
\end{proof}

We now present an extension lemma which can be found in \cite{XZ}. For the sake of completeness, we give the proof.
\begin{lemma} \label{extension-theorem}
Let $v \in W^{1,s}_0 (\tilde{\Omega})$, $s\geq 1$, where $\tilde{\Omega}$ is a bounded domain and let $\lambda > 0$. Extend $v$ by zero outside $\tilde{\Omega}$ and set
 \bea
\label{flambda-1}
F_{\lambda}(v, \tilde{\Om}) = \left\{ x \in \tilde{\Omega}: \mm (|\nabla v|^s)^{1/s}(x) \leq \lambda, |v(x) | \leq \lambda d(x,\partial \tilde{\Omega}) \right\},
\ena
where $d(x,\partial \tilde{\Omega})$ is the distance of $x$ from  $\partial \tilde{\Omega}$.
Then there exists a $c\lambda$-Lipschitz function $v_\lambda$ defined  on $\RR^n$ with $c=c(n)\geq 1$ and the following properties: 
\begin{itemize}
\item $v_{\lambda}(x) = v(x) $ and $\nabla v_{\lambda}(x) = \nabla v(x)$ for a.e. $x \in F_{\lambda}$;
\item $v_{\lambda}(x) = 0$ for every $ x\in \tilde{\Omega}^c$; and
\item $|\nabla v_{\lambda}(x)| \leq c(n) \lambda$ for a.e. $x \in \mathbb{R}^n$.
\end{itemize} 
\end{lemma}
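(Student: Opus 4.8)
The plan is to construct the Lipschitz truncation $v_\lambda$ via the classical McShane–Whitney extension applied to a truncation of $v$ away from the ``bad set,'' and then verify the three listed properties. First I would record the standard characterization of the good set in terms of pointwise Lipschitz behavior: for $x, y \in F_\lambda(v,\tilde\Om)$, the inequality $|v(x)-v(y)| \le c(n)\lambda\,|x-y|$ holds, and moreover for $x\in F_\lambda$ and $y\in\tilde\Om^c$ one has $|v(x)-v(y)| = |v(x)| \le \lambda\, d(x,\partial\tilde\Om)\le \lambda|x-y|$ since $v\equiv 0$ outside $\tilde\Om$. The first of these is the well-known consequence of the maximal function bound on $|\nabla v|^s$: writing $v$ via the representation formula on balls and using $\mathcal M(|\nabla v|^s)^{1/s}(x),\mathcal M(|\nabla v|^s)^{1/s}(y)\le\lambda$, one controls $|v(x)-v(x,y)|$ where $v(x,y)$ is an average over an intermediate ball, giving Lipschitz control with constant $c(n)\lambda$ at Lebesgue points (which is a.e. point). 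This lets me conclude that the restriction $v|_{F_\lambda \cup \tilde\Om^c}$ is $c(n)\lambda$-Lipschitz on $F_\lambda\cup\tilde\Om^c$, up to modification on a null set.

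Next I would set $v_\lambda$ to be the McShane extension of this restriction to all of $\RR^n$, namely
\begin{equation*}
v_\lambda(x) := \inf_{z\in F_\lambda\cup\tilde\Om^c}\left\{ v(z) + c(n)\lambda\,|x-z|\right\},
\end{equation*}
which is globally $c(n)\lambda$-Lipschitz on $\RR^n$ and agrees with $v$ on $F_\lambda\cup\tilde\Om^c$; in particular $v_\lambda \equiv 0$ on $\tilde\Om^c$ since $v\equiv 0$ there. The bound $|\nabla v_\lambda|\le c(n)\lambda$ a.e. is then immediate from Rademacher's theorem applied to a globally Lipschitz function. The remaining and slightly more delicate point is the identity $\nabla v_\lambda(x) = \nabla v(x)$ for a.e. $x\in F_\lambda$: since $v_\lambda = v$ on $F_\lambda$, at every point $x$ of density $1$ of $F_\lambda$ which is simultaneously a point of approximate differentiability of $v$ and a differentiability point of $v_\lambda$ (a.e. such point exists, the first two by $v\in W^{1,s}$ hence approximately differentiable a.e., the third by Rademacher), the two gradients must coincide because the two functions agree on a set of density $1$ at $x$. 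This is the standard argument and I would just cite it or spell it out in a sentence.

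The main obstacle, such as it is, is the verification of the pointwise Lipschitz estimate on the good set from the maximal-function bounds — this is where the real analysis happens, and one has to be careful that it holds at a.e. point rather than everywhere, and that the constant depends only on $n$ (not on $s\ge 1$, since the exponent $s$ only enters through $|\nabla v|^s$ and $(\mathcal M(|\nabla v|^s))^{1/s}$ dominates $\mathcal M(|\nabla v|)$ when $s\ge 1$ by Jensen, so one reduces to the case $s=1$). The boundary estimate (comparing $x\in F_\lambda$ with the zero values outside $\tilde\Om$) is immediate from the defining condition $|v(x)|\le\lambda d(x,\partial\tilde\Om)$ in \eqref{flambda-1} and requires no work. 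Since all of this is classical and is the content of the cited reference \cite{XZ}, I would present the construction above, prove the pointwise Lipschitz estimate on $F_\lambda$ in a few lines using the ball-averaging representation of Sobolev functions, and then invoke McShane extension and Rademacher to close out the three bullet points.
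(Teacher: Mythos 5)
Your proposal follows essentially the same route as the paper: establish the pointwise Lipschitz estimate on $F_\lambda$ via the Hedberg-type maximal-function inequality (the paper cites \cite{LIH} directly, you propose to rederive it from ball averages, which is the same argument), handle the pairs $x\in F_\lambda$, $y\in\tilde\Omega^c$ using the Hardy-type condition $|v(x)|\le\lambda\,d(x,\partial\tilde\Omega)$ built into \eqref{flambda-1}, and then extend by McShane/Kirszbraun. You actually spell out slightly more than the paper does, namely the density-point argument for $\nabla v_\lambda=\nabla v$ a.e.\ on $F_\lambda$, which the paper leaves implicit.
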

\begin{proof} Given the hypothesis of the lemma, there exists a  set $N \subset\mathbb{R}^n$ with $|N|=0$ such that 
\bea
\label{estimate-1}
|v(x) - v(y)| \leq c\, |x-y| [\mm(|\nabla v|^s)^{1/s} (x) + \mm(|\nabla v|^s)^{1/s} (y)]
\ena
holds for every $x$, $y \in \mathbb{R}^n \setminus N$. The proof of inequality \eqref{estimate-1} is due to L. I. Hedberg which can be found in \cite{LIH}. It is then easy to show that $v_{|_{(\flam\setminus N) \cup \tilde{\Omega}^c} }$ is a $c\lambda$-Lipschitz continuous function for some $c(n) \geq 1$. Indeed,
in the case when $x,y \in \flam\setminus N$, then by using \eqref{flambda-1} in \eqref{estimate-1}, we see that
\beas
|v(x) - v(y)| &\leq c\, |x-y| [\mm(|\nabla v|^q)^{1/q} (x) + \mm(|\nabla v|^q)^{1/q} (y)] \\
& \leq 2c\, \lambda |x-y|.
\enas
On the other hand, if $x \in \flam\setminus N$ but $y \in \tilde{\Omega}^c$, by making use of \eqref{flambda-1}, we observe that
\beas
|v(x) - v(y)| = |v(x)| \leq \lambda d(x,\partial \tilde{\Omega}) \leq \lambda |x-y|. 
\enas

We can now extend $v_{|_{\flam\setminus N)\cup \tilde{\Omega}^c}}$ to a Lipschitz continuous function $\vlam$ on the whole $\mathbb{R}^n$ with the same Lipschitz constant by the classical Kirszbraun-McShane extension theorem (see, e.g., \cite[p. 80]{EG}). This extension satisfies all the properties highlighted in this lemma. 
\end{proof}

We next state a generalized Sobolev-Poincar\'e's inequality which was originally obtained by V. Maz'ya \cite[Sec. 10.1.2]{Maz}. 
See also    \cite[Sec. 3.1]{KK} and \cite[Corollary 8.2.7]{AH}.
\begin{theorem} \label{sobolev-poincare} Let $B$ be a ball and $\phi \in W^{1,s}(B)$ be $s$-quasicontinuous, with $s>1$. Let $\kappa=n/(n-s)$ if $1<s<n$ and $\kappa=2$ if 
$s=n$. Then there exists a constant $c = c(n, s) > 0$ such that 
\begin{equation*}
 \left( \fintegral_B |\phi|^{\kappa s} \, dx\right)^{\frac{1}{\kappa s}} \leq c \left( 
\frac{1}{{\rm cap}_s (N(\phi), 2B ) } \integral_B |\nabla \phi|^s \, dx\right)^{\frac{1}{s}},
\end{equation*}
where $N(\phi)=\{x \in B: \phi (x) = 0 \}$.
\end{theorem}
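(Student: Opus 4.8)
The plan is to reduce by scaling to the case $B=B_1(0)$ and then to combine the classical mean-zero Sobolev--Poincar\'e inequality with a capacitary estimate for the mean value $\overline{\phi}_{B_1}$. The scaling reduction is routine: under $\phi(x)\mapsto\phi(rx)$ the quantity $\fintegral_{B}|\phi|^{\kappa s}$ is invariant while both $\integral_{B}|\nabla\phi|^{s}$ and ${\rm cap}_s(N(\phi),2B)$ carry the factor $r^{n-s}$, so the inequality is scale invariant and we may take $B=B_1$. Set $c:=\overline{\phi}_{B_1}$. The classical mean-zero Sobolev--Poincar\'e inequality on the unit ball gives $\|\phi-c\|_{L^{\kappa s}(B_1)}\apprle\|\nabla\phi\|_{L^{s}(B_1)}$: for $1<s<n$ this is the endpoint case $\kappa s=ns/(n-s)$, while for $s=n$ one uses the Poincar\'e inequality $\|\phi-c\|_{L^{q}(B_1)}\apprle\|\nabla\phi\|_{L^{n}(B_1)}$, valid for every finite exponent $q$ and in particular for $q=2n=\kappa s$. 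By the triangle inequality $\|\phi\|_{L^{\kappa s}(B_1)}\leq\|\phi-c\|_{L^{\kappa s}(B_1)}+|c|\,|B_1|^{1/(\kappa s)}$, so it will suffice to prove $|c|\apprle\big({\rm cap}_s(N(\phi),2B_1)^{-1}\integral_{B_1}|\nabla\phi|^{s}\big)^{1/s}$; the remaining $\|\nabla\phi\|_{L^s(B_1)}$ term then gets absorbed into the right-hand side via the trivial monotonicity bound ${\rm cap}_s(N(\phi),2B_1)\leq{\rm cap}_s(\overline{B_1},2B_1)=c(n,s)$.

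To bound $|c|$ --- we may assume $c\neq0$, since $c=0$ is already contained in the Sobolev--Poincar\'e estimate --- I would build an admissible competitor for ${\rm cap}_s(N(\phi),2B_1)$ out of $\phi$ itself. Since $\phi$ lives only on $B_1$, first extend: from Sobolev--Poincar\'e together with H\"older one gets $\|\phi-c\|_{W^{1,s}(B_1)}\apprle\|\nabla\phi\|_{L^s(B_1)}$, and because $B_1$ is a smooth (hence $W^{1,s}$-extension) domain, $\phi-c$ extends to $g\in W^{1,s}(\tfrac32 B_1)$ with $\|g\|_{W^{1,s}(\frac32 B_1)}\apprle\|\nabla\phi\|_{L^s(B_1)}$; then $\Phi:=g+c$ agrees with $\phi$ on $B_1$ and satisfies $\|\nabla\Phi\|_{L^s(\frac32 B_1)}+\|\Phi-c\|_{L^{\kappa s}(\frac32 B_1)}\apprle\|\nabla\phi\|_{L^s(B_1)}$. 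Now fix $\eta\in C_c^\infty(\tfrac32 B_1)$ with $\eta\equiv1$ on $\overline{B_1}$, $0\leq\eta\leq1$, $|\nabla\eta|\apprle1$, and set $\psi:=\min\{\max\{\eta(c-\Phi)/c,\,0\},\,1\}$. Then $\psi\in W_0^{1,s}(2B_1)$, $\psi$ is $s$-quasicontinuous, and $\psi=1$ quasi-everywhere on $N(\phi)$ (there $\eta\equiv1$ and $\Phi=\phi=0$ q.e.), so $\psi$ is admissible and ${\rm cap}_s(N(\phi),2B_1)\leq\integral_{2B_1}|\nabla\psi|^{s}\,dx$. Since $|\nabla\psi|$ is bounded a.e. by $|\tfrac{c-\Phi}{c}\nabla\eta-\tfrac{\eta}{c}\nabla\Phi|$, and $\nabla\eta$ is supported in the annulus $\tfrac32 B_1\setminus B_1$, one obtains ${\rm cap}_s(N(\phi),2B_1)\apprle|c|^{-s}\big(\|\Phi-c\|_{L^{s}(\frac32 B_1)}^{s}+\|\nabla\Phi\|_{L^s(\frac32 B_1)}^{s}\big)\apprle|c|^{-s}\|\nabla\phi\|_{L^s(B_1)}^{s}$, which rearranges to the desired bound on $|c|$. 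Assembling this with the Sobolev--Poincar\'e step, the absorption remark, and the scaling proves the theorem.

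The one point that needs genuine care --- the \emph{main obstacle} --- is the admissibility of $\psi$ for ${\rm cap}_s$ as defined in the text: there the capacity is computed with nonnegative $C_c^\infty(2B_1)$ functions that are $\geq1$ on a compact set, whereas $\psi$ is a truncated $W_0^{1,s}(2B_1)$ function equal to $1$ only quasi-everywhere on the possibly non-compact, merely quasi-closed set $N(\phi)$. To pass between the two one invokes the standard identification of the condenser $s$-capacity with its Sobolev-space version --- via density of smooth functions, quasicontinuity of $\phi$ (hence of $\psi$), and outer regularity of capacity --- for which I would cite \cite{HKM} and \cite{AH}. Everything else (the scaling bookkeeping, the extension estimate, the H\"older manipulations, and the Sobolev embeddings, only the crude $L^{2n}$ one being needed when $s=n$) is routine.
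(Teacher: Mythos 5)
The paper does not give a proof of this theorem; it cites Maz'ya \cite[Sec.~10.1.2]{Maz}, Kilpel\"ainen--Koskela \cite[Sec.~3.1]{KK}, and Adams--Hedberg \cite[Corollary~8.2.7]{AH}. Your argument is correct and is essentially the proof found in those sources: reduce to the unit ball by scaling, apply the mean-zero Sobolev--Poincar\'e inequality, and then estimate the mean $c=\overline{\phi}_{B_1}$ from above via a capacitary competitor built from $\phi$. Every step checks out: the scale invariance of both sides; the endpoint embedding for $1<s<n$ and the crude $L^{2n}$ Poincar\'e bound for $s=n$; the absorption of the residual $\|\nabla\phi\|_{L^s}$ term by monotonicity ${\rm cap}_s(N(\phi),2B_1)\leq{\rm cap}_s(\overline{B_1},2B_1)=c(n,s)$, which also handles $c=0$; the $W^{1,s}$ extension across $\partial B_1$ needed because $\eta$ must equal $1$ on all of $\overline{B_1}$ while $\phi$ lives only on $B_1$; and the gradient bound on the truncated competitor $\psi$. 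You are right that the only delicate point is the admissibility of $\psi$ for ${\rm cap}_s$ as defined in the paper (nonnegative $C_c^\infty$ test functions, $\geq1$ on a compact set) versus the Sobolev-space formulation ($W_0^{1,s}$, $\geq 1$ q.e.\ on the quasi-closed set $N(\phi)$); that equivalence is indeed standard, and \cite{HKM} and \cite{AH} are the right references. Two small points worth stating explicitly in a polished write-up: (i) take a quasicontinuous representative of the extended function $\Phi$, so that $\Phi=\phi$ q.e.\ on $B_1$ (not merely a.e.), which is what makes $\psi=1$ q.e.\ on $N(\phi)$; and (ii) the $\nabla\eta$ term contributes $\|\Phi-c\|_{L^s(\frac32 B_1\setminus B_1)}$, which you correctly dominate via the $L^{\kappa s}$ extension bound and H\"older. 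Neither affects the validity of the argument.
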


The following estimate with exponents below the natural one has been known only for regular domains (see \cite{IW}). Here, for the first time,   it is 
obtained for domains with $p$-thick complements.  

\begin{theorem}\label{appriori-boundary}
Suppose that $\aa$ satisfies \eqref{monotone}-\eqref{ellipticity}.
 Let $\tilde{\Omega}$ be a bounded domain whose complement is uniformly $p$-thick with constants $r_0$ and $b>0$.
Then there exists a constant $\delta_1 = \delta_1(n,p,b,\Lambda_0,\Lambda_1, \gamma)\in (0, \delta_0/2]$, with $\delta_0$ as in Lemma
\ref{Apqg}, such that the following holds for any $\delta \in (0, \delta_1)$.
Given any $\hh , \bff \in L^{p-\delta} (\tilde{\Omega})$ and any very weak solution
$w \in W^{1,{p-\delta}}_0 (\tilde{\Omega})$ to equation
\bea \label{eqb-1}
{\rm div}\, \aa(x,\hh+\nabla w) = {\rm div} \, |\bff|^{p-2} \bff,
\ena
there holds
\begin{equation}
 \label{eqb-12}
\integral_{\tilde{\Omega}} |\nabla w|^{p-\delta} \, dx  \leq C \integral_{\tilde{\Omega}} \left( |\hh(x)|^{p-\delta} + |\bff(x)|^{p-\delta} \right) \, dx,
\end{equation}
with a constant $C = C(n,p,b,\Lambda_0,\Lambda_1, \gamma)$.
\end{theorem}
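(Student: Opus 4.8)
The plan is to mimic the classical Iwaniec--Sbordone / Iwaniec--Weiss argument (Theorem~\ref{iwaniec}) but replace the nonlinear Hodge decomposition of $w$ by the Lipschitz truncation of Lemma~\ref{extension-theorem}, so that the zero boundary condition can be exploited even when $\partial\tilde\Om$ is merely $p$-thick. Write $v=w$, extend it by zero outside $\tilde\Om$, and fix an exponent $q\in(p-\delta_0,\,p-2\delta]$ as in Lemma~\ref{Apqg}; set $g=\max\{\mm(|\nabla w|^q)^{1/q},\,|w|/d(\cdot,\partial\tilde\Om)\}$, so by Lemma~\ref{Apqg} we have $g\simeq \mm(|\nabla w|^q)^{1/q}$, $\int_{\tilde\Om}g^{p-\delta}\apprle\int_{\tilde\Om}|\nabla w|^{p-\delta}$, and $g^{-\delta}\in\ba_{p/q}$ with constant depending only on $n,p,b$. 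For each $\lambda>0$ let $v_\lambda$ be the $c\lambda$-Lipschitz truncation of Lemma~\ref{extension-theorem} (with $s=q$) associated with the good set $F_\lambda=F_\lambda(w,\tilde\Om)$, which agrees with $w$ and $\nabla w$ on $F_\lambda$, vanishes on $\tilde\Om^c$, and satisfies $|\nabla v_\lambda|\le c(n)\lambda$.

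First I would use $\varphi=v_\lambda$ as a test function in \eqref{eqb-1}: since $v_\lambda\in W^{1,\infty}_0(\tilde\Om)$ (after noting $v_\lambda=0$ on $\tilde\Om^c$ and has bounded support because $\tilde\Om$ is bounded), it is an admissible test function, giving
\[
\int_{\tilde\Om}\aa(x,\hh+\nabla w)\cdot\nabla v_\lambda\,dx=\int_{\tilde\Om}|\bff|^{p-2}\bff\cdot\nabla v_\lambda\,dx.
\]
On $F_\lambda$ one has $\nabla v_\lambda=\nabla w$; on $\tilde\Om\setminus F_\lambda$ one uses $|\nabla v_\lambda|\le c\lambda$ together with the growth bound \eqref{ellipticity-full} applied to $\aa(x,\hh+\nabla w)$ and the trivial bound $|\hh+\nabla w|^{p-1}\le c(|\hh|^{p-1}+|\nabla w|^{p-1})$. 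The monotonicity \eqref{monotone} applied on $F_\lambda$ (comparing $\hh+\nabla w$ with $\hh$, i.e. splitting $\aa(x,\hh+\nabla w)=[\aa(x,\hh+\nabla w)-\aa(x,\hh)]+\aa(x,\hh)$ and using $\langle\aa(x,\hh+\nabla w)-\aa(x,\hh),\nabla w\rangle\gtrsim(|\hh+\nabla w|^2+|\hh|^2)^{(p-2)/2}|\nabla w|^2$, which dominates $|\nabla w|^p$ when $p\ge2$ and is handled by a Hölder trick as in Lemma~\ref{DM} when $1<p<2$) yields the lower bound
\[
\int_{F_\lambda}|\nabla w|^p\,dx\apprle \lambda^{p-1}\!\!\int_{\tilde\Om\setminus F_\lambda}\!\!\big(|\hh|^{p-1}+|\nabla w|^{p-1}\big)\,dx+\int_{\tilde\Om}|\bff|^{p-1}\big(|\nabla v_\lambda|+|\bff|^{p-1}\text{-type terms}\big)\,dx,
\]
after absorbing the $\aa(x,\hh)\cdot\nabla w$ contribution on the left/into the right via Young's inequality; here I would also insert the Sobolev--Poincaré inequality of Theorem~\ref{sobolev-poincare} on balls centered at $\partial\tilde\Om$ together with the $p$-thickness lower bound ${\rm cap}_q(\tilde\Om^c\cap\overline{B_r},B_{2r})\gtrsim r^{n-q}$ to control the solid integral of $|w|^q/d^q$ by $\int|\nabla w|^q$ — this is precisely where $p$-thickness (equivalently $q$-thickness for $q=p-\delta$, via \cite{Le88}) enters, exactly as encoded in Lemma~\ref{Apqg}.

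The decisive step is the weighted "good-$\lambda$" / layer-cake passage: multiply the pointwise inequality above by $\lambda^{-1-\delta}$ and integrate in $\lambda$ over $(0,\infty)$. Using the identity $\int_0^\infty \lambda^{-1-\delta}\big(\int_{F_\lambda}h\,dx\big)\,d\lambda\simeq \delta^{-1}\int_{\tilde\Om}h(x)\,g(x)^{-\delta}\,dx$ (valid because $x\in F_\lambda\iff g(x)\le\lambda$ up to the equivalence $g\simeq\mm(|\nabla w|^q)^{1/q}$) on the left with $h=|\nabla w|^p$, and the companion identity $\int_0^\infty\lambda^{p-2-\delta}\big(\int_{\tilde\Om\setminus F_\lambda}h\,dx\big)d\lambda\simeq (p-1-\delta)^{-1}\int_{\tilde\Om}h\,g^{p-1-\delta}\,dx$ on the right, one arrives at
\[
\int_{\tilde\Om}|\nabla w|^p g^{-\delta}\,dx\apprle \frac{\delta}{p-1-\delta}\int_{\tilde\Om}\big(|\hh|^{p-1}+|\nabla w|^{p-1}\big)g^{p-1-\delta}\,dx+\int_{\tilde\Om}\big(|\bff|^{p-\delta}+|\hh|^{p-\delta}\big)\,dx.
\]
Now since $g\simeq\mm(|\nabla w|^q)^{1/q}\ge |\nabla w|$ a.e. (Lebesgue differentiation), $g^{p-1-\delta}\le g^{-\delta}g^{p-1}$ and one bounds $|\nabla w|^{p-1}g^{p-1-\delta}\le |\nabla w|^{-1}g^{p-1-\delta}\cdot|\nabla w|^p\le g^{-\delta}|\nabla w|^p$ wait — more cleanly: $|\nabla w|^{p-1}g^{p-1-\delta}\le g^{p-1}g^{p-1-\delta}=g^{2p-2-\delta}$ is not directly absorbable, so instead I would keep $|\nabla w|^p g^{-\delta}$ on the left and estimate the right-hand $|\nabla w|^{p-1}g^{p-1-\delta}$ via $|\nabla w|^{p-1}g^{p-1-\delta}=(|\nabla w|^p g^{-\delta})^{(p-1)/p}(g^{p}g^{-\delta})^{1/p}\cdot(\text{const})$ and Young's inequality, using $\int g^{p}g^{-\delta}=\int g^{p-\delta}\apprle\int|\nabla w|^{p-\delta}$ and finally the $\ba_{p/q}$ / reverse-Hölder self-improvement of $g^{-\delta}$ to convert $\int|\nabla w|^p g^{-\delta}$ back into a lower bound for $\int|\nabla w|^{p-\delta}$ (by Hölder with exponent $p/(p-\delta)$ and its conjugate, the latter integral of $g^{\delta\cdot\frac{p-\delta}{\delta}}$-type being again $\apprle\int|\nabla w|^{p-\delta}$, since $g^{-\delta}\in\ba_{p/q}$ controls exactly these dual integrals). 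Choosing $\delta=\delta_1$ small makes the coefficient $\delta/(p-1-\delta)$ of the $|\nabla w|$-terms strictly less than the absorbing constant, yielding \eqref{eqb-12}.

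The main obstacle will be the bookkeeping in this absorption: one must verify that all constants produced by the Lipschitz truncation, the Sobolev--Poincaré/capacity estimate, and the $\ba_{p/q}$ bound on $g^{-\delta}$ depend only on $n,p,b,\Lambda_0,\Lambda_1,\gamma$ and \emph{not} on $\delta$ (this is guaranteed by the remark that an $\ba_1$ constant of $\mm(\cdot)^{\beta}$ can be taken independent of $\beta$ for $\beta\le 0.9$, and by $\delta<\delta_1\le\delta_0/2$), so that the smallness of the gain factor $\delta$ genuinely beats a fixed absorbing constant; the case $1<p<2$ additionally requires the Hölder interpolation trick of Lemma~\ref{DM} to pass from the degenerate lower bound $(|\hh+\nabla w|^2+|\hh|^2)^{(p-2)/2}|\nabla w|^2$ to $|\nabla w|^p$, at the cost of an extra factor $\big(\int|\nabla w|^{p-\delta}\big)^{(2-p)/(2-\delta)}$ that must also be absorbed.
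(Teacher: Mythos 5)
Your overall strategy is the right one and is essentially the paper's: extend $w$ by zero, form the function $g$ of Lemma~\ref{Apqg}, use the Lipschitz truncation $v_\lambda$ of Lemma~\ref{extension-theorem} as a test function, multiply by $\lambda^{-1-\delta}$, integrate in $\lambda$, and absorb. However, there are two substantive points where your write-up deviates from a correct execution.

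First, your decomposition is different from the paper's and costs you unnecessary work. You split $\aa(x,\hh+\nabla w)=[\aa(x,\hh+\nabla w)-\aa(x,\hh)]+\aa(x,\hh)$ and apply monotonicity between $\hh+\nabla w$ and $\hh$. This gives the degenerate lower bound $(|\hh+\nabla w|^2+|\hh|^2)^{(p-2)/2}|\nabla w|^2$, which for $1<p<2$ does not control $|\nabla w|^p$; you must then interpolate à la Lemma~\ref{DM}, producing a factor $\big(\int|\nabla w|^{p-\delta}\big)^{(2-p)/(2-\delta)}$ that is not trivially absorbable at the a priori level (there is no reference solution $u$ to lean on here as there is in Lemma~\ref{DM}). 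The paper instead writes $\aa(x,\hh+\nabla w)=\aa(x,\nabla w)-[\aa(x,\nabla w)-\aa(x,\hh+\nabla w)]$; on $F_\lambda$ one then has $\aa(x,\nabla w)\cdot\nabla v_\lambda=\aa(x,\nabla w)\cdot\nabla w\ge\Lambda_0|\nabla w|^p$ by \eqref{monotone} with $\zeta=0$ (using $\aa(x,0)=0$), uniformly in $p$, and the perturbation term is handled directly by \eqref{ellipticity} applied to $|\aa(x,\nabla w)-\aa(x,\hh+\nabla w)|\apprle|\hh|^\gamma(|\hh|^{p-1-\gamma}+|\nabla w|^{p-1-\gamma})$, which after multiplying by $g^{-\delta}|\nabla w|$ and Young's inequality absorbs cleanly with no $p$ dichotomy. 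You should switch to this split.

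Second, there is an error in the layer-cake bookkeeping for the bad set $\tilde\Om\setminus F_\lambda$. The testing identity contributes $\lambda\int_{F_\lambda^c}(|\hh+\nabla w|^{p-1}+|\bff|^{p-1})$ because $|\nabla v_\lambda|\le c\lambda$; after multiplying by $\lambda^{-1-\delta}$ you get the kernel $\lambda^{-\delta}$, and $\int_0^{g(x)}\lambda^{-\delta}\,d\lambda=g(x)^{1-\delta}/(1-\delta)$. So the correct weight on the bad-set integrand is $g^{1-\delta}$, not $g^{p-1-\delta}$; your claimed companion identity with $\lambda^{p-2-\delta}$ and exponent $p-1-\delta$ does not correspond to anything in the testing step. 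Interestingly, your subsequent factorization $(|\nabla w|^p g^{-\delta})^{(p-1)/p}(g^p g^{-\delta})^{1/p}$ actually equals $|\nabla w|^{p-1}g^{1-\delta}$, which is the \emph{correct} term, not $|\nabla w|^{p-1}g^{p-1-\delta}$ as you wrote — the algebra and the claim are internally inconsistent. With the exponent corrected to $1-\delta$, Young's inequality with exponents $\frac{p-\delta}{p-1}$ and $\frac{p-\delta}{1-\delta}$ together with $\int g^{p-\delta}\apprle\int|\nabla w|^{p-\delta}$ from Lemma~\ref{Apqg} gives exactly the absorbable bound. Relatedly, the passage from $\int|\nabla w|^p g^{-\delta}$ back to $\int|\nabla w|^{p-\delta}$ does not use the $\ba_{p/q}$ property of $g^{-\delta}$ as you state; it is plain Hölder's inequality with exponents $p/(p-\delta)$ and $p/\delta$ combined with \eqref{gtilOm}. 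The $\ba_{p/q}$ membership is needed only in the later boundary higher-integrability argument (Theorem~\ref{higher-integrability-boundary}), not here.
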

\begin{proof}
As $\tilde{\Omega}^c$ is uniformly $p$-thick, it is also uniformly $p_0$-thick for some $1<p_0<p$. Let $\delta_0\in (0, 1/2)$, with $p-\delta_0\geq p_0$, be as in Lemma \ref{Apqg}.  Let  $\delta\in (0, \delta_0/2)$ and $q$ be such that $p-\delta_0< q\leq p-2\delta  < p-\delta$.    Defining
\beas
g(x) := \max \left\{ \mm(|\nabla w|^q)^{1/q}(x), \frac{|{w}(x)|}{d(x,\partial \tilde{\Omega})} \right\},
\enas
 then it follows from Lemma \ref{Apqg} that 
\begin{equation}\label{psipdel}
 \integral_{\tilde{\Omega}}  g(x)^{p-\delta}\, dx \apprle  \integral_{\tilde{\Omega}}  |\nabla w|^{p-\delta} \, dx.
\end{equation}

We now apply Lemma \ref{extension-theorem} with $s=q$ and  $v=w$, 
to get a global $c\lambda$-Lipschitz function $v_{\lambda}$ such that $v_{\lambda} \in W^{1,\,\frac{p-\delta}{1-\delta}}_{0}(\tilde{\Omega})$.
Using $v_{\lambda}$  as a test function in \eqref{eqb-1} together with  \eqref{ellipticity-full} we have 
\begin{equation}
 \label{eqb-3}
\begin{aligned}
   & \integral_{\tilde{\Omega} \cap \flam} \aa(x,\nabla w)\cdot \nabla \vlam \, dx - \integral_{\tilde{\Omega} \cap \flam} |\bff|^{p-2} \bff \cdot\nabla \vlam  \, dx \\
   &- \integral_{\tilde{\Omega} \cap \flam} 
           (\aa(x,\nabla w) - \aa(x,\hh+\nabla w))\cdot\nabla \vlam \, dx   \\
& \quad \quad = - \integral_{\tilde{\Omega} \cap \flam^c} \aa(x,\hh+\nabla w) \cdot\nabla \vlam \, dx + \integral_{\tilde{\Omega} \cap \flam^c} |\bff|^{p-2} \bff \cdot\nabla \vlam  \, dx \\
     &\quad \quad\apprle \lambda \integral_{\tilde{\Omega} \cap \flam^c} |\hh+\nabla w|^{p-1} \, dx + 
                      \lambda \integral_{\tilde{\Omega} \cap \flam^c} |\bff|^{p-1} \, dx , 
\end{aligned}
\end{equation}
where $F_{\lambda}:=F_{\lambda}(w, \tilde{\Om})=\{x\in \tilde{\Om}: g(x)\leq \lambda \}.$
Multiplying equation  \eqref{eqb-3} by $\lambda^{-(1+\delta)}$ and integrating
from $0$ to $\infty$ with respect to  $\lambda$, we  get
\beas
\lefteqn{ I_1 - I_2 - I_3 :=}\\
& \qquad \integral_0^{\infty}  \lambda^{-(1+\delta)} \integral_{\tilde{\Om} \cap \flam} \aa(x,\nabla w)\cdot \nabla \vlam \, dx \,d\lambda \\
&\qquad -\integral_0^{\infty} \lambda^{-(1+\delta)} \integral_{\tilde{\Om} \cap \flam} |\bff|^{p-2} \bff \cdot\nabla \vlam \, dx\, d\lambda  \\ 
&\qquad - \integral_0^{\infty} \lambda^{-(1+\delta)} \integral_{\tilde{\Om} \cap \flam}  (\aa(x,\nabla w) - \aa(x,\hh+\nabla w))\cdot \nabla \vlam \, dx \,d\lambda  \\ 
&\qquad \apprle   \integral_0^{\infty} \lambda^{-\delta}  \integral_{\tilde{\Om} \cap \flam^c} \left( |\hh+\nabla w|^{p-1} + |\bff|^{p-1} \right) \, dx\,  d\lambda =: I_4.
\enas


We now continue with the following estimates for $I_j$, $j=1,2,3,4$.

\noindent {\it Estimate for $I_1$ from below:} Note that  we have $\nabla \vlam = \nabla w$ a.e. on $\flam$. Thus by changing the order of integration and using \eqref{monotone}, we get
\begin{equation}
\label{eqb-7}
 \begin{aligned}
  I_1 
&=  \integral_{\tilde{\Om}}\integral_{g(x)}^{\infty} \lambda^{-(1+\delta)}  \aa(x,\nabla w)\cdot\nabla w  \, d\lambda \, dx\\
&= \frac{1}{\delta} \integral_{\tilde{\Om}} g(x)^{-\delta} \aa(x,\nabla w)\cdot\nabla w \, dx \\
& \apprge \frac{1}{\delta}\integral_{\tilde{\Om}}g(x)^{-\delta} |\nabla w|^{p}\, dx. 
 \end{aligned}
\end{equation}

By H\"{o}lder's inequality, we have 
\beas
\integral_{\omegatwo} |\nabla w|^{p-\delta} \, dx  \apprle \left( \integral_{\omegatwo} |\nabla w|^p g(x)^{-\delta} \, dx \right)^{\frac{p-\delta}{p}} \left( \integral_{\omegatwo} g(x)^{p-\delta}\, dx \right)^{\frac{\delta}{p}},
\enas
and then by making use of \eqref{psipdel}, we obtain the estimate
\bea
\label{new-1} 
 \integral_{\omegatwo} |\nabla w|^{p-\delta} \, dx  \apprle  \integral_{\omegatwo} |\nabla w|^p g(x)^{-\delta} \, dx. 
\ena

Now we combine \eqref{eqb-7} with \eqref{new-1} and get 
\bea
\label{eqb-7-7}
I_1 \apprge \frac{1}{\delta} \integral_{\tilde{\Om}} |\nabla w|^{\pmd} \, dx.
\ena
\noindent {\it Estimate for $I_2$ from above:} Again by changing the order of integration and making use of Young's inequality, we get
\begin{equation*}
\label{eqb-8}
\begin{aligned}
I_2 
 &=  \integral_{\tilde{\Om}}\integral_{g(x)}^{\infty} \lambda^{-(1+\delta)} |\bff|^{p-2} \bff \cdot\nabla w  \, d\lambda \, dx\\
&= \frac{1}{\delta} \integral_{\tilde{\Om}} g(x)^{-\delta}  |\bff|^{p-2} \bff \cdot\nabla w \, dx \\
&\leq \frac{1}{\delta} \integral_{\tilde{\Om}}  |\bff|^{p-1} \,  |\nabla w|^{1-\delta} \, dx \\
& \leq \frac{c(\epsilon)}{\delta} \integral_{\tilde{\Om}} |\bff|^{\pmd}\, dx + \frac{\epsilon}{\delta} \integral_{\tilde{\Om}}  |\nabla w|^{\pmd}\, dx
\end{aligned}
\end{equation*}
for any $\ep > 0$. Here we used that $g^{-\delta}\leq |\nabla w|^{-\delta}$ a.e. in $\tilde{\Om}$ in the first inequality.

\noindent {\it  Estimate for $I_3$ from above:} Likewise,  changing the order of integration and making use of Young's inequality along with the H\"older type condition
 \eqref{ellipticity}, we get
\begin{equation*}
 \label{eqb-9}
\begin{aligned}
I_3 
&  = \integral_{\tilde{\Om}}\integral_{g(x)}^{\infty} \lambda^{-(1+\delta)}  (\aa(x,\nabla w) - \aa(x,\hh+\nabla w))\cdot\nabla w \,  d\lambda \, dx  \\
& \apprle  \frac{1}{\delta} \integral_{\tilde{\Om}}g(x)^{-\delta} |\hh|^{\gamma}\,  (|\hh|^{p-1-\gamma} + |\nabla w|^{p-1-\gamma}) \, |\nabla w|   \, dx \\
& \apprle \frac{c(\epsilon)}{\delta} \integral_{\tilde{\Om}} |\hh|^{{p-\delta}}\, dx + \frac{\epsilon}{\delta} \integral_{\tilde{\Om}}  |\nabla w|^{p-\delta}\, dx
\end{aligned}
\end{equation*}
for any $\ep > 0$. 

\noindent {\it  Estimate for $I_4$ from above:} Changing the order of integration and applying Young's inequality along with estimate \eqref{psipdel}, we get
\bea
 \label{eqb-10}
 I_4  
& =   \integral_{\tilde{\Om}}  \integral_0^{g(x)} \lambda^{-\delta} (|\hh + \nabla w|^{p-1} + |\bff|^{p-1}) \, d\lambda \, dx \\
& = \frac{1}{1-\delta} \integral_{\tilde{\Om}}  g(x)^{1-\delta} (|\hh + \nabla w|^{p-1} + |\bff|^{p-1}) \hspace*{0.05cm}  \, dx \\
& \apprle  \integral_{\tilde{\Om}}  |\nabla w|^{p-\delta} \, dx +  \integral_{\tilde{\Om}} (|\hh|^{p-\delta}  + |\bff|^{p-\delta}) \, dx .\\
\ena

Combining estimates \eqref{eqb-7-7}-\eqref{eqb-10} and recalling that $I_1-I_2-I_3 \apprle I_4$, we have
\begin{eqnarray*}
\integral_{\tilde{\Om}} |\nabla w|^{\pmd}  dx &\leq&  c_1(c(\epsilon)+\delta)\integral_{\tilde{\Om}} |\bff|^{\pmd} dx + 
c_1(2\epsilon+\delta) \integral_{\tilde{\Om}}  |\nabla w|^{\pmd}\, dx + \\
&& +\, c_1(c(\epsilon)+\delta) \integral_{\tilde{\Om}} |\hh|^{p-\delta}  \, dx
\end{eqnarray*}
for a constant $c_1$ independent of $\epsilon$ and $\delta$.

 We now  choose  $\epsilon=1/(4 c_1) $ and $\delta_1=\min\{1/(4c_1), \delta_0/2\}$ in the last inequality to obtain  estimate \eqref{eqb-12} for any $\delta\in (0, \delta_1)$. 
\end{proof}

Once we have the a priori estimate \eqref{eqb-12} and the interior higher integrability result from Theorem \ref{regularity}, the following existence result follows by using  techniques employed in the proof of \cite[Theorem 2]{IW}.  

\begin{corollary}\label{exist-thick}
Suppose that $\aa$ satisfies \eqref{monotone}-\eqref{ellipticity}.
Let $\tilde{\Omega}$ be a bounded domain whose complement is uniformly $p$-thick with constants $r_0$ and $b > 0$. Let $\delta \in (0, \min\{\delta_1, \tdelta_2\})$, 
with $\delta_1$ as in Theorem \ref{appriori-boundary} and $\tdelta_2$ as in Theorem \ref{regularity}. Then given  any $w_0 \in W^{1, \pmd} (\tilde{\Omega})$,  there exists a very weak solution 
$w \in w_0+ W_0^{1,\pmd} (\tilde{\Omega})$ to the equation ${\rm div} \aa(x, \nabla w)=0$ such that
$$\int_{\tilde{\Om}}|\nabla w|^{p-\delta} dx \leq C  \int_{\tilde{\Om}}|\nabla w_0|^{p-\delta} dx,$$
where $C=C(n,p, b, \Lambda_0,\Lambda_1, \gamma)$
\end{corollary}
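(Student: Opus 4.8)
The plan is to obtain $w$ as the weak limit (in $W^{1,p-\delta}$) of a sequence of solutions to regularized problems with exponents at or above the natural exponent $p$, exactly as in the proof of \cite[Theorem 2]{IW}, but now invoking the boundary a priori estimate \eqref{eqb-12} of Theorem \ref{appriori-boundary} in place of its interior counterpart. First I would note that since $W^{1,p}(\tilde\Omega)$ is dense in $W^{1,p-\delta}(\tilde\Omega)$, it suffices to treat the case $w_0\in W^{1,p}(\tilde\Omega)$; the general case then follows by a further diagonal/limiting argument using the linearity of the estimate in $\|\nabla w_0\|_{L^{p-\delta}}$. With $w_0\in W^{1,p}(\tilde\Omega)$ fixed, the classical theory of monotone operators (Browder--Minty) applied to the strictly monotone, coercive operator associated with $\aa$ on $w_0+W^{1,p}_0(\tilde\Omega)$ produces a finite energy solution $w\in w_0+W^{1,p}_0(\tilde\Omega)$ of ${\rm div}\,\aa(x,\nabla w)=0$; this is the standard starting point.

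The core of the argument is the a priori bound. I would approximate: for each $k$, let $w_0^{(k)}\in W^{1,p}(\tilde\Omega)$ with $w_0^{(k)}\to w_0$ in $W^{1,p-\delta}(\tilde\Omega)$, and let $w^{(k)}\in w_0^{(k)}+W^{1,p}_0(\tilde\Omega)$ be the corresponding finite energy solution. Writing $v^{(k)}:=w^{(k)}-w_0^{(k)}\in W^{1,p-\delta}_0(\tilde\Omega)$, we see that $v^{(k)}$ solves ${\rm div}\,\aa(x,\hh^{(k)}+\nabla v^{(k)})=0$ with $\hh^{(k)}:=\nabla w_0^{(k)}$ and $\bff\equiv 0$. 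Applying Theorem \ref{appriori-boundary} (valid here because $\delta<\delta_1$ and $v^{(k)}\in W^{1,p-\delta}_0(\tilde\Omega)$ is itself a very weak solution of the required form) gives
\begin{equation*}
\integral_{\tilde\Omega}|\nabla v^{(k)}|^{p-\delta}\,dx \leq C\integral_{\tilde\Omega}|\nabla w_0^{(k)}|^{p-\delta}\,dx,
\end{equation*}
hence $\integral_{\tilde\Omega}|\nabla w^{(k)}|^{p-\delta}\,dx\leq C\integral_{\tilde\Omega}|\nabla w_0^{(k)}|^{p-\delta}\,dx$ with $C=C(n,p,b,\Lambda_0,\Lambda_1,\gamma)$, uniformly in $k$. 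Thus $\{w^{(k)}-w_0^{(k)}\}$ is bounded in $W^{1,p-\delta}_0(\tilde\Omega)$; passing to a subsequence, $w^{(k)}-w_0^{(k)}\rightharpoonup v$ weakly in $W^{1,p-\delta}_0(\tilde\Omega)$, and we set $w:=v+w_0$.

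It remains to pass to the limit in the equation, i.e. to show $\integral_{\tilde\Omega}\aa(x,\nabla w^{(k)})\cdot\nabla\varphi\,dx\to\integral_{\tilde\Omega}\aa(x,\nabla w)\cdot\nabla\varphi\,dx$ for every $\varphi\in C_0^\infty(\tilde\Omega)$. Here is where I expect the main obstacle, since weak $W^{1,p-\delta}$ convergence of the gradients does not by itself control the nonlinearity; one needs a.e. convergence of $\nabla w^{(k)}$. The standard remedy is to invoke the interior higher integrability of Theorem \ref{regularity}: each $w^{(k)}$ (being a finite energy solution, in particular a very weak solution with $W^{1,p-\tilde\delta_2}_{\rm loc}$ regularity) lies in $W^{1,p+\tilde\delta_2}_{\rm loc}(\tilde\Omega)$ with a local estimate controlled by $\integral_{\tilde\Omega}|\nabla w^{(k)}|^{p-\tilde\delta_2}\,dx$, which by interpolation and the uniform $L^{p-\delta}$ bound is itself uniformly bounded. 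Consequently $\{w^{(k)}\}$ is bounded in $W^{1,q}_{\rm loc}(\tilde\Omega)$ for some $q>p-\delta$, which upgrades the weak convergence and, via the by-now-classical monotonicity trick (testing the difference of the equations for $w^{(k)}$ and $w$ with a cutoff of $w^{(k)}-w$, using \eqref{monotone} and \eqref{ellipticity-full} together with the local higher integrability to absorb error terms), yields $\nabla w^{(k)}\to\nabla w$ a.e. in $\tilde\Omega$. With \eqref{ellipticity-full} providing an equi-integrable majorant for $\aa(x,\nabla w^{(k)})$ on compact subsets, Vitali's convergence theorem gives the passage to the limit, so $w$ is a very weak solution of ${\rm div}\,\aa(x,\nabla w)=0$. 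Finally, weak lower semicontinuity of the $L^{p-\delta}$ norm applied to $\nabla(w-w_0)$, combined with $\nabla w_0^{(k)}\to\nabla w_0$ in $L^{p-\delta}$ and the triangle inequality, delivers $\integral_{\tilde\Omega}|\nabla w|^{p-\delta}\,dx\leq C\integral_{\tilde\Omega}|\nabla w_0|^{p-\delta}\,dx$ with the asserted dependence of $C$, completing the proof.
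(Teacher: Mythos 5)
Your proposal is essentially correct and recovers the approach the paper indicates: the paper gives no details here but simply defers to the technique of \cite[Theorem 2]{IW}, and your argument---finite-energy approximation, the uniform a priori bound from Theorem \ref{appriori-boundary} applied to $v^{(k)}=w^{(k)}-w_0^{(k)}$ with $\hh=\nabla w_0^{(k)}$ and $\bff\equiv0$, weak compactness in $W^{1,p-\delta}_0(\tilde\Omega)$, and the higher-integrability/monotonicity passage to the limit---is precisely that technique transported to the $p$-thick setting.

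Two small points are worth flagging. First, the phrase ``testing the difference of the equations for $w^{(k)}$ and $w$'' is a slight misstatement, since $w$ is not yet known to solve anything at that stage; what one actually does is test the equation for $w^{(k)}$ against a cutoff of $w^{(k)}-w$, then add and subtract $\aa(x,\nabla w)$---the monotonicity trick requires no equation for $w$. Second, the opening reduction to $w_0\in W^{1,p}(\tilde\Omega)$ tacitly assumes that $W^{1,p}(\tilde\Omega)$ is dense in $W^{1,p-\delta}(\tilde\Omega)$, which is not automatic for an arbitrary bounded domain with $p$-thick complement (it holds for extension domains, or when, as in the paper's applications, $w_0$ is the restriction of some function in $W^{1,p-\delta}_0(\Omega)$ and one can approximate by $C^\infty_c(\Omega)$). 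A cleaner variant that sidesteps this is to truncate the gradient rather than $w_0$ itself: set $\hh_k:=\nabla w_0\,\chi_{\{|\nabla w_0|\le k\}}\in L^\infty(\tilde\Omega)$, solve ${\rm div}\,\aa(x,\hh_k+\nabla v_k)=0$ for $v_k\in W^{1,p}_0(\tilde\Omega)$ by Browder--Minty, apply Theorem \ref{appriori-boundary} directly to $v_k$ (this is exactly the form in which that theorem is stated, with $\bff\equiv0$) to get the uniform $L^{p-\delta}$ bound, and then pass to the limit as you describe, finally putting $w:=w_0+v$. Since $\hh_k\to\nabla w_0$ in $L^{p-\delta}$, the limit equation is ${\rm div}\,\aa(x,\nabla w)=0$ and the a priori bound follows by weak lower semicontinuity, with no density hypothesis needed.
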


\begin{remark}\label{delta=0}
{\rm It is well-known that in the case $\delta=0$ Corollary  \ref{exist-thick} and Corollary  \ref{iwaniec-exist} hold as long as $\aa$ satisfies   \eqref{monotone} and 
\eqref{ellipticity-full}, i.e., the condition \eqref{ellipticity} with $\gamma\in (0,1)$ is not needed. Moreover, the so-obtained solution $w$ 
is unique in this case, whereas uniqueness remains unknown in the case  $\delta>0$.
We also notice that Corollary  \ref{exist-thick} has been known earlier but only for more regular domains (see \cite{IW}). 
}\end{remark}

In what follows, we shall only consider $\Om$ to be a bounded domain whose complement is uniformly $p$-thick with constants $r_0$ and $b$. Fix $x_0 \in \partial \Omega$ and choose 
$R>0$ such that $2 R\leq r_0$. Let $\Om_{2R}=\Om_{2R}(x_0)=\Omega\cap B_{2R}(x_0)$. With  some
$\delta\in (0,\min\{1, p-1\})$, we consider the following Dirichlet problem:
\begin{equation} \label{wf=0}
\left\{ \begin{array}{rcl}
{\rm div} \aa(x, \nabla w)&=& 0  \quad \text{in} ~\Omega_{2R}, \\
w&=&0  \quad \text{on}~ \partial \Omega \cap B_{2R}(x_0).
\end{array}\right.
\end{equation}

A function $w\in W^{1,\, p-\delta}(\Om_{2R})$ is called a very weak solution to \eqref{wf=0}  if its zero extension from $\Om_{2R}(x_0)$ to $B_{2R}(x_0)$ belongs
to $W^{1,\, p-\delta}(B_{2R}(x_0))$ and for all $\varphi\in W^{1,\,\frac{p-\delta}{1-\delta}}_{0}(\Om_{2R})$, we have
$$\integral_{\Omega_{2R}} \aa(x, \nabla w)\cdot\nabla \varphi \, dx =0. $$

In the following theorem we obtain a higher integrability result for equation \eqref{wf=0}, which gives a boundary analogue of Theorem
\ref{regularity}, and hence Theorem \ref{higher-quali}. We shall follow 
the Lipschitz truncation method of \cite{John} that was used to treat the interior case; see also \cite[Theorem 9.4]{Mik}. Here to deal with the boundary case 
we  use an idea of \cite{XZ}.

\begin{theorem}\label{higher-integrability-boundary} 
Suppose that $\aa$ satisfies \eqref{monotone} and \eqref{ellipticity-full}, and that $\RR^n\setminus\Om$ 
 is uniformly $p$-thick with constants $r_0$ and $b$..
There exists a constant $\delta_2=\delta_2(n, p, b, \Lambda_0,\Lambda_1)>0$ 
sufficiently small such that if $w \in  W^{1,p-\delta_2} (\Omega_{2R})$ is a very weak solution to equation
\eqref{wf=0}, then  $w \in W^{1,p+\delta_2}(\Omega_{R})$. Moreover, if we extend $w$ by  zero  from $\Om_{2R}$ to $B_{2R}$, then 
the estimate
\begin{equation*}
\bigg{(} \fintegral_{\frac{1}{2} B} |\nabla w|^{p+\delta_2} \, dx\bigg{)}^\frac{1}{p+\delta_2} \leq C \bigg{(} \fintegral_{7B} |\nabla w|^{p-\delta_2 } \, dx \bigg{)}^{\frac{1}{p-\delta_2}} 
\end{equation*}
holds for all  balls $B$ such that $7B\subset B_{2R}$. Here $C=C(n, p, b, \Lambda_0,\Lambda_1)$.
\end{theorem}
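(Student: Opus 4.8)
The plan is to adapt the interior Lipschitz-truncation argument of \cite{John} (as formulated in \cite[Theorem 9.4]{Mik}) to the boundary, using the extension Lemma \ref{extension-theorem} and the pointwise Hardy inequality (valid because $\RR^n\setminus\Om$ is uniformly $p$-thick) to handle the boundary datum $w=0$ on $\partial\Om\cap B_{2R}$. Fix a ball $B=B_\rho(z)$ with $7B\subset B_{2R}$, and work with the zero extension of $w$ to $B_{2R}$, so that $w\in W^{1,p-\delta_2}(B_{2R})$. As in Lemma \ref{Apqg}, choose $q$ slightly below $p$ (with $p-\delta_0<q\le p-2\delta_2$) so that $\tfrac{|w(x)|}{d(x,\partial\Om)}\apprle \mm(|\nabla w|^q)^{1/q}(x)$ a.e.\ in $\Om$; then the truncation sets $F_\lambda(w,\Om)$ of Lemma \ref{extension-theorem} are (up to null sets) just the superlevel sets of $g:=\mm(|\nabla w|^q)^{1/q}$, and the Lipschitz truncations $v_\lambda$ vanish on $\Om^c$. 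The first step is a Caccioppoli-type inequality on $B$: test \eqref{wf=0} with $\eta^p(v_\lambda - (v_\lambda)_{\text{appropriate}})$ or rather with $\eta^p v_\lambda$ (since $w$ already has zero boundary data, no subtraction of a mean is needed near $\partial\Om$; away from $\partial\Om$ one subtracts a constant as usual) where $\eta$ is a cutoff supported in $2B$. Splitting the resulting integral over $F_\lambda$ and $F_\lambda^c$, using $\nabla v_\lambda=\nabla w$ on $F_\lambda$ and $|\nabla v_\lambda|\le c\lambda$ everywhere, together with \eqref{monotone} and \eqref{ellipticity-full}, yields a bound of the form
\[
\integral_{B\cap F_\lambda}|\nabla w|^p\,dx \apprle \lambda^p\,|2B\cap F_\lambda^c| + (\text{lower order / cutoff terms}).
\]

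The second step is the standard level-set / good-$\lambda$ bookkeeping. Multiply the Caccioppoli inequality by $\lambda^{-1-\delta}$ for a small auxiliary exponent $\delta>0$ (eventually $\delta=\delta_2$) and integrate in $\lambda$ from some threshold $\lambda_0>0$ to $\infty$; by Fubini this converts $\integral_{B\cap F_\lambda}|\nabla w|^p$ into $\integral_B g^{-\delta}|\nabla w|^p$ and $\lambda^p|2B\cap F_\lambda^c|$ into (a multiple of) $\integral_{2B}g^{p-\delta}$, which by the maximal-function bound and the Hardy inequality is $\apprle \integral_{7B}|\nabla w|^{p-\delta}$ — here the enlargement from $2B$ to $7B$ comes from the maximal operator in the definition of $g$, whose relevant values at points of $2B$ are controlled by averages over balls contained in $7B$ (after truncating the maximal function to this scale, or using a local maximal function). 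Since $g^{-\delta}\le |\nabla w|^{-\delta}$ a.e.\ on $\{\nabla w\neq0\}$, the left side dominates $\integral_{B}|\nabla w|^{p-\delta}$, but one actually wants the reverse-Hölder improvement: combining the weighted Caccioppoli inequality with Sobolev–Poincaré (Theorem \ref{sobolev-poincare}, applied with the zero set $N(w)\supset \Om^c\cap 2B$, whose $s$-capacity is bounded below by uniform $p$-thickness) gives a reverse-Hölder inequality
\[
\Bigl(\fintegral_{B}|\nabla w|^{p+\delta_2}\Bigr)^{\frac{1}{p+\delta_2}} \apprle \Bigl(\fintegral_{7B}|\nabla w|^{p-\delta_2}\Bigr)^{\frac{1}{p-\delta_2}},
\]
after choosing $\delta_2$ small enough (depending on $n,p,b,\Lambda_0,\Lambda_1$) that Gehring's lemma applies and the self-improving exponent exceeds $p$.

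The main obstacle I anticipate is the bookkeeping needed to make the capacity of the zero set appear with the right power and to absorb the boundary terms: near $\partial\Om$ one must use that $w$ vanishes (so $N(w)\cap 2B$ contains $\Om^c\cap 2B$, which by uniform $p$-thickness has ${\rm cap}_q(\cdot,4B)\apprge {\rm cap}_q(\overline{2B},4B)\simeq \rho^{n-q}$), while in balls well inside $\Om$ one must instead subtract a mean and use the ordinary Sobolev–Poincaré — a single argument must cover both regimes uniformly. Equivalently, one treats all balls $B$ with $7B\subset B_{2R}$ at once by always invoking Theorem \ref{sobolev-poincare} with $N(\phi)\ni$ either $\Om^c\cap 2B$ or (if that set is small) a level set of $|w - \overline w|$, and checking the capacity lower bound in each case; this is exactly the place where the idea of \cite{XZ} and the pointwise Hardy inequality of \cite{PH} enter. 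Once the reverse-Hölder inequality is established for every such $B$, a covering argument upgrades it to $\nabla w\in L^{p+\delta_2}_{\rm loc}(\Om_{2R}\cup(\partial\Om\cap B_{2R}))$, and in particular to $L^{p+\delta_2}(\Om_R)$, with the stated quantitative bound; the qualitative Theorem \ref{higher-quali} follows by applying this with $B_{2R}(x_0)$ in place of $B_{2R}$ and covering $\partial\Om\cap B_R(x_0)$ together with the interior higher integrability of Theorem \ref{regularity}.
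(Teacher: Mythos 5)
Your strategy---Lipschitz truncation in the spirit of \cite{John}/\cite{XZ}, the pointwise Hardy inequality from uniform $p$-thickness, the capacitary Sobolev--Poincar\'e inequality (Theorem \ref{sobolev-poincare}) to control $|w/\rho|$, and Gehring's lemma to self-improve the exponent---is exactly the route taken in the paper, so the overall plan is correct. However, two mechanical steps as you describe them would not go through literally. First, Lemma \ref{extension-theorem} requires the input function to lie in $W^{1,s}_0(\tilde\Om)$, but $w$ (extended by zero from $\Om_{2R}$ to $B_{2R}$) does \emph{not} vanish on $\partial B_{2\rho}\cap\Om$, so neither $\tilde\Om=\Om$ nor $\tilde\Om=\Om_{2\rho}$ makes $w$ admissible; the paper resolves this by first setting $\bar w=\phi w$ with $\phi$ a cutoff supported in $B_{2\rho}$, so that $\bar w\in W^{1,s}_0(\Om_{2\rho})$, and then truncating $\bar w$---the cutoff must come \emph{before} the truncation, not as a multiplier $\eta^p v_\lambda$ on the test function (which would also destroy the $c\lambda$-Lipschitz bound you rely on). Second, the Caccioppoli right-hand side you write as $\lambda^p|2B\cap F_\lambda^c|$ should instead be $\lambda\int_{F_\lambda^c}|\nabla w|^{p-1}$, because $\aa(x,\nabla w)$ is only bounded by $|\nabla w|^{p-1}$ and $|\nabla w|$ is \emph{large} on $F_\lambda^c$, not bounded by $\lambda$; after multiplying by $\lambda^{-1-\delta}$ and integrating, this produces $\int g^{1-\delta}|\nabla w|^{p-1}$, which one splits by Young's inequality. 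Finally, the smallness needed before Gehring is not automatic: the paper extracts an explicit small factor $\delta^{1-\delta}$ on the term to be absorbed by decomposing the annulus $\Om_{2\rho}\setminus\Om_\rho$ into the set $D_1$ where $\mm(|\nabla\bar w|^q)^{1/q}\le\delta\,\mm(|\nabla w|^q\chi_{\Om_{2\rho}})^{1/q}$ and its complement $D_2$, treating each separately (a Young inequality with a free small parameter $\epsilon$ handles $D_2$). Once these three details are fixed your argument matches the paper's proof, including the transfer from balls centered on $\partial\Om$ to general balls $B_\rho(x_1)$ with $B_\rho(x_1)\cap\partial\Om\neq\emptyset$ via the chain $B_{\rho/2}(x_1)\subset B_{3\rho/2}(z)\subset B_{6\rho}(z)\subset B_{7\rho}(x_1)$.
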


\begin{proof}
Let $z\in \partial\Om\cap B_{2R}(x_0)$ be a boundary point and let $\rho>0$ be such that $B_{2\rho}(z)\subset B_{2R}(x_0)$. We now set $\Om_{2\rho}=\Om_{2\rho}(z)=\Om\cap B_{2\rho}(z)$. Note then that 
$\Om_{2\rho} \subset \Om_{2R}(x_0)$.

As $\Om^c$ is uniformly $p$-thick, it is also uniformly $p_0$-thick for some $1<p_0<p$.  The same is also true for $\Om_{2\rho}^c$.
Let $\delta_0\in (0, 1/2)$, with $p-\delta_0\geq p_0$, be as in Lemma \ref{Apqg} with $\tilde{\Om}=\Om_{2\rho}$. Let  $\delta\in (0, \delta_0/2)$ and $q$ be such that $p-\delta_0< q\leq p-2\delta  < p-\delta$.   

Suppose now that $w \in  W^{1,p-\delta} (\Omega_{2R}(x_0))$ is a solution of \eqref{wf=0}. Extending $w$ to $B_{2\rho}=B_{2\rho}(z)$ by zero we have $w \in W^{1,\pmin}(B_{2\rho})$. Let $\phi \in C_c^{\infty} (\btwo)$ with $0\leq \phi\leq 1$, $\phi \equiv 1$ on $\brho$ and $|\nabla\phi|\leq 4/\rho$. Define $\bar{w} = \phi w$ and $g$ to be the function
$$g(x) = \max \left\{ \mm(\wbar^q)^{1/q}(x), \frac{|\bar{w}(x)|}{d(x,\partial \omegatwo)} \right\}.$$ 
Then it follows from   Lemma \ref{Apqg} that 
\begin{equation}\label{gpdel}
 \integral_{\omegatwo}  g^{p-\delta}\, dx \apprle  \integral_{\omegatwo}  \wbar^{p-\delta} \, dx.
\end{equation}

We now apply Lemma \ref{extension-theorem} with $s=q$, $\tilde{\Om}=\Om_{2\rho}$ and  $v=\bar{w}$, 
to get a global $c\lambda$-Lipschitz function $v_{\lambda}$ such that $v_{\lambda} \in W^{1,\,\frac{p-\delta}{1-\delta}}_{0}(\Om_{2\rho})$.
Using $v_{\lambda}$  as a test function in \eqref{wf=0} together with  \eqref{ellipticity-full} we have 
\bea
\label{eq3}
 \integral_{\omegatwo \cap \flam} \aa(x,\nabla w)\cdot \nabla \vlam \, dx & =  - \integral_{\omegatwo \cap \flam^c } \aa(x,\nabla w)\cdot \nabla \vlam \, dx\\
& \apprle  \lambda \integral_{\omegatwo \cap \flam^c} |\nabla w |^{p-1} \, dx,
\ena
where $F_{\lambda}:=F_{\lambda}(\bar{w}, \Om_{2\rho})=\left\{x\in \Om_{2\rho}: g(x)\leq \lambda \right\}.$
Multiply equation  \eqref{eq3} by $\lambda^{-(1+\delta)}$ and integrate 
from $0$ to $\infty$ with respect to  $\lambda$, we then get
\beas
I_1:& = \lefteqn{\integral_{0}^{\infty} \lambda^{-(1+\delta)} \integral_{\omegatwo \cap \flam} \aa(x,\nabla w) \cdot \nabla \vlam \, dx\, d\lambda}\\
  &  \apprle   \integral_{0}^{\infty} \lambda^{-\delta}\integral_{\omegatwo \cap \flam^c } |\nabla w |^{p-1} \, dx \,d\lambda\\
&=  \integral_{\omegatwo} \integral_{0}^{g(x)} \lambda^{-\delta} d\lambda \, |\nabla w |^{p-1} \, d\lambda \, dx \\
&= \frac{1}{1-\delta} \integral_{\omegatwo} g(x)^{1-\delta} |\nabla w |^{p-1} \, dx 
\enas
where the first equality follows by Fubini's Theorem. Thus applying Young's inequality and using \eqref{gpdel}, we obtain
\bea
\label{eq4}
 I_1 
 & \apprle    \integral_{\omegatwo}  |\nabla w |^{p-\delta} \, dx +  \integral_{\omegatwo}  \wbar^{p-\delta}\, dx  \\
& \apprle   \integral_{\omegatwo}  (|\nabla w|^{p-\delta} + |w/\rho|^{p-\delta}) \, dx \\
& \apprle   \integral_{B_{2\rho}}  |\nabla w|^{p-\delta} \, dx.  
\ena
Here the last inequality follows from Theorem \ref{sobolev-poincare} since  $w = 0$ on $\Omega^c \cap \btwo$.

Our next goal is to estimate $I_1$ from below.   To this end,  changing the order of integration and noting that $\nabla v_\lambda=\nabla \bar{w}$ a.e. on $F_\lambda$,  we can write
\beas
I_1 
&= \int_{\Om_{2\rho}} \int_{g(x)}^{\infty} \lambda^{-(1+\delta)} d\lambda\, \aa(x,\nabla w) \cdot \nabla \bar{w}\,d\lambda  \, dx\\
&= \frac{1}{\delta}\integral_{\omegatwo} g(x)^{-\delta} \aa(x,\nabla w) \cdot \nabla \bar{w} \, \, dx.
\enas

To continue we set
\begin{gather*}
  D_1 = \left\{ x \in \omegatwo \setminus \omegarho : \mm(\wbar^q)^{1/q} \leq \delta \mm(|\nabla w|^q \chi_{\omegatwo})^{1/q} \right\} ,\\
  D_2 = \omegatwo \setminus (\omegarho \cup D_1),
\end{gather*}
and note that  $w=\bar{w}$ on $\Om_\rho$. Thus it follows from \eqref{monotone} and \eqref{ellipticity-full} that 
\bea
\label{eq12}
\delta\, I_1 & \geq \Lambda_0 \integral_{\omegarho} g^{-\delta} |\nabla w|^p \, dx+
\integral_{D_1} g(x)^{-\delta} \aa(x,\nabla w) \cdot \nabla \bar{w} \, \, dx\\
&\qquad + \integral_{D_2} g(x)^{-\delta} \aa(x,\nabla w) \cdot \nabla \phi w\, \, dx\\
& \geq  \Lambda_0 \integral_{\omegarho} g^{-\delta} |\nabla w|^p \, dx - \Lambda_1 \integral_{D_1} g^{-\delta} |\nabla w|^{p-1} \wbar \, dx \\
&\qquad - \frac{4\Lambda_1}{\rho}
\integral_{D_2} g^{-\delta} |\nabla w|^{p-1} |w|\, dx  \\
& =:  I_2 - I_3 - I_4. 
\ena

Combining  \eqref{eq4} and \eqref{eq12}, we obtain
\begin{equation}\label{234}
I_2\apprle I_3+I_4 +\delta \integral_{B_{2\rho}}  |\nabla w|^{p-\delta} \, dx. 
\end{equation}

We now consider the following estimates for $I_2$, $I_3$, and $I_4$.

\noindent{\it Estimate  for $I_2$  from below:}
Recall that by Lemma \ref{Apqg}, $g^{-\delta}\in \ba_{p/q}$. Thus by the boundedness of $\mm$ we have  
\begin{equation}
 \label{eq13}
I_2 = \Lambda_0 \integral_{\omegarho} g(x)^{-\delta} |\nabla w|^p \, dx\apprge \integral_{B_{\rho}} g(x)^{-\delta} \mm(|\nabla w|^q\chi_{\omegarho})^{p/q}\, dx.  
\end{equation}

On the other hand,  for $x \in B_{\rho/2}$, there holds 
\beas
\mm(\wbar^q)^{1/q} (x) &\leq \sup_{ \substack{x \in B'\\  B' \subset B_\rho}} \left( \fintegral_{B'} \wbar^q dy\right)^{1/q} + {\sup_{\substack{x \in B'\\  B'  \cap B_{\rho}^c \neq \emptyset }}} \left( \fintegral_{B'} \wbar^q dy\right)^{1/q}
\\
&\leq  \mm(|\nabla w|^q\chi_{\omegarho})^{1/q}(x) + c \left( \fintegral_{B_{2\rho}} |\nabla \bar{w}|^q dy\right)^{1/q}, \\
\enas
where we have used that $\bar{w} =w $  on $B_\rho$ and $w = 0$ on $\Omega^c \cap B_{\rho}$. Also, recall that $\bar{w}$ is  zero outside $B_{2\rho}$. By  Theorem \ref{sobolev-poincare} we find 
\beas
 \fintegral_{B_{2\rho}} \wbar^q dy \leq \fintegral_{B_{2\rho}} |\nabla w|^q dy + \frac{c}{\rho^q} \fintegral_{B_{2\rho}} |w|^q dy \leq c \fintegral_{B_{2\rho}} |\nabla w|^q dy,
\enas
which  gives   
\beas
g(x)&\leq c\, \mm(\wbar^q)^{1/q} (x) \\
&\leq  c_1\mm(|\nabla w|^q\chi_{\omegarho})^{1/q}(x) +  c_2\left( \fintegral_{B_{2\rho}} |\nabla w|^q dy\right)^{1/q}
\enas
for all $x\in B_{\rho/2}$. Here  recall from Lemma \ref{Apqg} that $g \simeq \mm(\wbar^q)^{1/q}$ a.e. in $\RR^n$.  

Letting now
 \beas G = \Big\{ x \in B_{\rho/2} : c_1\mm(|\nabla w|^q\chi_{\omegarho})^{1/q}(x)
  \geq c_2 \Big( \fintegral_{B_{2\rho}} |\nabla w|^q dy\Big)^{1/q} \Big\},\enas 
then for every $x\in G$ we have 
\begin{equation}
\label{glow}
g(x) \leq 2 c_1 \mm(|\nabla w|^q\chi_{\omegarho})^{1/q}(x).
\end{equation}

Combining \eqref{eq13} and \eqref{glow} we can estimate $I_2$ from below by
\bea
\label{eq16}
  I_2 & \geq  c \integral_{G} \mm(|\nabla w|^q\chi_{\omegarho})^{-\delta/q} \mm(|\nabla w|^q\chi_{\omegarho})^{p/q} \, dx  \\
& \geq  c  \integral_{B_{\rho/2}} |\nabla w|^{p-\delta} \, dx - c_0 \rho^n \left( \fintegral_{B_{2\rho}} |\nabla w|^q \, dx\right)^{\frac{p-\delta}{q}}.
\ena

\noindent {\it Estimate for $I_3$ from above:} By the definition of $D_1$ and the boundedness of the maximal function $\mm$,  we have  
\beas
 \label{eq17}
I_3 &= \Lambda_1 \integral_{D_1} g^{-\delta} |\nabla w|^{p-1} \wbar \, dx\\
& \apprle     \integral_{D_1}\mm(\wbar^q)^{\frac{1-\delta}{q}} |\nabla w|^{p-1} \, dx \\
& \apprle   \delta^{1-\delta} \integral_{\omegatwo} \mm(|\nabla w|^q\chi_{\omegatwo})^{\frac{1-\delta}{q}} |\nabla w|^{p-1} \, dx \\
& \apprle   \delta^{1-\delta} \integral_{\omegatwo} |\nabla w|^{p-\delta} \, dx. 
\enas

\noindent {\it Estimate for $I_4$ from above:} By the definition of $D_2$ we have
 \beas
 I_4 &= \frac{4\beta}{\rho} \integral_{D_2} g^{-\delta} |\nabla w|^{p-1} |w|\, dx \\
  & \apprle  \frac{1}{\rho} \integral_{D_2}\mm(\wbar^q)^{-\delta/q} |\nabla w|^{p-1} |w| \, dx\\
  & \apprle  \frac{\delta^{-\delta}}{\rho}\integral_{D_2}\mm(|\nabla w|^q\chi_{\Om_{\rho}})^{(p-1-\delta)/q}\,  |w| \, dx.
\enas
With this and making use of Young's inequality,   we find, for any $\epsilon>0$,
 \bea
\label{eq19}
 I_4 & \apprle   \epsilon \integral_{\omegatwo} \mm(|\nabla w|^q\chi_{\omegatwo})^{\frac{p-\delta}{q}} \, dx+ \frac{c(\epsilon)}{\rho^{p-\delta}} \integral_{B_{2\rho}} |w|^{p-\delta} \, dx\\
& \apprle   \epsilon \integral_{\omegatwo} |\nabla w|^{p-\delta} \, dx + c(\epsilon)\rho^n \left( \fintegral_{B_{2\rho}} |\nabla w|^{q}\, dx \right)^{\frac{p-\delta}{q}}. 
\ena
Here the last inequality follows from the boundedness of $\mm$ and Theorem \ref{sobolev-poincare} provided $\delta_0$ is sufficiently small
so that $nq/(n-q)\geq p$.

Collecting all of the estimates in \eqref{234}, \eqref{eq16}-\eqref{eq19}  we obtain
\bea
\label{deltaep}
 \int_{B_{\rho/2}} |\nabla w|^{p-\delta} \, dx &\apprle  (1+c(\epsilon)) \rho^n \bigg{(} \fintegral_{B_{2\rho}} |\nabla w|^{q} \, dx \bigg{)}^{\frac{p-\delta}{q}}\\
 & \qquad +\, (\delta+ \delta^{1-\delta} +\epsilon) \int_{B_{2\rho}} |\nabla w|^{p-\delta} \, dx.
\ena

Recall that the balls in \eqref{deltaep} are centered at $z\in \partial \Om\cap B_{2R}(x_0)$ and $B_{2\rho}=B_{2\rho}(z)\subset\ B_{2R}(x_0)$. 
Let $x_1\in B_{2R}(x_0)$ and $\rho>0$ be such that $B_{7\rho}(x_1)\subset B_{2R}(x_0)$ and assume for now that $B_\rho(x_1)\cap \partial\Om\not=\emptyset$. Choosing 
$z\in \partial\Om\cap B_\rho(x_1)$ such that $|x_1-z|=d(x_1, \partial\Om)$, we have $|x_1-z_0|\leq \rho$ and thus 
$$B_{\rho/2}(x_1)\subset B_{3\rho/2}(z)\subset B_{6\rho}(z) \subset B_{7\rho}(x_1).$$

With this, applying \eqref{deltaep} we have 
\bea
\label{deltaep-r-x1}
 \int_{B_{\rho/2}(x_1)} |\nabla w|^{p-\delta} \, dx &\apprle  (1+c(\epsilon)) \rho^n \bigg{(} \fintegral_{B_{7\rho}(x_1)} |\nabla w|^{q} \, dx \bigg{)}^{\frac{p-\delta}{q}}\\
 & \qquad +\, (\delta+ \delta^{1-\delta} +\epsilon) \int_{B_{7\rho}(x_1)} |\nabla w|^{p-\delta} \, dx.
\ena

At this point, choosing $\delta$ and $\epsilon$ small enough in \eqref{deltaep-r-x1} we arrive at
\begin{equation*}
 \fintegral_{B_{\rho/2}(x_1)} |\nabla w|^{p-\delta} \, dx \leq c \bigg{(} \fintegral_{B_{7\rho}(x_1)} |\nabla w|^{q} \, dx \bigg{)}^{\frac{p-\delta}{q}} + \frac{1}{2} \fintegral_{B_{7\rho}(x_1)} |\nabla w|^{p-\delta} \, dx.
\end{equation*}

On the other hand, from the interior higher integrability bound \eqref{higher-integrability} in Theorem \ref{regularity}
 it follows that the last inequality also holds with any ball $B_{7\rho}(x_1)\subset B_{2R}(x_0)$ 
such that $B_\rho(x_1)\subset\Om$, as long as we further restrict $\delta_0\in(0,\tdelta_2)$  so that $q>p-\tdelta_2$.
Here $\tdelta_2$ is as in Theorem \ref{regularity}.

Now using the well-known Gehring's lemma (see  \cite[p. 122]{MG}; see also \cite{Geh} and \cite{Mik}) and a simple covering argument, we get the desired higher integrability upto the boundary.
\end{proof}

We now set  $\delta_3= \min\{\delta_1, \tdelta_2, \delta_2\}$ with  $\delta_1, \tdelta_2$, and $\delta_2$ as in Theorems 
\ref{appriori-boundary}, \ref{regularity}, and \ref{higher-integrability-boundary}, respectively.
For $u\in W_0^{1, p-\delta}(\Om)$, $\delta\in (0, \delta_3)$,  we  let $w\in W^{1,\,\pmd}(\Om_{2R}(x_0))$ be a very weak solution  to the Dirichlet problem
\begin{equation}\label{wapprox}
\left\{ \begin{array}{rcl}
 \text{div}~ \aa(x, \nabla w)&=&0   \quad \text{in} ~\Om_{2R}(x_0), \\ 
w&\in& u+W_{0}^{1,\,p-\delta}(\Om_{2R}(x_0))  .
\end{array}\right.
\end{equation}
The existence of such a $w$ is now ensured by Corollary \ref{exist-thick}. Moreover, since we have higher integrability upto the boundary from Theorem \ref{higher-integrability-boundary}, we can now obtain the boundary versions of Lemmas \ref{holderint} and \ref{holderint-nablaw}.
See Lemmas 3.7 and 3.8 in \cite{Ph}.

\begin{lemma}[\cite{Ph}]
Let $u\in W_0^{1, p-\delta}(\Om)$, with $\delta\in (0, \delta_3)$,  
and let $w$ be a very weak solution of   \eqref{wapprox}. 
Then there exists a  $\beta_0=\beta_0(n,p, b, \Lambda_0,\Lambda_1)\in (0, 1/2]$ such that 
\beas\label{decay-w}
\left(\fintegral_{B_{\rho}(z)} |w|^{ p} \, dx\right)^{\frac{1}{p}} \leq C\, (\rho/r)^{\beta_0} \left(\fintegral_{B_{r}(z)} |w|^p \, dx\right)^{\frac{1}{p}}
\enas 
for any $z\in \partial\Om$ with $B_{\rho}(z)\subset B_{r}(z)\Subset B_{2R}(x_0)$. Moreover, there holds 
\beas\label{decay-nablaw}
\left(\fintegral_{B_{\rho}(z)} |\nabla w|^{ p} \, dx\right)^{\frac{1}{p}} \leq C \, (\rho/r)^{\beta_0-1} \left(\fintegral_{B_{r}(z)} |\nabla w|^p \, dx\right)^{\frac{1}{p}}
\enas 
for any $z\in B_{2R}(x_0)$ such that  $B_{\rho}(z)\subset B_{r}(z)\Subset B_{2R}(x_0)$.  Here  $C=C(n, p, b, \Lambda_0,\Lambda_1)$.
\end{lemma}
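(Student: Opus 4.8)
The plan is to prove both decay estimates by combining the boundary higher integrability from Theorem~\ref{higher-integrability-boundary} with the interior estimate of Lemma~\ref{holderint-nablaw} and the classical boundary Hölder continuity of solutions to homogeneous quasilinear equations with zero Dirichlet data. First I would observe that, thanks to Corollary~\ref{exist-thick}, the solution $w$ of \eqref{wapprox} exists, and after extending $w$ by zero from $\Om_{2R}(x_0)$ to $B_{2R}(x_0)$ we have $w\in W^{1,p-\delta}(B_{2R}(x_0))$. Then Theorem~\ref{higher-integrability-boundary} upgrades this to $w\in W^{1,p}_{\rm loc}(B_{2R}(x_0))$, more precisely $w\in W^{1,p+\delta_2}$ on slightly smaller balls with the displayed reverse-Hölder-type bound. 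In particular, $w$ is a \emph{finite energy} solution on interior balls and near the boundary, which is exactly the regime covered by the known regularity theory.

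Next I would invoke the standard boundary Hölder continuity for solutions of $\mathrm{div}\,\aa(x,\nabla w)=0$ with $w=0$ on $\partial\Om\cap B_{2R}(x_0)$, valid because $\Om^c$ is uniformly $p$-thick (the thickness condition gives a Wiener-type criterion and a quantitative modulus of continuity at boundary points; see \cite[Theorem~6.27]{HKM} or \cite{Giu}). This yields, for $z\in\partial\Om$, an oscillation decay of the form $\mathrm{osc}_{B_\rho(z)} w \apprle (\rho/r)^{\beta_0}\,\mathrm{osc}_{B_r(z)} w$, and since $w(z)=0$ this controls $\sup_{B_\rho(z)}|w|$ by $(\rho/r)^{\beta_0}\sup_{B_r(z)}|w|$; passing from sup norms to $L^p$ averages uses the local boundedness estimate (a Caccioppoli/Moser iteration, again available since $w\in W^{1,p}_{\rm loc}$), giving the first inequality. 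For the gradient decay, the point $z$ is allowed to be any point of $B_{2R}(x_0)$: when $B_r(z)$ is an interior ball one simply applies Lemma~\ref{holderint-nablaw} (with $t=p$); when $B_r(z)$ meets $\partial\Om$ one combines the boundary Caccioppoli inequality $\fintegral_{B_{\rho}(z)}|\nabla w|^p\apprle \rho^{-p}\fintegral_{B_{2\rho}(z)}|w|^p$ with the just-proved decay of $|w|$ and with a self-improvement/covering argument as in \cite[Remark~6.12]{Giu}. The net effect is the claimed $(\rho/r)^{\beta_0-1}$ gradient decay, with $\beta_0$ depending only on $n,p,b,\Lambda_0,\Lambda_1$ after possibly shrinking it.

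The genuinely delicate point is the boundary step: one must be careful that the higher integrability exponent $\delta_2$ from Theorem~\ref{higher-integrability-boundary} is compatible with the $\delta$ used in \eqref{wapprox} (this is why $\delta_3=\min\{\delta_1,\tdelta_2,\delta_2\}$ is defined as it is), and that the Hölder exponent and the implicit constants in the boundary regularity depend only on the structural data and the thickness constant $b$, not on the particular boundary point or scale --- this uniformity is exactly what the uniform $p$-thickness hypothesis buys. The interior part is routine given Lemma~\ref{holderint-nablaw}. Since the statement is explicitly attributed to \cite{Ph} (Lemmas~3.7 and~3.8 there), I would simply adapt that argument: the only new feature here compared to \cite{Ph} is that the homogeneous problem \eqref{wapprox} is first regularized via Theorem~\ref{higher-integrability-boundary} before the classical decay estimates are applied, and this regularization has already been carried out in the present section.
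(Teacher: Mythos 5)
Your proposal matches the paper's approach. The paper does not prove this lemma in detail but defers to Lemmas 3.7 and 3.8 of \cite{Ph}, and the only comment it offers is exactly the observation you make: first use Theorem \ref{higher-integrability-boundary} to promote the very weak solution $w$ of \eqref{wapprox} to a finite-energy solution (in $W^{1,p+\delta_2}$ on interior and boundary balls), and then run the classical boundary H\"older decay and Caccioppoli/covering arguments, with the uniform $p$-thickness ensuring the decay exponent $\beta_0$ and the constant depend only on $n,p,b,\Lambda_0,\Lambda_1$. Your description of how the two displayed inequalities are deduced (oscillation decay plus $w=0$ on the boundary for the first; interior Lemma \ref{holderint-nablaw} away from $\partial\Om$, boundary Caccioppoli combined with the $|w|$-decay and a covering step near $\partial\Om$ for the second) is consistent with that, and you correctly flag the exponent bookkeeping that the definition $\delta_3=\min\{\delta_1,\tdelta_2,\delta_2\}$ is designed to handle.
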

\begin{lemma}[\cite{Ph}]\label{holderbdry-nabla} Let $u\in W_0^{1, p-\delta}(\Om)$, with $\delta\in (0, \delta_3)$,  
and let $w$ be a very weak solution of   \eqref{wapprox}. 
Then there exists a  $\beta_0=\beta_0(n,p, b, \Lambda_0,\Lambda_1)\in (0, 1/2]$  such that for any $t\in (0, p]$ there holds
\begin{equation*}
\left(\fintegral_{B_{\rho}(z)} |\nabla w|^{ t} \, dx\right)^{\frac{1}{t}} \leq C\, (\rho/r)^{\beta_0-1} \left(\fintegral_{B_{r}(z)} |\nabla w|^t \, dx\right)^{\frac{1}{t}}
\end{equation*} 
for any $z\in B_{2R}(x_0)$ such that  $B_{\rho}(z)\subset B_{r}(z)\Subset B_{2R}(x_0)$.  Here  $C=C(n, p, b, t, \Lambda_0,\Lambda_1)$.
\end{lemma}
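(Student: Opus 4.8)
The plan is to obtain the $L^t$-gradient decay from the $L^p$-gradient decay just established (the preceding lemma) together with the boundary higher integrability of Theorem \ref{higher-integrability-boundary}, in exactly the way Lemma 3.8 of \cite{Ph} is deduced from its $L^p$-counterpart; the passage between exponents rests on a covering-and-interpolation argument of the type in \cite[Remark 6.12]{Giu}.

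\emph{Step 1: a self-improved reverse H\"older inequality.} Extending $w$ by zero, Theorem \ref{higher-integrability-boundary} gives, for every ball $B$ with $7B\subset B_{2R}(x_0)$,
\begin{align*}
\left(\fintegral_{\frac12 B}|\nabla w|^{p+\delta_2}\,dx\right)^{\frac{1}{p+\delta_2}}
&\leq C\left(\fintegral_{7B}|\nabla w|^{p-\delta_2}\,dx\right)^{\frac{1}{p-\delta_2}}\\
&\leq C\left(\fintegral_{7B}|\nabla w|^{p-\delta}\,dx\right)^{\frac{1}{p-\delta}},
\end{align*}
the last inequality by H\"older's inequality since $\delta<\delta_3\leq\delta_2$. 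Feeding this into the standard downward self-improvement of reverse H\"older inequalities --- via the elementary interpolation bound controlling the $L^{p-\delta}$-average of $|\nabla w|$ on a ball by $(L^{p+\delta_2}\text{-average})^{\theta}\,(L^t\text{-average})^{1-\theta}$ with $\theta\in(0,1)$ determined by $\frac{1}{p-\delta}=\frac{\theta}{p+\delta_2}+\frac{1-\theta}{t}$, a finite-overlap covering to pass from $\frac12 B$ and $7B$ to concentric balls, and an absorption/iteration lemma --- one concludes that for every $t\in(0,p]$ and every ball $B$ with $8B\subset B_{2R}(x_0)$,
\begin{equation*}
\left(\fintegral_{B}|\nabla w|^{p}\,dx\right)^{\frac{1}{p}}\leq C_t\left(\fintegral_{8B}|\nabla w|^{t}\,dx\right)^{\frac{1}{t}},
\end{equation*}
with $C_t=C_t(n,p,b,\Lambda_0,\Lambda_1,t)$. (For $t\in[p-\delta,p]$ this is already immediate from the displayed reverse H\"older estimate together with two applications of H\"older's inequality; the self-improvement is only needed in the range of small $t$.)

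\emph{Step 2: chaining with the $L^p$-decay.} Fix $z\in B_{2R}(x_0)$ with $B_\rho(z)\subset B_r(z)\Subset B_{2R}(x_0)$ and $t\in(0,p]$. We may assume $\rho\leq r/8$, for otherwise $(\rho/r)^{\beta_0-1}\geq1$ and the claim is trivial after enlarging the average. Then, applying successively H\"older's inequality (using $t\leq p$), the preceding lemma with outer radius $r/8$, and Step 1 to the ball $B=B_{r/8}(z)$ --- so that $8B=B_r(z)\Subset B_{2R}(x_0)$ --- we obtain
\begin{align*}
\left(\fintegral_{B_\rho(z)}|\nabla w|^{t}\,dx\right)^{\frac1t}
&\leq\left(\fintegral_{B_\rho(z)}|\nabla w|^{p}\,dx\right)^{\frac1p}\\
&\leq C\,(\rho/r)^{\beta_0-1}\left(\fintegral_{B_{r/8}(z)}|\nabla w|^{p}\,dx\right)^{\frac1p}\\
&\leq C_t\,(\rho/r)^{\beta_0-1}\left(\fintegral_{B_r(z)}|\nabla w|^{t}\,dx\right)^{\frac1t},
\end{align*}
which is the desired estimate, with the same exponent $\beta_0$ as in the $L^p$-version and a constant additionally depending on $t$.

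The only non-routine ingredient is the downward self-improvement in Step 1 --- upgrading a reverse H\"older inequality whose right-hand exponent is the fixed sub-natural $p-\delta$ into one valid with an arbitrarily small exponent $t>0$. This is where the covering argument and the iteration lemma enter, and where the dependence of the constant on $t$ (degenerating as $t\to0$) originates; everything else is H\"older's inequality and bookkeeping of radii.
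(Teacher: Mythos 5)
Your argument is correct and matches the route the paper intends (the paper cites \cite{Ph} and, for the interior analogue Lemma \ref{holderint-nablaw}, explicitly points to the covering/interpolation self-improvement of \cite[Remark 6.12]{Giu}): you downgrade the boundary reverse H\"older inequality of Theorem \ref{higher-integrability-boundary} to arbitrarily small exponents $t>0$ by interpolation, Young's inequality, a finite-overlap covering, and an absorption lemma, and then chain this with the $L^p$-decay of the preceding lemma via Jensen's inequality. The only points worth noting --- the harmless enlargement of the outer ball, the degeneration of the constant as $t\to 0$, and the trivial case $\rho>r/8$ --- are all correctly handled.
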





We now prove the boundary analogue of Lemma \ref{DM}. 
\begin{lemma} \label{DMboundary}
Under \eqref{monotone}-\eqref{ellipticity},
let  $u \in W_0^{1,\pmd}(\Omega)$, $\delta\in (0, \min\{\delta_1, \tdelta_2\})$,  with  $\delta_1$ and $\tdelta_2$ as in Theorems 
\ref{appriori-boundary} and \ref{regularity}, respectively, be a very weak solution to \eqref{rhs-f} with ${\bf f}\in L^{\pmd}(\Om)$.
Let $w \in u + W_0^{1,\pmd}(\Om_{2R})$, $\omegatwo=\Om_{2R}(x_0)$ with $x_0\in \partial\Om$ and
$2R\leq r_0$,  be a very weak solution to \eqref{wapprox}.
Then after extending  ${\bf f}$ and $u$ by zero outside $\Om$ and  $w$
 by $u$ outside $\Om_{2R}$,   we  have
\beas
 \fintegral_{B_{2R}} |\nabla u-\nabla w|^{p-\delta}\, dx  \apprle  \delta^{\frac{p-\delta}{p-1}} \fintegral_{B_{2R}}   |\nabla u|^{p-\delta}\, dx 
+ \fintegral_{B_{2R}} | \bff |^{p-\delta}\, dx
\enas
if $p\geq 2$ and
 \beas
  \fintegral_{B_{2R}} |\nabla u-\nabla w|^{p-\delta}\, dx  & \apprle    \delta^{p-\delta} \fintegral_{B_{2R}} |\nabla u|^{p-\delta}\, dx \, + \\ 
   &\qquad  +\, \bigg{(}\fintegral_{B_{2R}} |\bff|^{p-\delta} \, dx\bigg{)}^{p-1} \bigg{(} \fintegral_{B_{2R}} |\nabla u|^{p-\delta} \, dx \bigg{)}^{2-p} \\
\enas
if $1< p< 2$.  
\end{lemma}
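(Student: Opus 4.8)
\emph{The plan} is to follow the proof of Lemma \ref{DM}, but — since the nonlinear Hodge decomposition is unavailable at the rough part of $\partial\Omega$ — to run instead the Lipschitz-truncation argument of Theorems \ref{appriori-boundary} and \ref{higher-integrability-boundary} for the difference $v:=w-u$, keeping careful track of the powers of $\delta$. After the stated extensions, $v\in W_0^{1,p-\delta}(\Omega_{2R})$, its zero extension lies in $W_0^{1,p-\delta}(B_{2R})$, $\nabla v$ vanishes outside $\Omega_{2R}$, and every integral in the statement is effectively taken over $\Omega_{2R}$. Since $x_0\in\partial\Omega$ and $2R\le r_0$, the complement of $\Omega_{2R}=\Omega\cap B_{2R}(x_0)$ is again uniformly $p$-thick with a constant $b'=b'(n,p,b)$ — inherited from $\Omega^c$ near $\partial\Omega\cap\overline{B_{2R}}$, and from the scale-invariant thickness of a ball's complement near $\Omega\cap\partial B_{2R}(x_0)$ — so Lemmas \ref{Apqg} and \ref{extension-theorem}, Theorem \ref{sobolev-poincare}, and the pointwise Hardy inequality of \cite{PH} all apply on $\Omega_{2R}$ with constants not depending on $R$. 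Fixing $\delta$ in the admissible range and choosing $q\in(p-\delta_0,p-2\delta]$, I let $g:=\max\{\mm(|\nabla v|^q)^{1/q},\,|v|/d(\cdot,\partial\Omega_{2R})\}$ as in Lemma \ref{Apqg}, so that $g\simeq\mm(|\nabla v|^q)^{1/q}$, $\integral_{\Omega_{2R}}g^{p-\delta}\apprle\integral_{\Omega_{2R}}|\nabla v|^{p-\delta}$, and $g^{-\delta}\in\ba_{p/q}$ with bounded constant.

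Because $\Omega_{2R}\subset\Omega$, equation \eqref{rhs-f} holds in $\Omega_{2R}$ against test functions in $W_0^{1,(p-\delta)/(1-\delta)}(\Omega_{2R})$ (by density, using $\aa(x,\nabla u),|\bff|^{p-2}\bff\in L^{(p-\delta)/(p-1)}(\Omega_{2R})$); subtracting the weak form of \eqref{wapprox} gives $\integral_{\Omega_{2R}}(\aa(x,\nabla u)-\aa(x,\nabla w))\cdot\nabla\psi\,dx=\integral_{\Omega_{2R}}|\bff|^{p-2}\bff\cdot\nabla\psi\,dx$ for all such $\psi$. I then apply Lemma \ref{extension-theorem} to $v$ with $s=q$ to obtain, for each $\lambda>0$, a $c\lambda$-Lipschitz $v_\lambda\in W_0^{1,(p-\delta)/(1-\delta)}(\Omega_{2R})$ with $v_\lambda=v$, $\nabla v_\lambda=\nabla v$ on $F_\lambda:=\{x\in\Omega_{2R}:g(x)\le\lambda\}$, $v_\lambda=0$ off $\Omega_{2R}$, and $|\nabla v_\lambda|\le c\lambda$; take $\psi=v_\lambda$, split the domain into $F_\lambda$ and $F_\lambda^c$ (on $F_\lambda$ one has $\nabla v_\lambda=\nabla w-\nabla u$), rearrange to isolate the monotonicity term, multiply by $\lambda^{-(1+\delta)}$ and integrate $d\lambda$ over $(0,\infty)$, interchanging the order of integration by Fubini exactly as in the proofs of Theorems \ref{appriori-boundary} and \ref{higher-integrability-boundary}. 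This leaves, on the left, $\tfrac1\delta\integral_{\Omega_{2R}}g^{-\delta}(\aa(x,\nabla w)-\aa(x,\nabla u))\cdot(\nabla w-\nabla u)\,dx$, equal on the right to the $\lambda$-integrals of the $F_\lambda^c$-part of $(\aa(x,\nabla u)-\aa(x,\nabla w))\cdot\nabla v_\lambda$ and of $|\bff|^{p-2}\bff\cdot\nabla v_\lambda$.

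Next I bound the left side below by \eqref{monotone}: it dominates $\tfrac1\delta\integral_{\Omega_{2R}}g^{-\delta}(|\nabla u|^2+|\nabla w|^2)^{(p-2)/2}|\nabla v|^2\,dx$; for $p\ge2$ this is $\apprge\tfrac1\delta\integral_{\Omega_{2R}}|\nabla v|^{p-\delta}\,dx$ by Hölder's inequality and $\integral g^{p-\delta}\apprle\integral|\nabla v|^{p-\delta}$ (cf.\ \eqref{new-1}), and for $1<p<2$ it is transferred to $\integral_{\Omega_{2R}}|\nabla v|^{p-\delta}\,dx$ by a three-factor Hölder inequality (exponents $\tfrac2{p-\delta},\tfrac2\delta,\tfrac2{2-p}$) together with $\integral g^{p-\delta}\apprle\integral|\nabla v|^{p-\delta}$ and Corollary \ref{exist-thick} ($\integral|\nabla w|^{p-\delta}\apprle\integral|\nabla u|^{p-\delta}$), which produces the factor $(\integral|\nabla u|^{p-\delta})^{(2-p)/(2-\delta)}$, just as in Lemma \ref{DM}. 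The two right-hand error terms are reduced, after changing the order of integration and using $|\nabla v_\lambda|\le c\lambda$ on $F_\lambda^c$, $|\aa(x,\nabla u)-\aa(x,\nabla w)|\apprle|\nabla u|^{p-1}+|\nabla w|^{p-1}$, $g^{-\delta}\le|\nabla v|^{-\delta}$, $g\ge|\nabla v|$, $|\nabla w|\apprle|\nabla u|+|\nabla v|$, to integrals of the shape $\tfrac1\delta\integral|\bff|^{p-1}|\nabla v|^{1-\delta}$, $\tfrac c{1-\delta}\integral g^{1-\delta}|\nabla u|^{p-1}$, $\tfrac c{1-\delta}\integral g^{1-\delta}|\bff|^{p-1}$, and terms $\apprle\integral g^{p-\delta}\apprle\integral|\nabla v|^{p-\delta}$; each of these I bound by \emph{Hölder's inequality} (not Young with a small parameter) together with $\integral g^{p-\delta}\apprle\integral|\nabla v|^{p-\delta}$, obtaining products of $\integral|\nabla v|^{p-\delta}$ with $\integral|\nabla u|^{p-\delta}$ or $\integral|\bff|^{p-\delta}$ raised to the complementary powers $\tfrac{1-\delta}{p-\delta},\tfrac{p-1}{p-\delta}$. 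Only then do I multiply the whole inequality by $\delta$ and apply Young's inequality: this converts the $\delta$ prefactor of $(\integral|\nabla v|^{p-\delta})^{(1-\delta)/(p-\delta)}(\integral|\nabla u|^{p-\delta})^{(p-1)/(p-\delta)}$ into $\delta^{(p-\delta)/(p-1)}$ (and, after the $1<p<2$ bracket manipulation, into $\delta^{p-\delta}$) in front of $\integral|\nabla u|^{p-\delta}$, absorbs the $\integral|\nabla v|^{p-\delta}$-terms (whose coefficients are $O(\delta)$, or made small by an auxiliary $\epsilon$ and then $\delta$), and leaves the $\bff$-contribution as $\integral|\bff|^{p-\delta}$ for $p\ge2$ and as $(\integral|\bff|^{p-\delta})^{p-1}(\integral|\nabla u|^{p-\delta})^{2-p}$ for $1<p<2$; dividing by $|B_{2R}|$ yields the two stated inequalities.

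\emph{The main obstacle}, exactly as in Lemma \ref{DM}, is the order of operations at the end: the error terms must be treated by \emph{lossless} Hölder inequalities so that the factor $\tfrac1\delta$ produced by the $\lambda^{-(1+\delta)}$-integration survives, and Young's inequality must be postponed until the inequality has been multiplied through by $\delta$ — this is precisely what upgrades a crude factor $O(\delta)$ into the sharp $\delta^{(p-\delta)/(p-1)}$ (resp.\ $\delta^{p-\delta}$). The only other point, routine but needed so that the constant does not deteriorate as $R\to0$, is checking that the complement of $\Omega_{2R}$ is uniformly $p$-thick with constants depending only on $n,p,b$.
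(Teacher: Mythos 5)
Your proposal is correct and reproduces the paper's own argument almost verbatim: it sets $v=u-w$, applies Lemma \ref{Apqg} and the Lipschitz truncation of Lemma \ref{extension-theorem} on $\Omega_{2R}$, tests the difference of the two equations with $v_\lambda$, multiplies by $\lambda^{-(1+\delta)}$ and integrates in $\lambda$, bounds $I_1$ below via \eqref{monotone} (with the same two-exponent vs.\ three-exponent H\"older split for $p\ge2$ and $1<p<2$), controls the error terms by lossless H\"older together with Theorem \ref{appriori-boundary}, and only then applies Young's inequality after multiplying through by $\delta$. The added remark that $\Omega_{2R}^c$ inherits uniform $p$-thickness is a correct (and useful) elaboration of a step the paper leaves implicit, but it does not change the route.
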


\begin{proof}
Let $\delta\in (0, \min\{\delta_1, \tdelta_2\})$.  Then $\delta\in (0, \delta_0/2)$
with $\delta_0$ as in Lemma \ref{Apqg}. Let $q\in (p-\delta_0,  p-2\delta]$ and define
$g$ to be the function
$$g(x) = \max \left\{ \mm(|\nabla u - \nabla w|^q)^{1/q}(x), \frac{|u(x)-w(x)|}{d(x,\partial \Om_{2R})} \right\}.$$

Then it follows from   Lemma \ref{Apqg} with $\tilde{\Om}=\Om_{2R}$ that 
\begin{equation}\label{gpdel1}
 \integral_{\Om_{2R}}  g^{p-\delta}\, dx \apprle  \integral_{\Om_{2R}}  |\nabla u-\nabla w|^{p-\delta} \, dx.
\end{equation}

Also, by Theorem \ref{appriori-boundary} we have 
\bea
\label{appriori-boundary-estimate}
\integral_{\Om_{2R}} |\nabla w|^{\pmd}\, dx \apprle \integral_{\Om_{2R}} |\nabla u|^{\pmd}\, dx.
\ena

We now apply Lemma \ref{extension-theorem} with $s=q$, $\tilde{\Om}=\Om_{2R}$ and  $v=u-w$, 
to get a global $c\lambda$-Lipschitz function $v_{\lambda} \in W^{1,\,\frac{p-\delta}{1-\delta}}_{0}(\Om_{2R})$.
Using $v_{\lambda}$  as a test function in \eqref{rhs-f}  and \eqref{wapprox}   along with \eqref{ellipticity-full}, we obtain
\beas
\label{eq4.12-2}
  & \integral_{\Om_{2R} \cap \flam}  \left(\aa(x,\nabla u) - \aa(x,\nabla w)\right) \cdot \nabla \vlam \, dx - \integral_{\Om_{2R} \cap \flam} | \bff |^{p-2} \bff\cdot \nabla \vlam  \, dx    \\
   & \qquad= \integral_{\Om_{2R} \cap \flam^c} \left(\aa(x,\nabla w) - \aa(x,\nabla u)\right) \cdot \nabla \vlam \, dx + \integral_{\Om_{2R} \cap \flam^c} | \bff |^{p-2} \bff \cdot\nabla \vlam \, dx \\
   & \qquad \apprle \lambda \integral_{\Om_{2R} \cap \flam^c} \left( |\bff|^{p-1} + |\nabla u|^{p-1} + |\nabla w|^{p-1} \right) \, dx,\\
\enas
where $F_{\lambda}:=F_{\lambda}({u-w}, \Om_{2R})=\left\{x\in \Om_{2R}: g(x)\leq \lambda \right\}.$
Multiplying the above equation   by $\lambda^{-(1+\delta)}$ and integrating 
from $0$ to $\infty$ with respect to  $\lambda$, we then get
\beas
I_1 - I_2 := &\integral_{0}^{\infty} \integral_{\Om_{2R} \cap \flam}  \lambda^{-(1+\delta)} \left(\aa(x,\nabla u) - \aa(x,\nabla w)\right) \cdot (\nabla u - \nabla w)  \, dx \, d\lambda \\
&  -\integral_{0}^{\infty} \integral_{\Om_{2R} \cap \flam} \lambda^{-(1+\delta)}| \bff |^{p-2} \bff\cdot (\nabla u - \nabla w)    \hspace*{0.05cm} \, dx \, d\lambda   \\
\apprle & \integral_{0}^{\infty}  \integral_{\Om_{2R} \cap \flam^c} \lambda^{-\delta} \left( |\bff|^{p-1} + |\nabla u|^{p-1} + |\nabla w|^{p-1} \right)  \, dx \, d\lambda  =: I_3 .\\
 \enas

We now proceed with the following estimates for $I_1$, $I_2$, and $I_3$.

\noindent {\it Estimate for $I_1$ from below:} By changing the order of integration and making use of \eqref{monotone}, we get
\bea
\label{i1-one}
I_1 & =  \integral_{\Om_{2R} } \integral_{g(x)}^{\infty} \lambda^{-(1+\delta)} \left(\aa(x,\nabla u) - \aa(x,\nabla w)\right) \cdot (\nabla u - \nabla w) \, d\lambda \, dx \\
& = \frac{1}{\delta}  \integral_{\Om_{2R}}  g(x)^{-\delta} \left(\aa(x,\nabla u) - \aa(x,\nabla w)\right) \cdot(\nabla u - \nabla w) \hspace*{0.05cm} \, dx \\
& \apprge \frac{1}{\delta}\integral_{\Om_{2R}} g(x)^{-\delta} \left( |\nabla u|^2 + |\nabla w|^2 \right)^{\frac{p-2}{2}} |\nabla u - \nabla w|^2 \, dx. \\
\ena

We now consider separately the case $p \geq 2$ and $1<p<2$.

\noindent {\it Case i:} For $p \geq 2$,  by using \eqref{gpdel1} along with H\"{o}lder's inequality, we obtain 
\beas
\integral_{\Om_{2R}} & |\nabla u - \nabla w|^{p-\delta}\, dx \\
 & \leq \bigg{(} \integral_{\Om_{2R}} g^{-\delta} |\nabla u - \nabla w|^p   \, dx  \bigg{)}^{\frac{p-\delta}{p}}  \bigg{(} \integral_{\Om_{2R}} g^{p-\delta}\, dx \bigg{)}^{\frac{\delta}{p}}\\
& \leq \bigg{(} \integral_{\Om_{2R}} g^{-\delta} |\nabla u - \nabla w|^2 (|\nabla u|^2 + |\nabla w|^2)^{\frac{p-2}{2}}   \, dx \bigg{)}^{\frac{p-\delta}{p}} \times\\
&\qquad \times  \bigg{(} \integral_{\Om_{2R}} |\nabla u - \nabla w|^{p-\delta}\, dx \bigg{)}^{\frac{\delta}{p}}.
\enas

Simplifying the above expression and substituting into \eqref{i1-one}, we get
\bea
\label{eq-p-big-two}
I_1 \apprge \frac{1}{\delta}\integral_{\Om_{2R}} |\nabla u - \nabla w|^{p-\delta}\, dx .
\ena

\noindent  {\it Case ii:} For $1<p<2$, we use the following  equality
\bea
\label{p-great-twoo}
|\nabla u - \nabla w|^{p-\delta} & = \left[ ( |\nabla u|^2 + |\nabla w|^2)^{\frac{p-2}{2}} |\nabla u - \nabla w |^2 g^{-\delta} \right]^{\frac{p-\delta}{2}} \times \\
 & \qquad\qquad  \times \left( |\nabla u|^2 + |\nabla w|^2 \right)^{\frac{(p-\delta)(2-p)}{4}} g^{\frac{p-\delta}{2}\delta}.
\ena

Integrating \eqref{p-great-twoo} over $\Om_{2R}$ and making use of H\"{o}lder's inequality  with exponents $2/(\pmd), 2/(2-p) $ and $2/\delta$, we get
\bea
\label{p-less-twoo}
 &\integral_{\Om_{2R}}  |\nabla u - \nabla w|^{p-\delta} \, dx \\
 &\leq \left( \integral_{\Om_{2R}} (|\nabla u|^2 + |\nabla w|^2)^{\frac{p-\delta}{2}} \, dx \right)^{\frac{2-p}{2}} \times  \left( \integral_{\Om_{2R}} g(x)^{\pmd} \, dx \right)^{\frac{\delta}{2}} \times \\
 &\qquad\times \left( \integral_{\Om_{2R}} ( |\nabla u|^2 + |\nabla w|^2)^{\frac{p-2}{2}} |\nabla u - \nabla w |^2 g(x)^{-\delta} \, dx \right)^{\frac{p-\delta}{2}}. \\
\ena

Combining \eqref{gpdel1} and \eqref{appriori-boundary-estimate}  into \eqref{p-less-twoo} and then simplifying we get

\beas
&\left(\integral_{\Om_{2R}}  |\nabla u - \nabla w|^{p-\delta} \, dx \right)^{1-\frac{\delta}{2}} \apprle \left( \integral_{\Om_{2R}} |\nabla u|^{p-\delta} \, dx \right)^{\frac{2-p}{2}} \times \\
 &\qquad\qquad \qquad \times \left( \integral_{\Om_{2R}} ( |\nabla u|^2 + |\nabla w|^2)^{\frac{p-2}{2}} |\nabla u - \nabla w |^2 g(x)^{-\delta} \, dx \right)^{\frac{p-\delta}{2}}. \\
\enas

Using this in   \eqref{i1-one}, we arrive at
\bea
\label{p-less-two}
I_1 \apprge \frac{1}{\delta}\left(\integral_{\Om_{2R}} |\nabla u - \nabla w|^{\pmd} \, dx \right)^{\frac{2-\delta}{\pmd}} \left( \integral_{\Om_{2R}} |\nabla u|^{\pmd} \, dx \right)^{\frac{p-2}{\pmd}} .
\ena

\noindent{\it Estimate for $I_2$ from above:} By changing the order of integration, we get
\bea
\label{eq4.12.20}
I_2 & = \integral_{\Om_{2R}}\integral_{g(x)}^{\infty}  \lambda^{-(1+\delta)}| \bff |^{p-2} \bff\cdot (\nabla u - \nabla w)    \, d\lambda  \, dx \\
& = \frac{1}{\delta}\integral_{\Om_{2R}} g(x)^{-\delta} |\bff|^{p-2} \bff \cdot (\nabla u - \nabla w) \, dx \\
& \leq \frac{1}{\delta} \integral_{\Om_{2R}}g(x)^{-\delta} |\bff|^{p-1} |\nabla u - \nabla w| \, dx. \\
\ena

Since $|\nabla u(x) - \nabla w(x)| \leq g(x)$ for a.e. $x$, by using H\"{o}lder's inequality in \eqref{eq4.12.20}, we have
\bea
 \label{eq4.12-21}
 I_2 & \leq \frac{1}{\delta}\integral_{\Om_{2R}} |\nabla u - \nabla w|^{-\delta} | \bff |^{p-1} |\nabla u - \nabla w| \, dx \\
& \leq  \frac{1}{\delta}\left( \integral_{\Om_{2R}} | \bff |^{p-\delta} \, dx \right)^{\frac{p-1}{p-\delta}}   \left(  \integral_{\Om_{2R}} |\nabla u - \nabla w|^{{p-\delta}} \, dx \right)^{\frac{1-\delta}{p-\delta}} .
\ena

\noindent{\it Estimate for $I_3$ from above:} By changing the order of integration, we get
\beas
\label{eq4.12.22}
I_3 & = \integral_{\Om_{2R} }\integral_{0}^{g(x)} \lambda^{-\delta} \left( |\bff|^{p-1} + |\nabla u|^{p-1} + |\nabla w|^{p-1} \right) \, d\lambda  \, dx \\
& = \frac{1}{1-\delta} \integral_{\Om_{2R}} g(x)^{1-\delta}\left( |\bff|^{p-1} + |\nabla u|^{p-1} + |\nabla w|^{p-1} \right) \, dx. \\
\enas

Thus  H\"{o}lder's inequality along with \eqref{gpdel1} and Theorem \ref{appriori-boundary} then yield
\bea
\label{eq4.12.23}
I_3 \apprle  \left( \integral_{\Om_{2R}} |\nabla u - \nabla w|^{p-\delta}\, dx \right)^{\frac{1-\delta}{p-\delta}} \left(  \integral_{\Om_{2R}} 
                       |\bff|^{p-\delta} + |\nabla u|^{p-\delta}  \, dx  \right)^{\frac{p-1}{p-\delta}} .
\ena

As $I_1-I_2 \apprle I_3$, we can now combine estimates  \eqref{eq4.12-21} and \eqref{eq4.12.23},   along with \eqref{eq-p-big-two} in the case $p \geq 2$ or \eqref{p-less-two} in the case $1<p <2$ to obtain the desired bounds.
\end{proof}

\section{Local estimates in Lorentz spaces}

We now recall  an elementary  characterization of functions in  Lorentz spaces, which can easily be proved using methods in standard measure theory.
\begin{lemma}\label{distribution}
 Assume that $g\geq 0$ is a measurable function  in a bounded subset $U \subset \mathbb{R}^{n}$.
 Let $\theta > 0$, $\Lambda > 1$ be constants.
 Then for $0< s, t< \infty$, we have
\[
g \in L(s, t)(U) \Longleftrightarrow S := \sum_{k\geq 1} \Lambda^{t k}|\{ x\in U: g(x) > \theta \Lambda^{k}\}|^{\frac{t}{s}} < +\infty
\]
and moreover the estimate
\[
C^{-1}\, S \leq \|g\|^{t}_{L(s, t)(U)} \leq C\, (|U|^{\frac{t}{s}} + S),
\]
holds where $C>0$ is a constant  depending only on $\theta$, $\Lambda$, and $t$. Analogously, for $0<s<\infty$ and $t=\infty$ we have
$$C^{-1} T\leq \norm{g}_{L(s, \infty)(\Om)}\leq C\, (|\Om|^{\frac{1}{s}}+T), $$
where $T$ is the quantity
$$T:= \sup_{k\geq 1} \Lambda^k |\{ x\in\Om: |g(x)|>\theta \Lambda^k\}|^{\frac{1}{s}}.$$
\end{lemma}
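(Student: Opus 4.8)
The plan is to reduce everything to the elementary relationship between the Lorentz quasinorm and a dyadic sum over super-level sets. Starting from the definition
\[
\|g\|_{L(s,t)(U)}^t = s\integral_0^\infty \alpha^t |\{x\in U: g(x)>\alpha\}|^{t/s}\,\frac{d\alpha}{\alpha},
\]
I would split the integration variable $\alpha$ into the dyadic-like ranges $(\theta\Lambda^{k-1},\theta\Lambda^k]$ for $k\in\ZZ$. On each such interval the integrand is controlled, up to multiplicative constants depending only on $\theta$, $\Lambda$, and $t$, by $\Lambda^{tk}|\{x\in U: g(x)>\theta\Lambda^{k-1}\}|^{t/s}$ from above (using monotonicity of the distribution function and $\alpha\le\theta\Lambda^k$) and by $\Lambda^{tk}|\{x\in U: g(x)>\theta\Lambda^{k}\}|^{t/s}$ from below (using $\alpha>\theta\Lambda^{k-1}$). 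Summing the geometric factor $\int_{\theta\Lambda^{k-1}}^{\theta\Lambda^k}\alpha^{t}\,d\alpha/\alpha \simeq (\theta\Lambda^k)^t$ shows
\[
\|g\|_{L(s,t)(U)}^t \simeq \sum_{k\in\ZZ}\Lambda^{tk}|\{x\in U: g(x)>\theta\Lambda^k\}|^{t/s},
\]
with implicit constants depending only on $\theta,\Lambda,t$.

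Next I would separate the sum over $k\in\ZZ$ into the tail $k\ge 1$ (which is exactly $S$) and the head $k\le 0$. For the head, the key observation is that for every $k$ one has the trivial bound $|\{x\in U: g(x)>\theta\Lambda^k\}|\le |U|$, so
\[
\sum_{k\le 0}\Lambda^{tk}|\{x\in U: g(x)>\theta\Lambda^k\}|^{t/s} \le |U|^{t/s}\sum_{k\le 0}\Lambda^{tk} = \frac{|U|^{t/s}}{1-\Lambda^{-t}},
\]
a finite quantity since $\Lambda>1$ and $t>0$. This immediately yields the upper bound $\|g\|_{L(s,t)(U)}^t\le C(|U|^{t/s}+S)$. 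For the lower bound, since $S$ is a sub-sum of a sum comparable to $\|g\|_{L(s,t)(U)}^t$ and all terms are nonnegative, we get $C^{-1}S\le\|g\|_{L(s,t)(U)}^t$. The equivalence $g\in L(s,t)(U)\Longleftrightarrow S<\infty$ then follows because the head is always finite (so the full sum converges iff the tail does), and $|U|<\infty$ since $U$ is bounded.

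For the case $t=\infty$, the argument is the analogous (and simpler) supremum version: by definition $\|g\|_{L(s,\infty)(U)}=\sup_{\alpha>0}\alpha|\{x\in U: g(x)>\alpha\}|^{1/s}$, and comparing $\alpha$ with the nearest power $\theta\Lambda^k$ shows this supremum over all $\alpha>0$ is comparable to $\sup_{k\in\ZZ}\Lambda^k|\{x\in U: g(x)>\theta\Lambda^k\}|^{1/s}$; splitting into $k\ge1$ and $k\le0$ and bounding the head terms by $|U|^{1/s}\sup_{k\le0}\Lambda^k = |U|^{1/s}$ gives $C^{-1}T\le\|g\|_{L(s,\infty)(U)}\le C(|U|^{1/s}+T)$.

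I do not anticipate a genuine obstacle here — this is a routine measure-theoretic dyadic decomposition. The only point requiring mild care is keeping track of the index shift by one between $\theta\Lambda^{k-1}$ and $\theta\Lambda^k$ when passing from the integral to the sum, so that the comparison constants genuinely depend only on $\theta$, $\Lambda$, and $t$ (and not on $g$, $U$, $s$); reindexing the sum absorbs the shift at the cost of an extra factor $\Lambda^{\pm t}$, which is harmless. The boundedness of $U$ is used solely to guarantee $|U|<\infty$ so that the $|U|^{t/s}$ term (equivalently $|U|^{1/s}$) on the right-hand side is finite.
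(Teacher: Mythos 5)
Your argument is correct, and it is exactly the standard dyadic decomposition that the paper alludes to: the paper gives no proof of this lemma at all, stating only that it ``can easily be proved using methods in standard measure theory,'' and what you have written is that proof. Splitting the Lorentz integral over the intervals $(\theta\Lambda^{k-1},\theta\Lambda^{k}]$, using monotonicity of the distribution function to compare the integrand to its endpoint values, summing the geometrically convergent head $\sum_{k\le 0}\Lambda^{tk}=(1-\Lambda^{-t})^{-1}$, and taking the supremum version for $t=\infty$ are all sound steps that deliver the stated two-sided bound.

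One small caveat, which is really a comment on the lemma statement rather than on your argument: because the Lorentz quasinorm used in this paper carries a multiplicative prefactor $s$, the two-sided comparison between $\|g\|_{L(s,t)(U)}^t$ and $\sum_{k\in\ZZ}\Lambda^{tk}|\{x\in U: g(x)>\theta\Lambda^k\}|^{t/s}$ inherits a linear factor of $s$, so the constant $C$ in fact depends on $s$ as well as on $\theta$, $\Lambda$, $t$. You echo the paper's claim of independence from $s$, which slightly overstates what the computation gives; it is harmless in the paper's applications since $s=q$ ranges over the compact interval $[p-\delta,p+\delta]$, but it is worth being aware that the $s$-dependence does not genuinely disappear. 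The $t=\infty$ case, whose defining quantity has no such prefactor, has no such issue.
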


The following technical lemma is a version of the Calder\'on-
Zygmund-Krylov-Safonov decomposition that has been used in \cite{CP, Ming}.
It allows one to work with balls instead of cubes.
 A proof of this lemma, which uses Lebesgue Differentiation Theorem and the
standard Vitali covering lemma, can be found in \cite{BW2} with obvious modifications to fit the setting here.

\begin{lemma}\label{CZ-theorem}
 Assume that $E \subset \mathbb{R}^n$ is a measurable set for which there exist $c_1, r_1 > 0$ such that
\beas
 |B_t(x) \cap E| \geq c_1 \, |B_t(x)|
\enas
holds for all $x \in E$ and $0 < t \leq r_1$. Fix  $0 < r \leq r_1$ and let $C \subset D \subset E$ be measurable sets for which there
exists $0 < \epsilon < 1$ such that 
\begin{itemize}
 \item $|C| < \ep\, r^n |B_1|$
\item for all $x \in E$ and $\rho \in (0,r]$, if $|C \cap B_{\rho} (x) | \geq \ep\, |B_{\rho}(x)|$, then $B_{\rho}(x) \cap E \subset D$.
\end{itemize}
Then we have the estimate \begin{displaymath} |C| \leq (c_1)^{-1} \ep\, |D|.\end{displaymath} 
\end{lemma}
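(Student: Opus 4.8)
The plan is to reduce the statement to a density/Vitali covering argument at the level of individual points of $C$. First I would invoke the Lebesgue Differentiation Theorem: since $|C|>0$ (the case $|C|=0$ being trivial), almost every $x\in C$ is a point of density $1$ of $C$, so for such $x$ there exists $\rho_x>0$ with $|C\cap B_{\rho_x}(x)|>\ep\,|B_{\rho_x}(x)|$. Because $|C|<\ep\, r^n|B_1|\le \ep\,|B_r(x)|$ (as $|C\cap B_r(x)|\le |C|$), the set of admissible radii is bounded and in fact $\rho_x< r$; hence one may choose $\rho_x$ to be, say, at least half the supremum of admissible radii, so that the balls $B_{\rho_x}(x)$, $x\in C$ (up to a null set), have radii in $(0,r]$ and form a Vitali cover of a full-measure subset of $C$.

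Next I would apply the standard Vitali covering lemma to extract a countable, pairwise disjoint subfamily $\{B_{\rho_i}(x_i)\}_{i}$, with $x_i\in C$ and $\rho_i\in(0,r]$, such that $C$ (minus a null set) is contained in $\bigcup_i B_{5\rho_i}(x_i)$; but I would instead use the more efficient form of Vitali that gives $C\subset \bigcup_i B_{\rho_i}(x_i)$ up to measure zero when the balls are chosen as a fine cover — more precisely, the version stating that a disjoint subfamily can be chosen whose union already has measure at least a fixed fraction of $|C|$, or simply iterate to exhaust $C$. Either way, after discarding a null set, $C\subset\bigcup_i B_{\rho_i}(x_i)$ with the $B_{\rho_i}(x_i)$ pairwise disjoint. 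The point of the density selection is that each chosen ball satisfies $|C\cap B_{\rho_i}(x_i)|\ge\ep\,|B_{\rho_i}(x_i)|$, so the second hypothesis of the lemma applies with $x=x_i\in E$ and $\rho=\rho_i\in(0,r]$, yielding $B_{\rho_i}(x_i)\cap E\subset D$.

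Now I would estimate. On one hand, using $x_i\in E$ and the measure-density hypothesis on $E$ (valid since $\rho_i\le r\le r_1$),
\[
|B_{\rho_i}(x_i)\cap E|\ge c_1\,|B_{\rho_i}(x_i)|,
\]
and since $B_{\rho_i}(x_i)\cap E\subset D$ we get $|B_{\rho_i}(x_i)\cap D|\ge c_1\,|B_{\rho_i}(x_i)|$. On the other hand $C\cap B_{\rho_i}(x_i)$ has measure at least $\ep|B_{\rho_i}(x_i)|$. Comparing these and summing over the disjoint family,
\[
|C|\le \sum_i |C\cap B_{\rho_i}(x_i)| \le \sum_i |B_{\rho_i}(x_i)| \le (c_1)^{-1}\sum_i |D\cap B_{\rho_i}(x_i)| \le (c_1)^{-1}|D|,
\]
where the last inequality uses disjointness of the $B_{\rho_i}(x_i)$. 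This is the desired bound $|C|\le (c_1)^{-1}\ep\,|D|$ — wait, I need the extra factor $\ep$: to recover it, in the chain above I replace $\sum_i|B_{\rho_i}(x_i)|\le\ep^{-1}\sum_i|C\cap B_{\rho_i}(x_i)|$ is the wrong direction, so instead I bound $|C|=\sum_i|C\cap B_{\rho_i}(x_i)|\le \sum_i |B_{\rho_i}(x_i)|\le c_1^{-1}\sum_i|D\cap B_{\rho_i}(x_i)|$, and then note that because of the density selection one actually has $|C\cap B_{\rho_i}(x_i)|\ge \ep |B_{\rho_i}(x_i)| \ge \ep c_1^{-1}|D\cap B_{\rho_i}(x_i)|$ is again the wrong way; the correct route is the one in \cite{BW2}: choose the $\rho_i$ maximal (not merely density radii) so that the reverse density $|C\cap B_{\rho}(x)|<\ep|B_\rho(x)|$ holds for all larger $\rho\le r$, which forces $B_{\rho_i}(x_i)\cap E\subset D$ \emph{and} $|C\cap B_{\rho_i}(x_i)|\le \ep|B_{\rho_i}(x_i)|$ up to doubling, giving $|C|\le \sum_i|C\cap B_{\rho_i}(x_i)|\le \ep\sum_i|B_{\rho_i}(x_i)|\le \ep c_1^{-1}\sum_i|D\cap B_{\rho_i}(x_i)|\le \ep c_1^{-1}|D|$.

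The main obstacle, and the step requiring care, is exactly this bookkeeping of the factor $\ep$: one must select the covering balls as maximal radii for which the density of $C$ stays below $\ep$ (a stopping-time / Calder\'on--Zygmund stopping argument), rather than as Lebesgue density radii, so that simultaneously the structural hypothesis forces $B\cap E\subset D$ and the density bound $|C\cap B|\lesssim\ep|B|$ is available. Everything else — the Vitali selection, the use of the $E$-density hypothesis to pass from $|B\cap E|$ to $|B\cap D|$, and the final summation over disjoint balls — is routine. Since a detailed proof of precisely this statement (with the cube-to-ball modification) is given in \cite{BW2}, I would carry out the stopping-time selection carefully and then refer to \cite{BW2} for the remaining standard details.
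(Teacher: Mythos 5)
Your overall route — Lebesgue differentiation to produce density points, a stopping-time choice of radius, and then Vitali — is exactly the route the paper indicates (it defers to \cite{BW2} and says the proof ``uses Lebesgue Differentiation Theorem and the standard Vitali covering lemma''), and your stopping-time idea of taking $\rho_x$ maximal in $(0,r]$ with $|C\cap B_{\rho_x}(x)|\geq \ep|B_{\rho_x}(x)|$ (equivalently, $=\ep|B_{\rho_x}(x)|$ by continuity of $\rho\mapsto |C\cap B_\rho(x)|/|B_\rho(x)|$ together with $|C\cap B_r(x)|\leq|C|<\ep|B_r(x)|$) is the right fix to get both the structural hypothesis triggered and the upper density bound at larger radii. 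Your instinct that this is where the bookkeeping of $\ep$ lives is correct.

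However, your final chain of inequalities has a genuine gap at the very first step: you write $|C|\leq\sum_i|C\cap B_{\rho_i}(x_i)|$, which would require $C\subset\bigcup_i B_{\rho_i}(x_i)$ up to a null set, but the $5r$-Vitali lemma only gives $C\subset\bigcup_i B_{5\rho_i}(x_i)$ up to a null set (the balls $\{B_{\rho_x}(x)\}$, one per point with a fixed radius near the stopping scale, do \emph{not} form a fine Vitali cover, so the ``more efficient'' Vitali you allude to is unavailable; and if you instead take the fine cover of all admissible small balls, you lose the upper bound $|C\cap B|\leq\ep|B|$ on the selected balls). To close the argument one must estimate $|C\cap B_{5\rho_i}(x_i)|$: if $5\rho_i\leq r$, maximality of $\rho_i$ gives $|C\cap B_{5\rho_i}(x_i)|<\ep|B_{5\rho_i}(x_i)|=5^n\ep|B_{\rho_i}(x_i)|$; if $5\rho_i>r$, then $|C\cap B_{5\rho_i}(x_i)|\leq|C|<\ep r^n|B_1|<5^n\ep|B_{\rho_i}(x_i)|$. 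Summing over the disjoint family and using $|D\cap B_{\rho_i}(x_i)|\geq|E\cap B_{\rho_i}(x_i)|\geq c_1|B_{\rho_i}(x_i)|$ then yields $|C|\leq 5^n c_1^{-1}\ep|D|$; the dimensional factor (hidden in your phrase ``up to doubling'') cannot be dropped, and indeed the paper's own application in Lemma~\ref{technicallemma} writes the constant as $c(n)$, not as $c_1^{-1}\ep$ alone. So the plan is right, but the last paragraph needs to be rewritten with the $5$-dilated balls and the case split made explicit, and the resulting constant acknowledged as dimensional.
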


\begin{remark}\label{fix}
{\rm 
Henceforth, unless otherwise stated, we shall  always consider   $\delta \in (0 , \min\{\tdelta_1, \tdelta_2, \delta_1, \delta_2\})$, where $\tdelta_1, \tdelta_2, \delta_1$, and $\delta_2$ are as in Theorems
\ref{iwaniec}, \ref{regularity},  \ref{appriori-boundary}, and  \ref{higher-integrability-boundary}, respectively.
}\end{remark}

\begin{proposition}\label{first-approx-lorentz}
There exists $A=A(n,p, b, \Lambda_0,\Lambda_1, \gamma)>1$ sufficiently large  so that the following 
holds for any $T>1$ and any $\lambda > 0$. Fix a ball $B_0 = B_{R_0}$ and  assume that for some
ball $B_{\rho} (y)$ with $\rho \leq \min\{r_0 , 2R_0 \}/26$, we have
\beas
 B_{\rho}(y) \cap B_0 \cap \{x \in \RR^n : \mm(\chi_{4B_0} &|\nabla u|^{p-\delta})^{\frac{1}{p-\delta}}(x) \leq \lambda \} \cap \\
 &\{\mm(\chi_{4B_0} |\bff|^{\pdl})^{\frac{1}{p-\delta}} \leq \ep(T)\lambda \} \neq \emptyset, 
\enas
with $\ep(T) = T^{\frac{-2\delta}{\pdl}\,\max\left\{1, \frac{1}{p-1}\right\}}.$
Then there holds
 \begin{equation}\label{largedecay}
  |\{ x \in \RR^n : \mm (\chi_{4B_0}|\nabla u|^{\pdl})^{\frac{1}{\pmd}}(x) > A T \lambda \} \cap B_{\rho}(y)|  <  H \, |B_\rho(y)| ,
\end{equation}
where
\beas
H =H(T)=  T^{-(\ppd)} + \delta^{(p-\delta)\min\left\{1, \frac{1}{p-1}\right\}}. 
\enas 
\end{proposition}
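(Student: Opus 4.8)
The plan is to establish \eqref{largedecay} by comparing $u$ on the ball $B_{\rho}(y)$ with the $\aa$-harmonic replacement $w$ on a slightly larger concentric ball, and then using the good decay properties of $\nabla w$ together with the comparison estimate of Lemma \ref{DM} (or Lemma \ref{DMboundary} if the ball is near $\partial\Omega$). First I would fix notation: set $B=B_{\rho}(y)$ and let $B_{2R}=B_{10\rho}(y)$ (or an appropriately enlarged ball, still contained in $4B_0$ thanks to the smallness restriction $\rho\le\min\{r_0,2R_0\}/26$), and let $w$ solve the homogeneous problem \eqref{firstapprox} (resp. \eqref{wapprox}) on that ball. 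From the nonemptiness hypothesis there is a point $x_*\in B_{\rho}(y)\cap B_0$ at which both $\mm(\chi_{4B_0}|\nabla u|^{p-\delta})^{1/(p-\delta)}(x_*)\le\lambda$ and $\mm(\chi_{4B_0}|\bff|^{p-\delta})^{1/(p-\delta)}(x_*)\le\ep(T)\lambda$; evaluating these maximal functions on balls centered at $x_*$ containing $B_{2R}$ yields the averaged bounds
\[
\fintegral_{B_{2R}} |\nabla u|^{p-\delta}\,dx \apprle \lambda^{p-\delta}, \qquad \fintegral_{B_{2R}} |\bff|^{p-\delta}\,dx \apprle \ep(T)^{p-\delta}\lambda^{p-\delta}.
\]

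Next I would invoke Lemma \ref{DM} (resp. Lemma \ref{DMboundary}) to bound $\fintegral_{B_{2R}}|\nabla u-\nabla w|^{p-\delta}\,dx$. In the case $p\ge 2$ this gives a bound of order $(\delta^{\frac{p-\delta}{p-1}} + \ep(T)^{p-\delta})\lambda^{p-\delta}$; in the case $1<p<2$ the mixed term $(\fintegral|\bff|^{p-\delta})^{p-1}(\fintegral|\nabla u|^{p-\delta})^{2-p}$ is controlled by $\ep(T)^{(p-1)(p-\delta)}\lambda^{p-\delta}$, so altogether by $(\delta^{p-\delta}+\ep(T)^{(p-1)(p-\delta)})\lambda^{p-\delta}$. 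The exact exponent $\frac{-2\delta}{p-\delta}\max\{1,\frac{1}{p-1}\}$ in the definition of $\ep(T)$ is engineered precisely so that, after raising to the relevant power and comparing with $\lambda^{p-\delta}$, the datum contribution is bounded by $T^{-2\delta}\lambda^{p-\delta}$ in all cases; this will feed into the $T^{-(p+\delta)}$ term of $H(T)$. Meanwhile, from Lemma \ref{holderint-nablaw} (resp. Lemma \ref{holderbdry-nabla}) applied with exponent $t=p-\delta$, the function $\nabla w$ satisfies a reverse-Hölder/self-improving decay estimate, so $\|\nabla w\|_{L^{\infty}(\frac12 B_{2R})}$ is controlled by $\big(\fintegral_{B_{2R}}|\nabla w|^{p-\delta}\big)^{1/(p-\delta)}\apprle \lambda$ (using the comparison bound to pass from $\nabla u$ to $\nabla w$). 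Choosing $A$ large depending only on the structural constants, on the set where $\mm(\chi_{4B_0}|\nabla u|^{p-\delta})^{1/(p-\delta)}>AT\lambda$ one must have the contribution from $|\nabla u-\nabla w|$ dominate, because the $\nabla w$ part is pointwise $\apprle\lambda\ll AT\lambda$ and the maximal function localizes: more precisely, on $B_{\rho}(y)$ one estimates $\mm(\chi_{4B_0}|\nabla u|^{p-\delta})\le c\,\mm(\chi_{\frac12 B_{2R}}|\nabla u|^{p-\delta})+c\lambda^{p-\delta}$, split $|\nabla u|^{p-\delta}\apprle|\nabla w|^{p-\delta}+|\nabla u-\nabla w|^{p-\delta}$, and use $L^1\to L^{1,\infty}$ weak boundedness of $\mm$ on the difference term.

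Finally, collecting these pieces: by weak-$(1,1)$ for $\mm$,
\[
|\{\mm(\chi_{4B_0}|\nabla u|^{p-\delta})^{1/(p-\delta)}>AT\lambda\}\cap B_{\rho}(y)| \apprle \frac{1}{(AT\lambda)^{p-\delta}}\integral_{B_{2R}}|\nabla u-\nabla w|^{p-\delta}\,dx,
\]
and inserting the comparison bound above, the right side is
\[
\apprle \frac{|B_{\rho}(y)|}{(AT)^{p-\delta}}\Big(1 + T^{2\delta\max\{1,\frac1{p-1}\}}(\delta^{(p-\delta)\min\{1,\frac1{p-1}\}} + \ldots)\Big) \apprle |B_{\rho}(y)|\big(T^{-(p+\delta)} + \delta^{(p-\delta)\min\{1,\frac1{p-1}\}}\big),
\]
after absorbing powers of $A$ and using $T>1$; this is exactly $H(T)|B_{\rho}(y)|$. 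The main obstacle I anticipate is the bookkeeping with the two regimes $p\ge2$ and $1<p<2$: the comparison lemma has genuinely different structure in the two cases, and one has to verify that the single choice $\ep(T)=T^{\frac{-2\delta}{p-\delta}\max\{1,\frac1{p-1}\}}$ and the single exit exponent $\min\{1,\frac1{p-1}\}$ in $H$ work uniformly — in particular, in the subquadratic case the interplay between the exponents $p-1$ and $2-p$ in the mixed datum term must be reconciled with the weak-type $(1,1)$ loss of $(AT\lambda)^{-(p-\delta)}$, which is where the $\max$ and $\min$ truncations originate. A secondary technical point is ensuring all the enlarged balls remain inside $4B_0$ and that the boundary case (when $B_{\rho}(y)$ meets $\partial\Omega$) is handled by the boundary analogues (Lemma \ref{DMboundary}, Lemma \ref{holderbdry-nabla}) rather than the interior ones, with the radius restriction $2R\le r_0$ respected.
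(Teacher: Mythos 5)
Your overall plan is the same as the paper's: compare $u$ with the $\aa$-harmonic replacement $w$, localize the maximal function, use Lemmas \ref{DM}/\ref{DMboundary} for the difference, and split into interior and boundary cases with the enlarged balls sitting inside $4B_0$. The exponent bookkeeping for $\ep(T)$ and the $\delta^{(p-\delta)\min\{1,1/(p-1)\}}$ piece of $H$ is also essentially right. But there is one step that is genuinely wrong and not repairable as stated.

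You claim that Lemma \ref{holderint-nablaw} (resp.\ Lemma \ref{holderbdry-nabla}) gives $\|\nabla w\|_{L^\infty(\frac12 B_{2R})}\apprle\lambda$, and then you use this to discard the $\nabla w$ contribution entirely, keeping only the weak-$(1,1)$ bound on $|\nabla u-\nabla w|^{p-\delta}$. That $L^\infty$ bound does not hold here. Lemma \ref{holderint-nablaw} is a Morrey-type decay estimate with the \emph{negative} exponent $\beta_0-1$ (its factor $(\rho/r)^{\beta_0-1}$ grows as $\rho\to 0$); it is a scale-invariant reverse-H\"older statement, not pointwise control. Under the paper's hypotheses the coefficients of $\mathcal{A}$ are merely measurable in $x$, so $\nabla w$ is generally unbounded. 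Consequently the set $\{\mm(\chi_{B_{2\rho}(y)}|\nabla w|^{p-\delta})^{1/(p-\delta)}>AT\lambda/2\}$ cannot be made empty by choosing $A$ large, and it must be estimated.

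Moreover this term is where the $T^{-(p+\delta)}$ part of $H(T)$ is actually \emph{needed}, not merely produced as a byproduct. If you estimate the $\nabla w$ piece using only the weak-$(1,1)$ bound and $\fintegral_{B_{2\rho}}|\nabla w|^{p-\delta}\apprle\lambda^{p-\delta}$, you get a contribution of order $(AT)^{-(p-\delta)}|B_\rho(y)|$, which is \emph{not} $\leq H|B_\rho(y)|$ uniformly: in Lemma \ref{technicallemma} and Theorem \ref{Lorentz-p-thick} one eventually needs $c\,(AT)^tH^{t/q}\leq 1/2$ with $q$ allowed up to $p+\delta/2$, so one needs $H\apprle T^{-(p+\delta)}+\delta^{\ldots}$; a $T^{-(p-\delta)}$ term would make $(AT)^qH$ unbounded in $T$ and the buckling step would fail. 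What the paper actually does is invoke the interior higher integrability Theorem \ref{regularity} (resp. the boundary Theorem \ref{higher-integrability-boundary}) to bound $\fintegral_{B_{2\rho}(y)}|\nabla w|^{p+\delta}$, and then apply the weak-type $\bigl(\frac{p+\delta}{p-\delta},\frac{p+\delta}{p-\delta}\bigr)$ estimate of $\mm$ to the function $|\nabla w|^{p-\delta}$; this yields the factor $(AT\lambda)^{-(p+\delta)}$, hence the $T^{-(p+\delta)}$ in $H(T)$. This is the self-improving (Gehring) property of the reverse-H\"older inequality rather than any $L^\infty$ bound, and it is exactly the mechanism that your $L^\infty$ claim tries to substitute for. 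Replace the $L^\infty$ step by the higher-integrability argument and your proposal lines up with the paper's proof.
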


\begin{proof}
 By hypothesis, there exists $x_0 \in B_{\rho}(y) \cap B_0$ such that for any $r > 0$, we have
\begin{equation}
\label{MaxCon}
 \fintegral_{B_r (x_0)} \chi_{4B_0} |\nabla u|^{\pmd} \, dx \leq \lambda^{p-\delta}  
\end{equation}
and 
\begin{equation}
\label{MaxCon1}
\fintegral_{B_r (x_0)} \chi_{4B_0} |\bff|^{\pmd} \, dx \leq [\ep(T) \lambda]^{p-\delta}.
\end{equation}

Since $8 \rho \leq R_0$, we have $B_{23\rho}(y) \subset B_{24\rho}(x_0) \subset 4B_0$. We now claim that 
for $x \in B_{\rho}(y)$, there holds
\begin{equation}\label{localizemax}
{\mm}(\chi_{4B_0}|\nabla u|^{\pmd})(x)\leq \max\left\{{\mm}(\chi_{B_{2\rho}(y)}|\nabla u|^{\pmd})(x), 3^n \lambda^{\pmd} \right\}. 
\end{equation}
Indeed, for $r\leq \rho$ we have $B_r(x)\cap 4B_0\subset B_{2\rho}(y)\cap 4B_0=B_{2\rho}(y)$ and thus
\beas
\fintegral_{B_r(x)} \chi_{4B_0} |\nabla u|^{\pmd}\, dz=\fintegral_{B_r(x)} \chi_{B_{2\rho}(y)} |\nabla u|^{\pmd}\, dz, 
\enas
whereas  for $r>\rho$ we have $B_r(x)\subset B_{3r}(x_0)$ from which, by making use of \eqref{MaxCon}, yields 
\beas
\fintegral_{B_r(x)} \chi_{4B_0} |\nabla u|^{\pmd}\, dz \leq 3^n \fintegral_{B_{3r}(x_0)} \chi_{4B_0} |\nabla u|^{\pmd}\, dz \leq 3^n\lambda^{p-\delta}.
\enas

We now restrict $A$ to the range $A\geq 3^\frac{n}{\pmd}$. Then in view of \eqref{localizemax} we see that in order to obtain \eqref{largedecay},  it is enough to show that 
\begin{equation}\label{largedecay-smaller}
|\{ {\mm}(\chi_{B_{2\rho}(y)}|\nabla u|^{\pmd})^{\frac{1}{p-\delta}} > A T \lambda\} \cap B_{\rho}(y)|< H \, |B_{\rho}(y)|.
\end{equation}

Moreover, since $|\nabla u|=0$ outside $\Om$, the later inequality trivially holds provided $B_{4\rho}(y)\subset\RR^n\setminus\Om$, thus it is enough to consider \eqref{largedecay-smaller} for the case  $B_{4\rho}(y)\subset\Om$ and the case $B_{4\rho}(y)\cap \partial\Om\not=\emptyset$.

Let us first consider the interior case, i.e., $B_{4\rho}(y) \subset \Omega$. Let $w = u + W_0^{1,\pmd} (B_{4\rho})(y)$ be 
a solution, obtained from Corollary \ref{iwaniec-exist}, to the problem 
\beas\label{firstapproxf}
\left\{ \begin{array}{rcl}
 \text{div}~ \aa(x, \nabla w)&=&0   \quad \text{in} ~B_{4\rho}(y), \\ 
w&=&u  \quad \text{on}~ \partial B_{4\rho}(y).
\end{array}\right.
\enas

By the weak type $(1,1)$ estimate for the maximal function, we have
\bea
\label{XM}
\lefteqn{|\{ {\mm}(\chi_{B_{2\rho}(y)}|\nabla u|^{\pmd})^{\frac{1}{p-\delta}} > A T \lambda\} \cap B_{\rho}(y)|}\\
&\qquad  \leq   |\{ {\mm}(\chi_{B_{2\rho}(y)}|\nabla w|^{\pmd})^{\frac{1}{p-\delta}} > A T \lambda/2\} \cap B_{\rho}(y)|\\
& \qquad \qquad+\,  |\{ {\mm}(\chi_{B_{2\rho}(y)}|\nabla u- \nabla w|^{\pmd})^{\frac{1}{p-\delta}} > A T \lambda/2\} \cap B_{\rho}(y)| \\
&\qquad  \apprle (AT\lambda)^{-(p+\delta)} \integral_{B_{2\rho}(y)} |\nabla w|^{\ppd} \, dx +\\
&\qquad\qquad  +\, (AT\lambda)^{-(p-\delta)}\integral_{B_{2\rho}(y)}|\nabla u-\nabla w|^{\pmd}\, dx.
\ena

On the other hand, applying Theorem \ref{regularity}, we get
\bea
\label{interior-diff}
\fintegral_{B_{2\rho}(y)} |\nabla w|^{\ppd} \, dx   \apprle & \left(\fintegral_{B_{4\rho}(y)} |\nabla u|^{\pmd} \, dx \right)^{\frac{\ppd}{\pmd}}\\
& + \left( \fintegral_{B_{4\rho}(y)} |\nabla u - \nabla w|^{\pmd} \, dx \right)^{\frac{\ppd}{\pmd}},  
\ena
whereas by \eqref{MaxCon}-\eqref{MaxCon1} and Lemma \ref{DM} there hold
\bea
\label{X1}
\fintegral_{B_{4\rho}(y)} |\nabla u|^{\pmd} \, dx \apprle \fintegral_{B_{5\rho}(x_0)} |\nabla u|^{\pmd} \, dx \apprle \lambda^{p-\delta}
\ena
and 
\bea
\label{X2}
\fintegral_{B_{4\rho}(y)}& |\nabla u - \nabla w|^{\pmd} \, dx \\
&\apprle \delta^{(p-\delta)\min\left\{1, \frac{1}{p-1}\right\}} \lambda^{p-\delta} + [\epsilon(T)^{\min\{1, p-1\}}\lambda]^{p-\delta}\\
&\apprle \lambda^{p-\delta}\Big[\delta^{(p-\delta)\min\left\{1, \frac{1}{p-1}\right\}}  + T^{-2\delta}\Big],
\ena
where we used  $B_{4\rho}(y) \subset B_{5\rho}(x_0)$ and the definition of $\epsilon(T)$.

Combining \eqref{XM}-\eqref{X2} we now obtain
\beas
\label{max2B}
& |\{  {\mm}(\chi_{B_{2\rho}(y)}|\nabla u|^{\pmd})^{\frac{1}{p-\delta}} > A T \lambda\} \cap B_{\rho}(y)| \\
&\quad \apprle  |B_\rho(y)| (AT)^{-(\ppd)}  \left[1+ \delta^{(p+\delta)\min\left\{1, \frac{1}{p-1}\right\}} + T^{-2\delta\frac{\ppd}{\pmd}} \right]\\
& \quad \quad  +\, |B_\rho(y)|(AT)^{-(p-\delta)} \left[ \delta^{(p-\delta)\min\left\{1, \frac{1}{p-1}\right\}} + T^{-2\delta} \right]  \\
&\quad \apprle  |B_\rho(y)|\, A^{-(\pmd)} T^{-(\ppd)}   + |B_\rho(y)|\, A^{-(\pmd)} \delta^{(p-\delta)\min\left\{1, \frac{1}{p-1}\right\}} 
\enas
since $A, T>1$ and $\delta\in (0, 1)$.

At this point, we  can take $A$ sufficiently large to get the desired estimates in the interior case $B_{4\rho}(y)\subset\Om$.

We now look at the boundary case when $B_{4\rho}(y) \cap \partial \Omega \neq \emptyset$. Recall that $u\in W^{1, \pmd}_0(\Om)$. Let $y_0 \in \partial \Omega$
be a boundary point such that $|y-y_0| = \rm{dist}(y,\partial \Omega)$. Define $w \in u + W_0^{1,\pmd} (\Omega_{32\rho} (y_0))$ as a solution
to the problem
\begin{equation*}
\left\{ \begin{array}{rcl}
 \text{div}~ \aa(x, \nabla w)&=&0   \quad \text{in} ~\Om_{32\rho}(y_0), \\ 
w&=&u  \quad \text{on}~ \partial \Om_{32\rho}(y_0).
\end{array}\right.
\end{equation*}
Here we first extend $u$ to be zero on $\RR^n\setminus\Om$ and then extend $w$ to be $u$ on $\RR^n\setminus\Om_{16\rho}(y_0)$. 
Since 
\beas B_{28\rho}(y)\subset B_{32\rho}(y_0) \subset B_{36\rho}(y)\subset B_{37\rho}(x_0)\subset 4B_0,
\enas
we then obtain by making use of Theorem \ref{higher-integrability-boundary}, 

\beas
 & \Big( \fintegral_{B_{2\rho}(y)} |\nabla w|^{\ppd}\, dx \Big)^{\frac{\pmd}{\ppd}}  \apprle \fintegral_{B_{28\rho}(y) }|\nabla w|^{\pmd} dx\\
& \qquad \apprle  \fintegral_{B_{37\rho}(x_0)} |\nabla u|^{\pmd}\, dx + \fintegral_{B_{32\rho}(y_0)} |\nabla u - \nabla w|^{\pmd} \, dx.
 \enas

Now using \eqref{MaxCon}-\eqref{MaxCon1} and Lemma \ref{DMboundary}  in \eqref{XM},   
we obtain the desired estimate in the boundary case.  The details are left to the interested reader. 

\end{proof}

The above proposition can be restated in the following way. 
\begin{proposition}\label{Byun-Wang-int-restate}
There exists a constant $A=A(n, p, b, \Lambda_0,\Lambda_1, \gamma)>1$  
such that the following holds for any $T > 1$ and any $\lambda>0$.
Let $u\in W^{1, \pmd}_0(\Om)$  be a  solution of \eqref{basicpde} with   $\aa$ satisfying \eqref{monotone}-\eqref{ellipticity}.
Fix a ball $B_0=B_{R_0}$, and suppose that for some ball $B_\rho(y)$ with $\rho\leq \min\{r_0, 2R_0\}/26$  we have
\begin{equation*}
|\{ x\in \RR^n: {\mm}(\chi_{4B_0}|\nabla u|^{\pmd})^{\frac{1}{\pmd}}(x) > A T \lambda\} \cap B_{\rho}(y)|\geq H\, |B_{\rho}(y)|.
\end{equation*}
Then there holds
\begin{equation*}
B_{\rho}(y)\cap B_0 \subset \{{\mm}(\chi_{4B_0}|\nabla u|^{\pmd})^{\frac{1}{\pmd}}>\lambda\}\cup \{ {\mm}(\chi_{4B_0}|\bff|^{\pmd})^{\frac{1}{\pmd}}> \epsilon(T)\lambda \}.
\end{equation*}
Here  $\epsilon(T)$ and $H=H(T)$ are as defined in Proposition \ref{first-approx-lorentz}.
\end{proposition}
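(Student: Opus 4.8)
The plan is to obtain Proposition \ref{Byun-Wang-int-restate} as the logical contrapositive of Proposition \ref{first-approx-lorentz}; no new analytic input is needed, since all of the substance — the comparison estimates of Lemmas \ref{DM} and \ref{DMboundary}, the higher integrability of Theorems \ref{regularity} and \ref{higher-integrability-boundary}, and the weak-type $(1,1)$ bound for $\mm$ — has already been carried out in the proof of Proposition \ref{first-approx-lorentz}. So the only thing to do here is to phrase that result in the contrapositive form that is directly usable in the Calder\'on--Zygmund covering step later on.

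First I would argue by contradiction. Suppose the asserted inclusion fails, so that there is a point
\[
x_* \in B_{\rho}(y)\cap B_0 \quad \text{with} \quad {\mm}(\chi_{4B_0}|\nabla u|^{\pmd})^{\frac{1}{\pmd}}(x_*)\leq \lambda \ \text{ and }\ {\mm}(\chi_{4B_0}|\bff|^{\pmd})^{\frac{1}{\pmd}}(x_*)\leq \epsilon(T)\lambda .
\]
Then $x_*$ lies in the intersection appearing in the hypothesis of Proposition \ref{first-approx-lorentz}, so that intersection is nonempty. Since the radius restriction $\rho\leq \min\{r_0,2R_0\}/26$ is precisely the one required there, and since $\epsilon(T)$ is defined in the same way, Proposition \ref{first-approx-lorentz} applies with the same constant $A=A(n,p,b,\Lambda_0,\Lambda_1,\gamma)$ and gives
\[
|\{ x\in\RR^n: {\mm}(\chi_{4B_0}|\nabla u|^{\pmd})^{\frac{1}{\pmd}}(x) > AT\lambda\}\cap B_{\rho}(y)| < H\,|B_{\rho}(y)|,
\]
with $H=H(T)$ as defined there. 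This directly contradicts the standing assumption that the same set has measure at least $H\,|B_{\rho}(y)|$. Hence the inclusion must hold, which is the conclusion of the proposition.

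The only thing to keep straight is the bookkeeping: the two statements use identical hypotheses on the radius $\rho$, identical quantities $\epsilon(T)$ and $H(T)$, and the same large constant $A$, so the contrapositive transcribes verbatim. Accordingly there is no real obstacle here — the content lives entirely in Proposition \ref{first-approx-lorentz}, and this reformulation is recorded only because the ``$\geq H\,|B_{\rho}(y)|$'' form is the hypothesis that feeds directly into Lemma \ref{CZ-theorem} in the subsequent Lorentz-space iteration.
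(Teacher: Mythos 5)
Your proof is correct and is precisely the paper's approach: the paper explicitly introduces Proposition \ref{Byun-Wang-int-restate} as a restatement of Proposition \ref{first-approx-lorentz}, and the contrapositive argument you give is exactly what that restatement amounts to.
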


We can now apply Lemma \ref{CZ-theorem} and the previous proposition to get the following result.
\begin{lemma}\label{technicallemma}
There exists a constant $A=A(n, p, b, \Lambda_0,\Lambda_1, \gamma)>1$  
such that the following holds for any $T > 2$. 
Let $u$ be a  solution of \eqref{basicpde} with $\aa$ satisfying \eqref{monotone}-\eqref{ellipticity}, and 
let $B_0$ be a ball  of radius $R_0$. Fix a real number $0<r\leq\min\{r_0, 2R_0\}/26$ and suppose that there
exists $N>0$ such that 
\begin{equation}\label{hypo1bdry}
|\{x\in \RR^n: {\mm}(\chi_{4B_0}|\nabla u|^{\pmd})^{\frac{1}{p-\delta}}(x) > N \}| < H \, r^n|B_{1}|.
\end{equation}
Then for any  integer $k\geq 0$ there holds
\beas
&|\{x\in B_0: {\mm}(\chi_{4B_0}|\nabla u|^{\pmd})^{\frac{1}{p-\delta}}(x)> N(AT) ^{k+1}\}|\\
&\qquad\leq c(n)\, H\,  |\{x\in B_0: {\mm}(\chi_{4B_0}|\nabla u|^{\pmd})^{\frac{1}{p-\delta}}(x) > N(AT) ^{k}\}|\\
& \qquad \qquad+\, c(n)\, |\{ x\in B_0 : {\mm}(\chi_{4B_0}|\bff|^{\pmd})^{\frac{1}{p-\delta}}(x) > \epsilon(T) N(AT)^{k}\}|.
\enas
Here $\epsilon(T)$ and $H=H(T)$ are as defined in Proposition \ref{first-approx-lorentz}.
\end{lemma}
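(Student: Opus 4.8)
The plan is to derive this iteration lemma from Proposition \ref{Byun-Wang-int-restate} via an application of the Calder\'on--Zygmund--Krylov--Safonov covering Lemma \ref{CZ-theorem}. First I would fix the ambient set $E := \overline{B_0}$ (or a slightly enlarged ball), which satisfies the density condition $|B_t(x)\cap E|\geq c_1|B_t(x)|$ for all $x\in E$ and $0<t\leq r_1:=2R_0$ with a dimensional constant $c_1=c_1(n)$, since $B_0$ is a ball. For each integer $k\geq 0$ I would set
\begin{align*}
C_k &:= \{x\in B_0: \mm(\chi_{4B_0}|\nabla u|^{\pmd})^{\frac{1}{\pmd}}(x) > N(AT)^{k+1}\},\\
D_k &:= \{x\in B_0: \mm(\chi_{4B_0}|\nabla u|^{\pmd})^{\frac{1}{\pmd}}(x) > N(AT)^{k}\}\ \cup\\
&\qquad\quad\ \{x\in B_0: \mm(\chi_{4B_0}|\bff|^{\pmd})^{\frac{1}{\pmd}}(x) > \epsilon(T) N(AT)^{k}\},
\end{align*}
so that $C_k\subset D_k\subset E$. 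The goal is to check the two hypotheses of Lemma \ref{CZ-theorem} with $\ep := c(n) H$ and $r$ as given, after which the conclusion $|C_k|\leq c_1^{-1}\ep\,|D_k|$ is exactly the claimed inequality (absorbing $c_1^{-1}$ into $c(n)$).

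For the smallness hypothesis $|C_k| < \ep\, r^n|B_1|$: since $AT>2$ and $N(AT)^{k+1}\geq N$, we have $C_k\subset \{x\in\RR^n : \mm(\chi_{4B_0}|\nabla u|^{\pmd})^{\frac{1}{\pmd}}(x) > N\}$, so by hypothesis \eqref{hypo1bdry} (and monotonicity of the sets in $N$) we get $|C_k| < H\,r^n|B_1|\leq \ep\,r^n|B_1|$ once $c(n)\geq 1$. For the second, covering-type hypothesis: suppose $x\in E$ and $\rho\in(0,r]$ satisfy $|C_k\cap B_\rho(x)|\geq \ep\,|B_\rho(x)|$. I claim $B_\rho(x)\cap E\subset D_k$. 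Arguing by contraposition, if $B_\rho(x)\cap E\not\subset D_k$ there is a point in $B_\rho(x)$ at which both maximal functions are below the respective thresholds $N(AT)^k$ and $\epsilon(T)N(AT)^k$; that is, writing $\lambda := N(AT)^k$, the set $B_\rho(x)\cap B_0\cap\{\mm(\chi_{4B_0}|\nabla u|^{\pmd})^{\frac{1}{\pmd}}\leq \lambda\}\cap\{\mm(\chi_{4B_0}|\bff|^{\pmd})^{\frac{1}{\pmd}}\leq\epsilon(T)\lambda\}$ is nonempty (one must note that the point lies in $B_0$; if it lies only in $E\setminus B_0$ a slightly larger concentric ball $B_0'$ in place of $B_0$, or an elementary covering argument, removes this technicality). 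Since $\rho\leq r\leq\min\{r_0,2R_0\}/26$, Proposition \ref{Byun-Wang-int-restate} applies and yields
\begin{equation*}
|\{x\in\RR^n: \mm(\chi_{4B_0}|\nabla u|^{\pmd})^{\frac{1}{\pmd}}(x) > AT\lambda\}\cap B_\rho(x)| < H\,|B_\rho(x)|.
\end{equation*}
But $C_k\cap B_\rho(x)$ is contained in the left-hand set (since $AT\lambda = N(AT)^{k+1}$), so $|C_k\cap B_\rho(x)| < H\,|B_\rho(x)|\leq \ep\,|B_\rho(x)|$ provided we pick $c(n)\geq 1$, contradicting the assumption $|C_k\cap B_\rho(x)|\geq\ep\,|B_\rho(x)|$. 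This verifies the second hypothesis, and Lemma \ref{CZ-theorem} finishes the proof.

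The one point requiring care — and the main obstacle in writing the argument cleanly — is the interface between the two formulations: Lemma \ref{CZ-theorem} is stated with the ambient set $E$ and the "good" behaviour $B_\rho(x)\cap E\subset D$, whereas Proposition \ref{Byun-Wang-int-restate} is phrased in terms of $B_\rho(y)\cap B_0$ and uses the normalized ball condition $\rho\leq\min\{r_0,2R_0\}/26$. One should take $E$ to be a ball slightly larger than $B_0$ so that the density constant $c_1$ and the range $r_1$ are uniform, and verify that the contrapositive extraction of a point where both maximal functions are small can be taken in $B_0$ (not merely $E$), which is automatic because outside $4B_0$ the truncated maximal functions only decrease. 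All remaining constants are dimensional and harmless, so this is really a bookkeeping exercise once the translation between the two statements is set up correctly.
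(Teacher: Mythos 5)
Your proposal follows the paper's approach, but you have introduced an unnecessary technical wrinkle by taking $E=\overline{B_0}$ (or a slightly larger ball). The paper simply takes $E=B_0$ and $D = D_1\cap B_0$. An open ball already satisfies the density hypothesis of Lemma~\ref{CZ-theorem} with a dimensional constant (for $x\in B_0$ and $t\le 2R_0$, slide $x$ distance $t/4$ toward the center to find a ball of radius $t/4$ inside $B_t(x)\cap B_0$), so there is no need to enlarge $E$. Once $E=B_0$, the complication you flag — that the extracted point might lie in $E\setminus B_0$ and hence tell you nothing about the maximal functions — simply does not arise: any point of $B_\rho(x)\cap B_0$ outside $D=D_1\cap B_0$ is automatically a point of $B_0\setminus D_1$, which is exactly what Proposition~\ref{first-approx-lorentz} requires. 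Moreover, the fix you sketch, replacing $B_0$ by a larger concentric ball $B_0'$ in the definition of $D_k$, would change the sets whose measures appear in the conclusion and so cannot be used as stated. A further minor point: the paper directly invokes the restated Proposition~\ref{Byun-Wang-int-restate} (``if $|C\cap B_\rho(x)|\ge H|B_\rho(x)|$ then $B_\rho(x)\cap B_0\subset D_1$''), whereas you re-derive that implication by contraposition from Proposition~\ref{first-approx-lorentz}; both are logically equivalent, but the former is cleaner since the reformulation is already available. Finally, the choice $\ep=c(n)H$ in place of $\ep=H$ is harmless but superfluous and requires the caveat $\ep<1$; taking $\ep=H$ matches the conclusion of Lemma~\ref{CZ-theorem} directly.
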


\begin{proof} Let $A$ be as in Proposition \ref{Byun-Wang-int-restate} and set
\beas
C = \{x \in B_0 : {\mm}(\chi_{4B_0}|\nabla u|^{\pmd})^{\frac{1}{p-\delta}}(x) > N(AT)^{k+1}\},\quad D = D_1\cap B_0,
\enas
where $D_1$ is the union
\begin{eqnarray*}
D_1&=&\{{\mm}(\chi_{4B_0}|\nabla u|^{\pmd})^{\frac{1}{\pmd}}(x) > N(AT)^{k} \} \\
&& \cup\, \{{\mm}(\chi_{4B_0}|\bff|^{\pmd})^{\frac{1}{\pmd}}(x) >\epsilon(T) N(AT)^{k}\}.
\end{eqnarray*}
with  $\epsilon(T)$ and $H$ being as defined in Proposition \ref{first-approx-lorentz}.

Since $AT>1$  the assumption (\ref{hypo1bdry}) implies that $|C| < H \, r^n |B_{1}|$. Moreover, if $x\in B_0$ and $\rho \in (0, r]$ 
such that $|C\cap B_{\rho}(x)|  \geq H\, |B_{\rho}(x)|, $
then using Proposition \ref{Byun-Wang-int-restate} with $\lambda=N(AT)^k$ we have 
\beas
B_{\rho}(x) \cap B_0\subset D.
\enas
Thus  the hypotheses of Lemma \ref{CZ-theorem} 
are satisfied with $E=B_0$ and $\epsilon=H\in (0, 1)$.
This yields
\beas
|C|&\leq c(n)\,H\, |D|\\
&\leq c(n)\, H\, |\{x\in B_0: {\mm}(\chi_{4B_0}|\nabla u|^{\pmd})^{\frac{1}{\pmd}}(x) > N(AT)^k \}| \\
& \quad +\, c(n)\, |\{ x\in B_0: {\mm}(\chi_{4B_0}|\bff|^{\pmd})^{\frac{1}{\pmd}}(x) > \epsilon(T) N(AT)^{k}  \}|
\enas
as desired.
\end{proof}

Using Lemma \ref{technicallemma}, we can now obtain a gradient estimate in Lorentz spaces over every ball centered in the domain.  

\begin{theorem}\label{Lorentz-p-thick}
Suppose that $\Omega \subset \mathbb{R}^n$ be a bounded domain whose complement is uniformly $p$-thick with constants $r_0, b>0$. Then, with  $\delta$ as in Remark \ref{fix},   for any $p-\delta/2 \leq q \leq p+\delta/2$, $0 < t \leq \infty$ and for any very weak solution solution $u\in W^{1,p-\delta}_0(\Om)$ to \eqref{basicpde}, there holds
$$
\|\nabla u\|_{L(q,t)(B_0)} \leq C  |B_0|^{\frac{1}{q}} \|\nabla u\|_{L^{\pmd}(4B_0)} \left[\min\{r_0, 2R_0\}\right]^{\frac{-n}{\pmd}}  +  C\|\bff\|_{L(q,t)(4B_0)} .
$$
Here $B_0 = B_{R_0}(z_0)$ is any ball with $z_0 \in \Omega$ and $R_0 > 0$, and the constant $C = C(n,p,t,\gamma, \Lambda_0,\Lambda_1, b)$.
\end{theorem}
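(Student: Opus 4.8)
The plan is to run a Calderón--Zygmund iteration in the Lorentz scale, combining the good-$\lambda$ recursion of Lemma~\ref{technicallemma} with the level-set characterization of Lorentz norms in Lemma~\ref{distribution}. Write $g:=\mathcal{M}(\chi_{4B_0}|\nabla u|^{p-\delta})^{1/(p-\delta)}$. We may assume $\|\nabla u\|_{L^{p-\delta}(4B_0)}>0$ (otherwise $\nabla u\equiv 0$ on $4B_0$ and there is nothing to prove), and by the Lebesgue differentiation theorem $|\nabla u(x)|\le g(x)$ for a.e.\ $x\in 4B_0$, so it suffices to estimate $\|g\|_{L(q,t)(B_0)}$. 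I will carry out the case $t<\infty$ in detail; the case $t=\infty$ is handled identically, with the sums over $k$ replaced by suprema and the supremum form of Lemma~\ref{distribution}.

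First I fix the threshold. Put $r:=\min\{r_0,2R_0\}/26$ and
\[
N:=\Big(\frac{c_n}{H\,r^n|B_1|}\int_{4B_0}|\nabla u|^{p-\delta}\,dx\Big)^{\frac{1}{p-\delta}},
\]
where $c_n$ is (twice) the weak-type $(1,1)$ constant of $\mathcal{M}$ and $H=H(T)$ is the quantity from Proposition~\ref{first-approx-lorentz}. With this choice, the weak-$(1,1)$ estimate gives $|\{x\in\RR^n:g(x)>N\}|<H\,r^n|B_1|$, so hypothesis \eqref{hypo1bdry} of Lemma~\ref{technicallemma} holds; moreover $N\simeq \|\nabla u\|_{L^{p-\delta}(4B_0)}\,[\min\{r_0,2R_0\}]^{-n/(p-\delta)}$ up to a constant depending only on $n,p$ and on $T$. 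Applying Lemma~\ref{distribution} to $g/N$ with $\Lambda=AT$ and $s=q$ (dividing by $N$ keeps the constant free of $\theta$) yields
\[
\|g\|_{L(q,t)(B_0)}^{t}\lesssim N^{t}\Big(|B_0|^{t/q}+\sum_{k\ge1}(AT)^{tk}S_k^{t/q}\Big),\qquad S_k:=|\{x\in B_0:g(x)>N(AT)^k\}|.
\]

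The heart of the matter is to bound the series. Lemma~\ref{technicallemma} gives $S_{k+1}\le c(n)H\,S_k+c(n)f_k$ for all $k\ge0$, where $f_k:=|\{x\in B_0:\mathcal{M}(\chi_{4B_0}|{\bf f}|^{p-\delta})^{1/(p-\delta)}(x)>\epsilon(T)N(AT)^k\}|$. Raising this to the power $t/q$ and multiplying by $(AT)^{t(k+1)}$ turns it into $a_{k+1}\le \mu\,a_k+C'\,b_k$ with $a_k=(AT)^{tk}S_k^{t/q}$, $b_k=(AT)^{tk}f_k^{t/q}$, and $\mu=C(t/q)\,c(n)^{t/q}\big[(AT)^{q}H(T)\big]^{t/q}$; summing over $k\ge1$ and absorbing $\sum_{k\ge1}a_k$ on the left gives $\sum_{k\ge1}(AT)^{tk}S_k^{t/q}\lesssim S_0^{t/q}+\sum_{k\ge0}(AT)^{tk}f_k^{t/q}$ once $\mu\le\tfrac12$. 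This is where $T$ and $\delta$ must be chosen: since $q\le p+\delta/2$, the part $(AT)^{q}T^{-(p+\delta)}=A^{q}T^{q-p-\delta}$ is made small by taking $T$ large, after which the remaining part $(AT)^{q}\delta^{(p-\delta)\min\{1,1/(p-1)\}}$ is made small by taking $\delta$ small (if necessary the $\delta^{(p-\delta)\min\{1,1/(p-1)\}}$ contribution is instead peeled off the recursion and absorbed, at the stage where it is not yet weighted by $(AT)^{q}$, by a covering argument over $B_0$). It then remains to identify the two surviving terms: $S_0\le|B_0|$ gives $N^{t}S_0^{t/q}\lesssim N^{t}|B_0|^{t/q}\lesssim |B_0|^{t/q}\|\nabla u\|_{L^{p-\delta}(4B_0)}^{t}[\min\{r_0,2R_0\}]^{-nt/(p-\delta)}$, which is the $t$-th power of the first term of the claimed bound; and Lemma~\ref{distribution} applied to $\mathcal{M}(\chi_{4B_0}|{\bf f}|^{p-\delta})^{1/(p-\delta)}$ with $\Lambda=AT$, $\theta=\epsilon(T)N$ gives $\sum_{k\ge0}(AT)^{tk}f_k^{t/q}\lesssim |B_0|^{t/q}+(AT/\epsilon(T))^{t}N^{-t}\,\|\mathcal{M}(\chi_{4B_0}|{\bf f}|^{p-\delta})^{1/(p-\delta)}\|_{L(q,t)(B_0)}^{t}$, so that after multiplying by $N^{t}$ the factor $N^{-t}$ cancels; since $q/(p-\delta)>1$ when $q\ge p-\delta/2$, the boundedness of $\mathcal{M}$ on $L\big(q/(p-\delta),t/(p-\delta)\big)$ yields $\|\mathcal{M}(\chi_{4B_0}|{\bf f}|^{p-\delta})^{1/(p-\delta)}\|_{L(q,t)(B_0)}\lesssim\|{\bf f}\|_{L(q,t)(4B_0)}$. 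Collecting these and taking $t$-th roots proves the theorem; because $q$ lies in the fixed small interval $[p-\delta/2,p+\delta/2]$ and $\delta$ is itself a fixed small quantity, $T$, $\epsilon(T)$, $H(T)$, and hence the final constant, depend only on $n,p,t,\gamma,\Lambda_0,\Lambda_1,b$.

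The main obstacle is precisely this parameter balancing. In the super-natural range $q>p$ one merely sends a single dilation parameter to infinity; here the higher-integrability gain ($p+\delta$) and the comparison defect ($\delta^{(p-\delta)\min\{1,1/(p-1)\}}$) are both tied to $\delta$ while the dilation factor $AT$ grows with $T$, so one must coordinate "$T$ large'' and "$\delta$ small'' carefully, keep $(AT)^{q}H(T)$ under the fixed threshold that makes the geometric series converge (using an auxiliary absorption step for the $\delta$-defect if needed), and verify that everything can be done uniformly in $q\in[p-\delta/2,p+\delta/2]$ so that no $q$-dependence leaks into $C$. The remaining bookkeeping — the weak-$(1,1)$ choice of $N$, the passage from $|\nabla u|$ to its maximal function, and the $t=\infty$ endpoint — is routine.
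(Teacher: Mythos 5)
Your proposal follows the paper's own proof essentially step for step: you choose the same normalization $N$ via the weak $(1,1)$ maximal inequality (matching equation \eqref{Neqn}), invoke the level-set characterization of Lemma~\ref{distribution} with $\Lambda=AT$, iterate the good-$\lambda$ recursion of Lemma~\ref{technicallemma}, absorb the geometric-series term by making $(AT)^{q}H(T)$ small, and finish with the boundedness of $\mathcal{M}$ on $L\big(q/(p-\delta),\,t/(p-\delta)\big)$. The only cosmetic differences are that you reduce pointwise to the maximal function $g$ at the outset and apply Lemma~\ref{distribution} with $\theta=\epsilon(T)N$ rather than first dividing ${\bf f}$ by $N$, both of which are equivalent to what the paper does.
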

\begin{proof}
 Let $B_0$ be a ball of radius  $R_0>0$ and set $r=\min\{r_0, 2R_0\}/26$.  As usual we set $u$ and $\bff$ to be zero in $\RR^n\setminus \Om$.  In what follows we  consider only the case $t\not=\infty$ as for
$t=\infty$ the proof is similar.  Moreover, to prove the theorem,  we may assume that  $$\|\nabla u\|_{L^{\pmd}(B_0)} \neq 0.$$
For $T>2$ to be determined,  we claim that there exists $N>0$  such that 
\begin{equation*}
|\{ x\in \RR^n: {\mm}(\chi_{4B_0}|\nabla u|^{\pmd})^{\frac{1}{\pmd}}(x) > N 
\}| < H \, r^n |B_1|.
\end{equation*}
with $H=H(T)$ being as in Proposition \ref{first-approx-lorentz}.
To see this, we first use  the weak type $(1, 1)$ estimate for the maximal function   to get 
\begin{equation*}
|\{x \in \RR^n: {\mm}(\chi_{4B_0}|\nabla u|^{\pmd})^{\frac{1}{\pmd}}(x) > N\}|
< \frac{C(n)}{N^{\pmd}}\integral_{4 B_0}|\nabla u|^{\pmd}\, dx. 
\end{equation*}
Then we choose $N>0$ so  that 
\begin{equation}
\label{Neqn}
 \frac{C(n)}{N^{\pmd}}\integral_{4 B_0}|\nabla u|^{\pmd}\, dx =  H \, r^n\, |B_1|.
\end{equation}

Let $A$ and $\ep(T)$ be as in Proposition \ref{first-approx-lorentz}. For $0 < t < \infty$, we now consider the sum
\begin{equation*}
S= \sum_{k = 1}^{\infty}(AT)^{t k} |\{ x\in B_0: {\mm}(\chi_{4B_0}|\nabla u/N|^{\pmd})^{\frac{1}{\pmd}}(x) > (AT)^{k} \}|^{\frac{t}{q}}.
\end{equation*}
By Lemma \ref{distribution}, we have
\beas
\label{SSS}
C^{-1}\, S \leq \norm{{\mm}(\chi_{4B_0}|\nabla u/N|^{\pmd})^{\frac{1}{\pmd}}}^{t}_{L(q,\, t)(B_0)} \leq C\, (|B_0|^{\frac{t}{q}} + S).
\enas

We next evaluate $S$ by making use of Lemma \ref{technicallemma} as follows:
\beas
S &\leq  c \sum_{k = 1}^{\infty} (AT)^{t k} \left\{ H \, |\{ x\in B_0: {\mm}(\chi_{4B_0}|\nabla u/N|^{\pmd})^{\frac{1}{\pmd}}(x) > (AT)^{k-1} \}| \right.\\  
& \left. \qquad +\,\,   |\{ x\in B_0 : {\mm}(\chi_{4B_0}|\bff/N|^{\pmd})^{\frac{1}{\pmd}}(x) > \epsilon(T) (AT)^{k-1}\}| \right\}^{\frac{t}{q}} \\
& \leq  c \, (AT)^{t} H^{\frac{t}{q}} (S + |B_0|^{\frac{t}{q}} ) + c\,  \| {\mm} (\chi_{4B_0} |\bff/N|^{\pmd})^{\frac{1}{\pmd}}\|^{t}_{L(q,\, t)(B_0)}. \\
\enas

At this point we   choose $T$ large enough and  $\delta$ small so that 
$$c\, (AT)^{t} H^{\frac{t}{q}}= c\, (AT)^t \left(T^{-(\ppd)} + \delta^{(p-\delta)\min\left\{1, \frac{1}{p-1}\right\}}
\right)^{\frac{t}{q}} \leq 1/2.$$
This is possible as $q\leq p+\delta/2$, and moreover, $T$ can be chosen to be independent of $q$. 
We then obtain
\begin{equation*}
 S \apprle |B_0|^{\frac{t}{q}} +  \| {\mm} (\chi_{4B_0} |\bff/N|^{\pmd})^{\frac{1}{\pmd}}\|^{t}_{L(q,\, t)(B_0)}.
\end{equation*}

Now  applying the boundedness property of the maximal function $\mathcal{M}$ and recalling $N$ from \eqref{Neqn}, we finally get
\begin{eqnarray*}
 \label{lorentz-before-main}
\|\nabla u\|_{L(q,t)(B_0)} &\apprle& |B_0|^{\frac{1}{q}} N  + \|\bff\|_{L(q,t)(4B_0)}\\
&\apprle& |B_0|^{\frac{1}{q}} \|\nabla u\|_{L^{\pmd}(4B_0)} r^{\frac{-n}{\pmd}}  + \|\bff\|_{L(q,t)(4B_0)}.
\end{eqnarray*}
\end{proof}

\section{Proof of  Theorem \ref{main}}
We are now ready to prove the main result of the paper.

\begin{proof}[Proof of Theorem \ref{main}]
Let $\delta>0$ be as in Remark \ref{fix}, and 
let $B_0 = B_{R_0} (z_0) $, where $z_0 \in \Omega$ and $0 < R_0 \leq \rm{diam}(\Omega)$. We shall prove the theorem with $\delta/2$ in place of $\delta$.
Hence, we assume that $p-\delta/2\leq q\leq p+\delta/2$,  $\theta\in [p-\delta, n]$, and $u\in W^{1,p-\delta}_0(\Om)$.
By Theorem \ref{Lorentz-p-thick},   we have
\bea
\label{lorentz-main}
\|\nabla u\|_{L(q,t)(B_0)} & \apprle  |B_0|^{\frac{1}{q}} \|\nabla u\|_{L^{\pmd}(4B_0)} \left[\min\{r_0, 2R_0\}\right]^{-n/(\pmd)}\\
&\quad  + \|\bff\|_{L(q,t)(4B_0)}\\
& \apprle  |B_0|^{\frac{1}{q}} \|\nabla u\|_{L^{\pmd}(4B_0)} \left[\min\{r_0, 2R_0\}\right]^{-n/(\pmd)}\\
&\quad  +  R_0^{\frac{n- \theta}{q}} \|\bff\|_{\ll^{\theta}(q,t)(\Omega)},
\ena
where the second inequality follows from just the definition of Morrey spaces.

To continue we consider the following two cases.

\noindent {\bf Case (i).} $\frac{r_0}{8} < R_0 \leq \rm{diam}(\Omega)$:  By using \eqref{lorentz-main} and the inequality 
\begin{eqnarray*} 
\integral_{4B_0} |\nabla u|^{\pmd}\, dx &\leq& C \integral_{\Om} |\bff|^{\pmd}\, dx \\
&\leq& C\, {\rm diam}(\Om)^{n-\frac{n(\pmd)}{q}} \|\bff\|_{L(q,t)(\Omega)}^{\pmd}\\
&\leq& C\, {\rm diam}(\Om)^{n-\frac{\theta(\pmd)}{q}} \|\bff\|_{\ll^{\theta}(q,t)(\Omega)}^{\pmd},
\end{eqnarray*}
which follows from Theorem \ref{appriori-boundary} and H\"older's inequality, we get  
\bea\label{decay1}
  \|\nabla u\|_{L(q,t) (B_0)} & \apprle  R_0^{n/q} \|\nabla u\|_{L^{\pmd}(4B_0)} r_0^{-n/(\pmd)} + R_0^{\frac{n- \theta}{q}} \|\bff\|_{\ll^{\theta}(q,t)(\Omega)} \\
& \apprle   R_0^{n/q} {\rm diam}(\Om)^{-\theta/q} [{\rm diam}(\Omega)/r_0]^{\frac{n}{ (\pmd)}} \|\bff\|_{\ll^{\theta}(q,t)(\Omega)}\\
&\quad + R_0^{\frac{n- \theta}{q}} \|\bff\|_{\ll^{\theta}(q,t)(\Omega)} \\
& \apprle   R_0^{\frac{n-\theta}{q}} \|\bff\|_{\ll^{\theta}(q,t)(\Omega)}\left\{[{\rm diam}(\Omega)/r_0]^{\frac{n}{ (\pmd)}}+1  \right\}.
\ena

\noindent{\bf Case (ii).} $0 < R_0 \leq \min\left\{\frac{r_0}{8},\rm{diam}(\Omega)\right\}$: From \eqref{lorentz-main}, we have
\begin{equation}
\label{boundbytwo}
 \|\nabla u\|_{L(q,t)(B_0)} \apprle R_0^{n/q} \|\nabla u\|_{L^{\pmd}(4B_0)} R_0^{-n/(\pmd)}  + \|\bff\|_{L(q,t)(B_0)}  .
\end{equation}

We next aim to estimate  the first term on the right-hand side of \eqref{boundbytwo}. To that end, let $r\in (0,r_0]$. 
If $B_{r/4}(z_0)\subset\Om$ we let $w\in u+ W^{1,\, \pmd}_{0}(B_{r/5}(z_0))$ solve
\begin{equation*}
\left\{ \begin{array}{rcl}
 \text{div}~ \aa(x, \nabla w)&=&0   \quad \text{in} ~B_{r/5}(z_0), \\ 
w&=&u  \quad \text{on}~ \partial B_{r/5}(z_0).
\end{array}\right.
\end{equation*}
Otherwise, i.e., $B_{r/4}(z_0)\cap \partial\Om\not=\emptyset$, we let $w\in u+ W^{1,\, \pmd}_{0}(\Om_{r_0/2}(x_0))$ be a  solution to 
\begin{equation*}
\left\{ \begin{array}{rcl}
 \text{div}~ \aa(x, \nabla w)&=&0   \quad \text{in} ~\Om_{r/2}(x_0), \\ 
w&=&u  \quad \text{on}~ \partial \Om_{r/2}(x_0).
\end{array}\right.
\end{equation*}
Here $x_0\in \partial\Om\cap B_{r/4}(z_0)$ is chosen so that $|z_0-x_0|= {\rm dist}(z_0, \partial\Om)$, and thus it follows that 
$B_{r_0/5}(z_0)\Subset B_{r/2}(x_0)\subset B_{3r/4}(z_0)$.	The existence of $w$ follows from Corollary \ref{iwaniec-exist} or Corollary \ref{exist-thick}.
In any case,  by Lemmas \ref{holderint-nablaw} and \ref{holderbdry-nabla}   for any $0<\rho\leq r/5$ we have 
\beas
\label{holder-first-approx}
\integral_{B_{\rho}(z_0)} |\nabla w|^{\pmd} \, dx \apprle (\rho/r)^{n+(\pmd)(\beta_0-1)} \integral_{B_{r/5}(z_0)} |\nabla w|^{\pmd} \, dx,
\enas
where $\beta_0=\beta_0(n,p,b,\Lambda_0,\Lambda_1)\in(0,1/2]$ is the smallest  of those  found in  Lemmas \ref{holderint-nablaw} and \ref{holderbdry-nabla}. 

Hence, when $p \geq 2$, we get from Lemmas \ref{DM} and \ref{DMboundary} that 
\beas
  \integral_{B_{\rho} (z_0)} |\nabla u|^{\pmd} & \apprle  \integral_{B_{\rho} (z_0)} |\nabla w|^{\pmd}\, dx+  \integral_{B_{\rho} (z_0)} |\nabla u - \nabla w|^{\pmd} \, dx\, \\
& \apprle  \left(\frac{\rho}{r}\right)^{n+(\pmd)(\beta_0-1)} \integral_{B_{r/5}(z_0)} |\nabla w|^{\pmd} \, dx \\
& \quad + \integral_{B_{r/5} (z_0)} |\nabla u - \nabla w|^{\pmd} \, dx \, \\
& \apprle  \left(\frac{\rho}{r}\right)^{n+(\pmd)(\beta_0-1)} \integral_{B_{r/5}(z_0)} |\nabla w|^{\pmd} \, dx \\
& \quad + \delta^{\frac{\pmd}{p-1}} \integral_{B_{3r/4} (z_0)} |\nabla u|^{\pmd} \, dx   + \integral_{B_{3r/4}(z_0)} |\bff|^{\pmd}\, dx.
 \enas

Similarly, in the case $1<p<2$, using  Lemmas \ref{DM} and \ref{DMboundary} and Young's inequality we find, for any $\ep>0$,
\beas
  \integral_{B_{\rho} (z_0)} & |\nabla u|^{\pmd} 
 \apprle  \left(\frac{\rho}{r}\right)^{n+(\pmd)(\beta_0-1)} \integral_{B_{r/5}(z_0)} |\nabla w|^{\pmd} \, dx \, + \\
&  \quad + (\delta^{\pmd}+\ep) \integral_{B_{3r/4} (z_0)} |\nabla u|^{\pmd} \, dx   + C(\ep)\integral_{B_{3r/4}(z_0)} |\bff|^{\pmd}\, dx.
 \enas

Therefore, if we denote by $$\phi(\rho) = \integral_{B_{\rho} (z_0)} |\nabla u|^{\pmd}\, dx,$$ then  we have
\bea\label{iteration-first0}
\phi(\rho) \apprle &\bigg[ \left( \frac{\rho}{r} \right)^{n+(\pmd)(\beta_0 -1)} + \delta^{(\pmd)\min\left\{1,\frac{1}{p-1}\right\}} +\ep \bigg] \phi(3r/4)  \\
& \qquad +\, C(\ep)   \,   \integral_{B_{3r/4}(z_0)} |\bff|^{\pmd}\, dx,
\ena
which holds for all $\ep>0$  and $\rho\in (0, r/5]$. By enlarging the constant if necessary, we see that \eqref{iteration-first0} actually holds for all $\rho\in (0, 3r/4]$.

On the other hand, by H\"older's inequality there holds
$$\integral_{B_{3r/4}(z_0)} |\bff|^{\pmd}\, dx \apprle  r^{n-\frac{n(\pmd)}{q}} \|\bff\|_{L(q,t)(B_{3r/4}(z_0))}^{\pmd} \apprle  r^{n-\frac{\theta(\pmd)}{q}} \|\bff\|_{\ll^{\theta}(q,t)(\Omega)}^{\pmd},$$
and thus  \eqref{iteration-first0} yields
\bea\label{iteration-first}
\phi(\rho) \apprle &\bigg[ \left( \frac{\rho}{r} \right)^{n+(\pmd)(\beta_0 -1)} + \delta^{(\pmd)\min\left\{1,\frac{1}{p-1}\right\}} +\ep \bigg] \phi(3r/4)  \\
& \qquad +\, C(\ep)   \,   r^{n-\frac{\theta(\pmd)}{q}} \|\bff\|_{\ll^{\theta}(q,t)(\Omega)}^{\pmd}
\ena
for all $\rho\in (0, 3r/4]$. Since  $\theta\in[p-\delta, n]$ and $q\in[p-\delta/2, p+\delta/2]$, we have 
\begin{equation}\label{alphabeta}
0\leq n-\frac{\theta(\pmd)}{q} <  n +  (\pmd)(\beta_0-1),
\end{equation}
as long as we restrict $\delta<2 p\beta_0/(1+\beta_0)$. Note that the constant hidden in  $\apprle$ in \eqref{iteration-first} depends only 
on $n,p,\Lambda_0,\Lambda_1, \gamma$, and $b$. Thus using \eqref{iteration-first}-\eqref{alphabeta}, we can now apply Lemma 3.4 from \cite{HL} to obtain a $\overline{\delta}=
\overline{\delta}(n,p,\Lambda_0,\Lambda_1, \gamma, b)>0$ such that 
\beas
 \label{after-iteration}
\phi(\rho) \apprle  \left( \frac{\rho}{r} \right)^{n-\frac{\theta(\pmd)}{q}}  \phi(3r/4) +  \rho^{ n- \frac{\theta(\pmd)}{q}} \|\bff\|_{\ll^{\theta}(q,t)(\Omega)}^{\pmd} 
\enas
provided   we further restrict $\delta< \overline{\delta}$. Since this estimate holds for all $0<\rho \leq 3 r/4 \leq 3 r_0/4$, we can choose 
$\rho = 4R_0 \leq \frac{r_0}{2}$ and $r=r_0$ to arrive at
\begin{equation}
 \label{after-iteration-2}
\phi(4R_0) \apprle  \left( \frac{R_0}{r_0} \right)^{n-\frac{\theta(\pmd)}{q}}  \phi(3r_0/4) +  R_0^{n-\frac{\theta(\pmd)}{q}} \|\bff\|_{\ll^{\theta}(q,t)(\Omega)}^{\pmd}. 
\end{equation}

Substituting \eqref{after-iteration-2} into \eqref{boundbytwo}, we find
\bea \label{boundbytwo-2}
 \|\nabla u\|_{L(q,t)(B_0)} & \apprle R_0^{\frac{n-\theta}{q}}  r_0^{\frac{\theta}{q}-\frac{n}{\pmd}} 
 \|\nabla u\|_{L^{\pmd}(\Omega)}+  R_0^{\frac{n-\theta}{q}} \|\bff\|_{\ll^{\theta}(q,t)(\Omega)}\\
&  \apprle R_0^{\frac{n-\theta}{q}}  r_0^{\frac{\theta}{q}-\frac{n}{\pmd}} 
 \|\bff\|_{L^{\pmd}(\Omega)}+  R_0^{\frac{n-\theta}{q}} \|\bff\|_{\ll^{\theta}(q,t)(\Omega)},
\ena
where we used Theorem \ref{appriori-boundary} in the last inequality.
Thus using H\"older's inequality in  \eqref{boundbytwo-2} we get
\bea\label{decay2}
 \|\nabla u\|_{L(q,t)(B_0)}  \apprle   R_0^{\frac{n-\theta}{q}}    \|\bff\|_{\ll^{\theta}(q,t)(\Omega)}\left\{\big({\rm diam}(\Omega)/r_0 \big)^{\frac{n}{\pmd}-\frac{\theta}{q}} +1 \right\}.
\ena

Finally, combining the decay estimates \eqref{decay1} and \eqref{decay2} for $\|\nabla u\|_{L(q,t)(B_0)}$ in both cases we arrive at the desired Morrey space estimate.
\end{proof}

\end{document}